\providecommand{\noopsort[1]{}}
\numberwithin{equation}{section}
\setlist{leftmargin=*}
\setlist[1]{labelindent=1.2\parindent}
\newtheorem{thm}{Theorem}[section]
\newtheorem*{thm*}{Theorem}
\newtheorem{cor}[thm]{Corollary}
\newtheorem{prop}[thm]{Proposition} 
\newtheorem{lem}[thm]{Lemma}
\theoremstyle{remark}
\newtheorem{rem}[thm]{Remark}
\newtheorem{example}[thm]{Example}
\theoremstyle{definition}
\newtheorem{defn}[thm]{Definition}
\newtheorem{hyp}[thm]{Hypothesis}        
\newtheorem*{defn*}{Definition}
\newcommand{\coloneqq}{\mathrel{\mathop:}=}
\renewcommand{\Re}{{\rm Re}\,}
\newcommand{\eps}{\varepsilon}
\newcommand{\tr}{\operatorname{tr}}
\newcommand{\supp}{\operatorname{supp}}
\newcommand{\one}{\mathds{1}}
\newcommand{\CR}{\mathds{R}}
\newcommand{\CC}{\mathds{C}}
\newcommand{\CN}{\mathds{N}}
\newcommand{\cM}{\mathscr{M}}
\newcommand{\cL}{\mathscr{L}}
\newcommand{\cA}{\mathscr{A}}
\newcommand{\la}{\langle}
\newcommand{\ra}{\rangle}
\newcommand{\dx}{\mathrm{d}}
\global\long\def\diff#1#2{\,#1\mathrm{d}#2}
\global\long\def\tr{\operatorname{tr}}
\global\long\def\dual#1{\langle#1\rangle}
\begin{document}
\title{Diffusion with nonlocal Robin boundary conditions}
\author{Wolfgang Arendt}
\email{wolfgang.arendt@uni-ulm.de}
\address{Institute of Applied Analysis, Ulm University, 89069 Ulm, Germany}

\author{Stefan Kunkel}
\email{stefan.kunkel@uni-ulm.de}
\address{Graduiertenkolleg 1100, Ulm University, 89069 Ulm, Germany}
\author{Markus Kunze}
\email{markus.kunze@uni-konstanz.de}
\address{Universit\"at Konstanz, Fachbereich Mathematik und Statistik, 78467 Konstanz, Germany}

\begin{abstract}
We investigate a second order elliptic differential operator $A_{\beta, \mu}$
on a bounded, open set $\Omega\subset\CR^{d}$ with Lipschitz boundary
subject to a nonlocal boundary condition of Robin type. More precisely
we have $0\leq \beta\in L^{\infty}(\partial\Omega)$ and $\mu\colon\partial\Omega\to\cM(\overline{\Omega})$,
and boundary conditions of the form
\[
\partial_{\nu}^{\cA}u(z)+\beta(z)u(z)=\int_{\overline{\Omega}}u(x)\diff{\mu(z)(}{x)},\ z\in\partial\Omega,
\]
where $\partial_{\nu}^{\cA}$ denotes the weak conormal derivative with respect to our differential operator.
Under suitable conditions on the coefficients of the differential
operator and the function $\mu$ we show that $A_{\beta, \mu}$
generates a holomorphic semigroup $T_{\beta,\mu}$ on $L^{\infty}(\Omega)$ which enjoys the strong Feller property. In particular, it  
takes values in $C(\overline{\Omega})$. Its restriction to $C(\overline{\Omega})$ is strongly continuous and holomorphic.
We also establish positivity and contractivity of the semigroup under additional assumptions and study the
asymptotic behavior of the semigroup.
\end{abstract}

\subjclass[2010]{47D07, 60J35, 35B35}
\keywords{Diffusion process, non-local boundary condition, stability}

\maketitle

\section{Introduction}

In the 1950s Feller \cite{feller-semigroup, feller-diffusion, feller57} described all diffusion processes in one dimension; in particular, he characterized the boundary conditions which lead to generators of what today is called a \emph{Feller semigroup}. Besides the classical Dirichlet, Neumann and Robin boundary conditions, also certain nonlocal boundary conditions can occur. In higher dimensions, it was Ventsel' \cite{wenzell} who first described the boundary conditions satisfied by the functions in the domain of the generator of a Feller semigroup. Naturally, the converse question of which of these boundary conditions actually lead to generators of Feller semigroups
has recieved a lot of attention. The starting point for that question is the article by Sato and Ueno \cite{su65}, who poved
that this is the case if and only if a certain auxiliary problem (which is a generalization of the Dirichlet problem, involving the 
boundary condition in question; cf.\ Equation \eqref{eq.poisson} below) is solvable for sufficiently many right-hand sides. Some concrete examples of boundary
conditions for which one obtains a generator of a Feller semigroup were already contained in \cite{su65, wenzell}; more refined results 
were obtained by Taira, see \cite{taira} and the references therein, Skubachevski{\u\i} \cite{sk89, sk95} and Galakhov and
Skubachevski{\u\i} \cite{galaskub}.\medskip 

In this article, we are concerned with diffusion equations with certain non-local boundary conditions of Robin type. 
Let us describe this in 
more detail. We consider a bounded, open set $\Omega\subset\CR^d$ with Lipschitz boundary. As far as our boundary condition is concerned, we make the following assumptions.

\begin{hyp}\label{h.mu}
We are given a real-valued function $0 \leq \beta \in L^\infty(\partial\Omega)$, where $\partial\Omega$ is endowed with
surface measure $\sigma$. Moreover, we are given a map $\mu : \partial \Omega \to \mathscr{M}(\overline{\Omega})$, the space of complex-valued measures on $\overline{\Omega}$, which satisfies the following  conditions.
\begin{enumerate}
[(a)]
\item For every function $f \in B_b(\overline{\Omega})$, the space of all bounded and Borel measurable functions on 
$\overline{\Omega}$, the map $z\mapsto \langle f, \mu (z)\rangle \coloneqq \int_{\overline{\Omega}} f(x)\mu(z)(\mathrm{d}x)$ is 
measurable;
\item for some $p>d-1$ with $p\geq 2$ we have $\int_{\partial{\Omega}} \|\mu (z)\|^p\, \dx\sigma (z)<\infty$ and
\item there exists a positive and bounded measure $\tau$ on $\overline{\Omega}$ such that for every $z\in \partial \Omega$
the measure $\mu (z)$ is absolutely continuous with respect to $\tau$.
\end{enumerate}
\end{hyp}

In (a), it actually suffices to assume that the map $z\mapsto \langle f, \mu (z)\rangle$ is measurable for all $f \in C(\overline{\Omega})$. The measurability for those $f$ which are merely bounded and measurable follows by a monotone class argument, cf.\ the proof of
Lemma 6.1 in \cite{k11}.
We will see later on that if instead of (a) we assume
\begin{enumerate}[(a$^\prime$)]
\item For every $f \in B_b(\overline{\Omega})$ the map $z \mapsto \langle f, \mu (z)\rangle$ is continuous
\end{enumerate}
then parts (b) and (c) in Hypothesis \ref{h.mu} are automatically satisfied.

Assuming Hypothesis \ref{h.mu} we can define the operator $\Delta_{\beta, \mu}$ on $L^\infty(\Omega)$ by
\begin{align*}
	D(\Delta_{\beta, \mu}) &:=\{u\in H^1(\Omega)\cap C(\overline{\Omega}):\ \Delta u\in L^\infty(\Omega),\\
	&\qquad\qquad\partial_\nu u(z)+\beta(z) u(z)=\dual{u, \mu(z)}\,\, \forall\, z\in \partial \Omega\}\\
	\Delta_{\beta, \mu} u&=\Delta u.
\end{align*}
Here $H^1(\Omega)$ is the usual Sobolev space and the \emph{normal derivative} $\partial_\nu u$ has to be understood as follows.

\begin{defn}
	For a function $u\in H^1 (\Omega)$, we write $\tr u$ for its trace in $L^2(\partial \Omega)$. Let $u \in H^1(\Omega)$ be
	such that $\Delta u\in L^2(\Omega)$ and let $h\in L^2(\partial\Omega)$. We say that $\partial_\nu u=h$ if Green's formula
	\[
		\int_{\Omega}\Delta uv \dx x+\int_{\Omega}\nabla u \nabla v \dx x= \int_{\partial\Omega} h\tr v \dx \sigma
	\]
	holds for all $v\in H^1(\Omega)$.
\end{defn}

In what follows we will not distinguish between a function $u\in H^1(\Omega)$ and its
trace $\tr u$ in integrals over the boundary $\partial \Omega$.

With this definition of the normal derivative the operator $\Delta_{\beta, \mu}$ is well-defined.  Indeed, if $u\in D(\Delta_{\beta, \mu})$
then $u\in C(\overline{\Omega})$ whence
\[
	h(z):=\dual{u,\mu(z)}-\beta(z)u(z)
\]
defines a function $h\in  L^2(\partial\Omega)$. Since furthermore
$u\in H^1(\Omega)$ and $\Delta u \in L^\infty(\Omega)\subset L^2(\Omega)$  it makes sense to say that $\partial_\nu u= h$. This condition is the \emph{Robin boundary condition} we are interested in with \emph{local} part $\beta \tr u$ and  \emph{non-local} part $\dual{u, \mu(\cdot)}$.

We also consider the part $\Delta_{\beta, \mu}^C$ of $\Delta_{\beta, \mu}$ in $C(\overline{\Omega})$ given by
\begin{align*}
	D(\Delta_{\beta, \mu}^C) &:=\{u\in H^1(\Omega)\cap C(\overline{\Omega}):\ \Delta u\in C(\overline\Omega),\\
	&\qquad\qquad\partial_\nu u+\beta u|_{\partial\Omega}=\dual{u,\mu(\cdot)}\}\\
	\Delta_{\beta, \mu}^C u=\Delta u.
\end{align*}
One of our main results is the  following generation theorem.

\begin{thm}\label{t.laplace}
Assuming Hypothesis \ref{h.mu}, the operator $\Delta_{\beta, \mu}$ generates a holomorphic semigroup $(T_{\beta,\mu}(t))_{t>0}$ on $L^\infty(\Omega)$ which satisfies the strong Feller property. In particular, this semigroup leaves the space $C(\overline{\Omega})$ invariant. Its restriction to $C(\overline{\Omega})$  is a strongly continuous and holomorphic semigroup whose generator is 
$\Delta_{\beta, \mu}^C$.
\end{thm}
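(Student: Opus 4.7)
The plan is to treat the nonlocal boundary condition as a perturbation of the classical (local) Robin Laplacian $\Delta_\beta$ obtained by taking $\mu\equiv 0$. As a first step I would settle the case $\mu=0$ via the form method: the closed sectorial form $a(u,v)=\int_\Omega \nabla u\cdot\overline{\nabla v}\,\dx x+\int_{\partial\Omega}\beta\,\tr u\,\overline{\tr v}\,\dx\sigma$ on $H^1(\Omega)$ generates a holomorphic $C_0$-semigroup on $L^2(\Omega)$. Standard Robin heat-kernel bounds on Lipschitz domains (De~Giorgi--Nash--Moser plus ultracontractivity) give extrapolation of this semigroup to a holomorphic semigroup $T_\beta$ on $L^\infty(\Omega)$ with the strong Feller property, and in particular $R(\lambda,\Delta_\beta)$ maps $L^\infty(\Omega)$ into $C(\overline\Omega)$ with the expected sectorial resolvent estimate.

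Next I would introduce the Robin--Poisson operator: for $g\in L^p(\partial\Omega)$ with $p>d-1$, let $N_\lambda g$ be the unique weak solution $u\in H^1(\Omega)$ of $\lambda u-\Delta u=0$ in $\Omega$, $\partial_\nu u+\beta u=g$ on $\partial\Omega$. A combination of trace theory, elliptic regularity on Lipschitz domains, and Sobolev embedding on the $(d-1)$-dimensional boundary yields that $N_\lambda$ is bounded from $L^p(\partial\Omega)$ into $C(\overline\Omega)$, with operator norm tending to $0$ as $|\lambda|\to\infty$ inside a sector $\Sigma_\theta$ around the positive real axis. Hypothesis~\ref{h.mu}(a)--(b) implies that the nonlocal map $\Phi u(z):=\langle u,\mu(z)\rangle$ is bounded from $B_b(\overline\Omega)$ to $L^p(\partial\Omega)$.

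The heart of the argument is then to rewrite the abstract resolvent equation $(\lambda-\Delta_{\beta,\mu})u=f$ as the fixed-point equation
\[
u \;=\; R(\lambda,\Delta_\beta)f \;+\; N_\lambda\Phi u .
\]
For $\lambda$ sufficiently large inside $\Sigma_\theta$, the composition $N_\lambda\Phi$ is a strict contraction on $L^\infty(\Omega)$ taking values in $C(\overline\Omega)$, so $I-N_\lambda\Phi$ is invertible by a Neumann series. This produces a bounded resolvent $R(\lambda,\Delta_{\beta,\mu})$ satisfying the sectorial estimate $\|R(\lambda,\Delta_{\beta,\mu})\|\le M/|\lambda|$ on a suitable sector, which yields the holomorphic semigroup on $L^\infty(\Omega)$ by the standard characterization. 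The strong Feller property of $T_{\beta,\mu}$ is inherited term by term from the Neumann expansion, since both $R(\lambda,\Delta_\beta)$ and $N_\lambda$ send their natural input spaces into $C(\overline\Omega)$. Denseness of the domain of the part in $C(\overline\Omega)$ plus the sectorial estimate give strong continuity and holomorphy of the restricted semigroup, while comparing domains shows that this restriction is generated precisely by $\Delta_{\beta,\mu}^C$.

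The step I expect to be the main obstacle is the regularity assertion that $N_\lambda$ maps $L^p(\partial\Omega)$ continuously into $C(\overline\Omega)$ with the operator norm decaying in $\lambda$: this is where the condition $p>d-1$ of Hypothesis~\ref{h.mu}(b) is used in an essential way, as it is precisely the threshold above which Robin data on a $(d-1)$-dimensional Lipschitz boundary produce continuous (rather than merely $L^\infty$) solutions of the homogeneous elliptic equation. Once this mapping property and its decay are established, the rest of the proof is a fairly routine boundary-perturbation and Neumann-series scheme.
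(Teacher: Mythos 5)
Your overall architecture---the Robin--Poisson operator $N_\lambda$, the fixed-point identity $u=R(\lambda,\Delta_\beta)f+N_\lambda\Phi u$, and the Neumann series for $(I-N_\lambda\Phi)^{-1}$---is exactly the Greiner-type boundary perturbation scheme the paper uses, and your treatment of the local case $\mu=0$ (forms, Gaussian estimates, extrapolation to $L^\infty(\Omega)$, strong Feller) coincides with Theorem \ref{thm:local-gen-res}. The gap is the step you yourself flag as the main obstacle: the claim that $\|N_\lambda\|_{\mathscr{L}(L^p(\partial\Omega),C(\overline{\Omega}))}\to 0$ as $|\lambda|\to\infty$ in a sector. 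This is asserted, not proved, and it does not follow from ``trace theory, elliptic regularity and Sobolev embedding'': the regularity result available on Lipschitz domains (Proposition \ref{prop:robin-reg}, taken from Nittka) gives $\|N_\lambda h\|_{C^\gamma(\overline{\Omega})}\leq C\,\|h\|_{L^{p}(\partial\Omega)}$ for a \emph{fixed} $\lambda>\omega$, with no control of $C$ in $\lambda$, and a quantitative decay rate of the $L^p(\partial\Omega)\to C(\overline{\Omega})$ norm is precisely the kind of estimate that is not established at this level of generality. Without it the contraction argument, and hence the Neumann series and the resolvent bound, have no foundation.

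The paper circumvents this difficulty: it proves only the strong convergence $S_\lambda h\to 0$ for each fixed $h$ (elementary from $S_\lambda h=S_{\mu_0}h-(\lambda-\mu_0)R(\lambda,A_0)S_{\mu_0}h$ and $\lambda R(\lambda,A_0)f\to f$ on $\overline{D(A_0)}=C(\overline{\Omega})$), and then upgrades this to $\|S_\lambda\Phi\|_{\mathscr{L}(C(\overline{\Omega}))}\to 0$ using that $\Phi$ is a \emph{compact} operator from $C(\overline{\Omega})$ to $L^p(\partial\Omega)$. That compactness is exactly where Hypothesis \ref{h.mu}(c) (the dominating measure $\tau$) enters, via Banach--Alaoglu in $L^\infty(\tau)$ together with the $p$-integrable majorant $\|\mu(\cdot)\|$ from Hypothesis \ref{h.mu}(b); your proposal never uses (c) and never establishes compactness of $\Phi$, which is a symptom of the missing ingredient. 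If you either prove the quantitative decay of $N_\lambda$ (a right half-plane $\Re\lambda>\rho$ suffices; a full sector is not needed for the generation criterion) or replace it by the compactness argument, the remaining steps---the formula $R(\lambda,\Delta_{\beta,\mu})=(I-S_\lambda\Phi)^{-1}R(\lambda,\Delta_\beta)$, the transfer of the strong Feller property (where you should also verify pointwise convergence along bounded a.e.\ convergent sequences, as required by Definition \ref{def.sf}(b)), and the identification of the generator of the restriction to $C(\overline{\Omega})$---go through essentially as in the paper.
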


We refer to Section \ref{s.prelim} for the definition of holomorphic semigroups which are not strongly continuous at $0$ and for an explanation of the strong Feller property. We will actually prove Theorem \ref{t.laplace} in more generality, replacing the Laplacian with a general second order strictly elliptic differential operator with measurable coefficients.

We will also establish positivity and contractivity of the semigroup $T_{\beta, \mu}$ under additional assumptions on 
$\beta$ and $\mu$, see Section \ref{s.non}. In the case of Theorem \ref{t.laplace}, where we consider the Laplacian, the conditions are as follows. 
If the measures $\mu(z)$ are positive for all $z\in \partial \Omega$ then the semigroup $T_{\beta,\mu}$ is \emph{positive}; i.e. each $T_{\beta,\mu}(t)$ leaves the positive cone $L^\infty(\Omega)_+$ of $L^\infty(\Omega)$ invariant. If additionally we have that
\begin{equation}
\label{eq.submarkov}
\mu (z, \overline{\Omega}) \leq \beta (z) \quad \mbox{for almost all } z \in \partial \Omega,
\end{equation}
then the semigroup $T_{\beta, \mu}$ is \emph{sub-Markovian}, i.e.\ $T_{\beta,\mu}$ is positive
and $T_{\beta,\mu}(t)\one \leq \one$ for all $t>0$. If equality holds in \eqref{eq.submarkov} then $T_{\beta,\mu}$ is \emph{Markovian}, i.e. $T_{\beta, \mu} (t)$ is positive and $T_{\beta, \mu}(t)\one = \one$. In these situations we will also study the asymptotic behavior of the semigroup $T_{\beta, \mu}$. In the sub-Markovian, non-Markovian case the semigroup converges in operator norm to 0, whereas in the Markovian case the orbits converge to an equilibrium.
\medskip

Let us compareour results to the existing literature. First of all, in this article 
we consider less restrictive assumptions on the coefficients and the domain. 
Indeed, in the above mentioned references, the domain and the coefficients of
the operator are assumed to be smooth (i.e.\ $C^\infty$ or a suitable H\"older continuity), whereas here we consider coefficients which are merely measurable and a domain with Lipschitz boundary. Moreover, we prove our generation result for general boundary conditions and study additional properties, such as positivity and the Markov property, afterwards, whereas in  \cite{sk89, sk95, galaskub, taira} there are a priori assumptions imposed on the coefficients in the boundary condition which ensure these properties. On the other hand, the quoted result treat more general boundary conditions which cover also, e.g., viscosity phenomena on the boundary.

Possibly the most important novelty in this article is that we obtain a holomorphic semigroup on $C(\overline{\Omega})$, even
on $L^\infty(\Omega)$. So far, holomorphic semigroups for diffusion processes with nonlocal boundary conditions were only
established on the $L^p$-scale ($1\leq p <\infty$), see \cite{taira_analytic}. 
To the best of our knowledge, the only other article which establishes 
holomorphy of the semigroup on $C(\overline{\Omega})$ for diffusion operators with non-local boundary conditions is our previous article \cite{akk16}, where we have treated non-local Dirichlet boundary conditions. We should note that the two problems are rather different. Indeed, the non-local Robin boundary condition 
considered here falls in the so-called `transversal case', where, due to the normal derivative, 
the non-local term has lower order than the 
rest of the boundary condition. This is not the case for the non-local Dirichlet boundary condition which falls in the so-called `non-transversal case'. Also the strategy for the proof is rather different. The proof of Theorem \ref{t.laplace} is based on a perturbation result 
by Greiner, which we explain in Section \ref{s.2}. We will actually present a slight generalization of Greiner's result which establishes 
additional properties of the perturbed semigroup. We should mention that Greiner's perturbation result cannot be used in the case
of non-local Dirichlet boundary conditions where the maximum principle plays an essential role. 

The holomorphy of the semigroup toghether with the compactness allows us to study the asymptotic behavior of the semigroup in Section 6.

Non-local Robin boundary conditions of the above form occur in several concrete situations, for example in heat control, where the heat is measured in the interior and the control is via the boundary, see \cite{bkl01, gjs08}.

The structure of this article is as follows. After some preliminaries in Section 2, we present Greiner's boundary perturbation, along 
with our modifications, in Section 3.  Section 4 contains results on elliptic differential operators with local Robin boundary conditions which are needed subsequently. In Section 5 we prove our main generation result. Section 6 contains our results concerning the asymptotic behavior of the semigroup and Section 7 is devoted to the special situation where all measures $\mu (z)$ are absolutely continuous with respect to Lebesgue measure. There we will see that our conditions for positivity and sub-Markovianity are necessary in this situation. The concluding Section 8  contains some examples where Hypothesis \ref{h.mu} is satisfied, in particular, we prove that it is satisfied whenever condition (a$^\prime$) is fulfilled. In the appendix we present some general results on the asymptotic behavior of positive semigroups, which we use in Section 6.

\section{Preliminaries}\label{s.prelim}

\subsection{Semigroups that are not necessarily strongly continuous} In this article, we shall consider semigroups 
on the space $L^\infty(\Omega)$, where $\Omega$ is a bounded open subset of $\CR^d$. By a result of Lotz \cite{lotz} (see 
also \cite[Corollary 4.3.19]{abhn}), a strongly continuous semigroup on $L^\infty(\Omega)$ necessarily has a bounded generator.
As we are concerned with second order differential operators, we will encounter semigroups that are not strongly continuous. 
Since this is not a standard situation, we recall the relevant definitions and results here. Let us start with the following definition, 
taken from \cite[Section 3.2]{abhn}.

\begin{defn}
Let $X$ be a Banach space. A \emph{semigroup} is a strongly continuous mapping $T: (0,\infty) \to \mathscr{L}(X)$
such that
\begin{enumerate}
[(a)]
\item $T(t+s)=T(t)T(s)$ for all $t,s>0$;
\item there exist constants $M>0$ and $\omega \in \CR$ such that $\|T(t)\|\leq Me^{\omega t}$
for all $t>0$;
\item if $T(t)x =0$ for all $t>0$, it follows that $x=0$.
\end{enumerate}
We say that $T$ is of \emph{type} $(M,\omega)$ to emphasize that $(b)$ holds with these constants. A semigroup of type
$(1,0)$ is called \emph{contraction semigroup}. If additionally we have 
\[
T(t)x \to x \quad\mbox{as}\quad t\to 0
\]
for all $x\in X$, then $T$ is called \emph{strongly continuous}.
\end{defn}

Clearly, the condition $T(t)x \to x$ as $t\to 0$ for every $x\in X$ implies condition (c) above and it is not difficult to see
that it also implies condition (b) (see \cite[I Proposition 5.5]{en}). Thus, our definition of strongly continuous semigroup
coincides with the classical definition used, e.g., in \cite[I, Definition 5.1]{en}. However, even without strong continuity, we 
can associate a generator with a semigroup. Indeed, if $T$ is a semigroup of type $(M,\omega)$, then there exists a unique
operator $G$ such that $(\omega, \infty)$ is contained in the resolvent set $\rho (G)$ of $G$ and
\[
R(\lambda, G)x = \int_0^\infty e^{-\lambda t}T(t)x\, dt
\]
for all $x\in X$ and $\lambda > \omega$, see \cite[Equation (3.13)]{abhn}. The operator $G$ is called the \emph{generator} of $T$. 
Note that in the case of strongly continuous semigroups this is equivalent to the usual `differential' definition of the generator, 
see \cite[II Theorem 1.10]{en}.\smallskip

A semigroup $T$ is called \emph{holomorphic}, if there is some angle $\theta \in (0,\frac{\pi}{2}]$ such that $T$ has a holomorphic
extension to the sector 
\[
\Sigma_\theta \coloneqq \big\{ re^{i\varphi} : r >0, |\varphi |<\theta\big\}
\]
which is bounded on $\Sigma_\theta \cap \{z\in \CC: |z|\leq 1\}$, see \cite[Definition 3.7.1]{abhn}.

The generators of holomorphic semigroups can be characterized as follows.

\begin{thm}\label{t.holgen}
An operator $G$ on $X$ generates a holomorphic semigroup if and only if there exists a constant $\omega \in \CR$ sucht 
that $\{\lambda \in \CC : \Re\lambda >\omega\} \subset \rho (G)$ and
\[
\sup_{\Re\lambda >\omega} \|\lambda R(\lambda, G)\| < \infty.
\]
\end{thm}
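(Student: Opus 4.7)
The plan is to prove the standard Hille--Yosida-type characterization of holomorphic semigroups, adapted to the present setting where strong continuity at zero is not required. Both directions rest on Laplace transform/Dunford--Cauchy contour representations.

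For the \emph{necessity} direction, assume $T$ extends holomorphically to a sector $\Sigma_\theta$ and is bounded on $\Sigma_\theta\cap\{|z|\leq 1\}$. Combining this with the type estimate $\|T(t)\|\leq Me^{\omega_0 t}$ and the semigroup law $T(z+w)=T(z)T(w)$, one shows that on each smaller sector $\Sigma_{\theta'}$ with $\theta'<\theta$, the estimate $\|T(z)\|\leq M'e^{\omega|z|}$ holds for some $\omega>\omega_0$ and $M'>0$. Starting from the Laplace representation
\[
R(\lambda,G)x=\int_0^\infty e^{-\lambda t}T(t)x\,dt,\qquad \Re\lambda>\omega,
\]
a Cauchy-theorem-type contour rotation of the integration path yields
\[
R(\lambda,G)x=\int_0^\infty e^{-\lambda e^{i\alpha}s}T(e^{i\alpha}s)x\,e^{i\alpha}\,ds
\]
for any $\alpha\in(-\theta',\theta')$, valid whenever $\Re(\lambda e^{i\alpha})>\omega$. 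This identity extends the resolvent to a sector of opening strictly larger than $\pi/2$, and choosing $\alpha$ optimally for each $\lambda$ delivers the bound $\sup_{\Re\lambda>\omega'}\|\lambda R(\lambda,G)\|<\infty$ on the desired half-plane.

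For the \emph{sufficiency} direction, the key step is to upgrade the hypothesized half-plane resolvent bound to a sectorial one. Around any base point $\lambda_0=\omega+1+i\eta$, the standard power series
\[
R(\lambda,G)=\sum_{n=0}^\infty (\lambda_0-\lambda)^n R(\lambda_0,G)^{n+1}
\]
converges in the disk of radius $1/\|R(\lambda_0,G)\|$ around $\lambda_0$; by the assumed half-plane bound this radius is at least $c|\eta|$ for some constant $c>0$ and all sufficiently large $|\eta|$. The union of these disks covers a translated sector $\omega'+\Sigma_{\pi/2+\delta}$ for some $\delta>0$, and summing the geometric series gives a uniform estimate $\|R(\lambda,G)\|\leq M''/|\lambda-\omega'|$ there. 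With this sectorial estimate in hand I would then define, for $t\in\Sigma_\delta$,
\[
T(t):=\frac{1}{2\pi i}\int_\Gamma e^{\lambda t}R(\lambda,G)\,d\lambda,
\]
where $\Gamma$ is the upward-oriented boundary of a slightly smaller translated sector. Absolute convergence of the integral and holomorphy in $t$ follow from the resolvent bound; the semigroup identity $T(t+s)=T(t)T(s)$ follows from the resolvent equation through a standard Fubini argument; boundedness of $T$ on $\Sigma_\delta\cap\{|z|\leq 1\}$ follows by deforming $\Gamma$ to scale with $1/|t|$; and $T(t)x=0$ for all $t>0$ forces the Laplace transform of $T(\cdot)x$ to vanish, so $R(\lambda,G)x=0$ and hence $x=0$, yielding condition (c).

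The principal technical obstacle is the Sobolevskii-type passage from the half-plane resolvent bound to the sectorial one: it is this step that converts the purely quantitative hypothesis of the theorem into the geometric information needed to make sense of the Dunford--Cauchy integral. Once the sectorial estimate is available, the construction of $T$ and the verification of its properties proceed along essentially the same lines as the classical proof for strongly continuous semigroups; we refer to \cite[Section 3.7]{abhn} for a detailed execution.
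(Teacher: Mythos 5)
Your argument is correct: it is the standard characterization proof (rotating the Laplace integral for necessity, and for sufficiency upgrading the half-plane bound to a sectorial one via the Neumann series around $\omega+1+i\eta$ and then defining $T$ by a Dunford--Cauchy integral), which is precisely the proof found in the references the paper relies on. The paper itself gives no argument but simply cites \cite[Proposition 2.1.11]{lunardi} and \cite[Corollary 3.7.12, Proposition 3.7.4]{abhn}, so your proposal matches the intended proof in substance.
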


\begin{proof}
\cite[Proposition 2.1.11]{lunardi} or \cite[Corollary 3.7.12 and Proposition 3.7.4]{abhn}.
\end{proof}

The following Lemma is taken from \cite[Proposition 2.1.4]{lunardi}.

\begin{lem}\label{l.sc}
Let $T$ be a holomorphic semigroup with generator $G$. Then we have $T(t)x \to x$ if and only if
$x \in \overline{D(G)}$.
\end{lem}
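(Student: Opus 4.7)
The plan is to handle the two implications separately. For the easy direction ($T(t)x\to x$ implies $x\in\overline{D(G)}$) I would invoke the standard fact that any holomorphic semigroup satisfies $T(t)X\subset D(G)$ for every $t>0$: the map $t\mapsto T(t)x$ is holomorphic on a sector and its derivative together with the resolvent representation of $G$ places $T(t)x$ in $D(G)$. Hence $x=\lim_{t\to 0^+}T(t)x$ is a limit of elements of $D(G)$, so $x\in\overline{D(G)}$.

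For the converse I would first establish the claim for $x\in D(G)$, and afterwards extend it to all of $\overline{D(G)}$ by a standard $\eps$-approximation, using the crucial bound $M:=\sup_{t\in(0,1]}\|T(t)\|<\infty$ which is built into the definition of a holomorphic semigroup (since $T$ is required to be bounded on $\Sigma_\theta\cap\{|z|\le 1\}$). Concretely, for $x\in D(G)$, since $\frac{d}{dr}T(r)x=T(r)Gx$ on $(0,\infty)$, integration between $s$ and $t$ gives $T(t)x-T(s)x=\int_s^t T(r)Gx\,dr$ for $0<s<t$. The integrand is continuous on $(0,\infty)$ and bounded in norm by $M\|Gx\|$ near $0$, so letting $s\to 0^+$ proves the existence of $y:=\lim_{s\to 0^+}T(s)x$ in $X$.

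To identify $y$ with $x$, I would compute $\lim_{\Re\lambda\to\infty}\lambda R(\lambda,G)x$ in two different ways. The resolvent identity gives $\lambda R(\lambda,G)x=x+R(\lambda,G)Gx$, and Theorem~\ref{t.holgen} forces $\|R(\lambda,G)\|=O(1/\Re\lambda)$, so this limit equals $x$. On the other hand, the Laplace representation $R(\lambda,G)x=\int_0^\infty e^{-\lambda t}T(t)x\,dt$ together with the substitution $s=\lambda t$ yields $\lambda R(\lambda,G)x=\int_0^\infty e^{-s}T(s/\lambda)x\,ds$; for each fixed $s>0$ one has $T(s/\lambda)x\to y$ as $\lambda\to\infty$, and for $\lambda$ large enough the integrand is dominated by $Me^{-s/2}\|x\|$, so dominated convergence forces the limit to be $y$. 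Hence $y=x$, as desired, and the $\eps$-approximation (bounded above by $(M+1)\|x-y_n\|+\|T(t)y_n-y_n\|$ with $y_n\in D(G)$ approximating $x$) extends the conclusion to $\overline{D(G)}$.

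The main obstacle throughout is the absence of strong continuity at $0$: every limit operation near the origin, namely the Bochner integrability of $T(\cdot)Gx$ on $(0,t]$, the dominated convergence in the Laplace transform, and the density argument from $D(G)$ to $\overline{D(G)}$, has to be supported by the sector-boundedness of $T$ built into the definition of holomorphy rather than by a continuity hypothesis at $t=0$.
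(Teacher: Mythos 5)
Your argument is correct. Note, however, that the paper does not prove Lemma~\ref{l.sc} at all: it simply cites Proposition~2.1.4 in Lunardi's book, so there is no internal proof to compare against. What you have written is a self-contained proof along the standard lines of that reference, and it has the virtue of using only ingredients already present in the paper's framework: the Laplace-transform definition of the generator, the resolvent bound of Theorem~\ref{t.holgen}, and the boundedness of $T$ on $\Sigma_\theta\cap\{|z|\le 1\}$ built into the definition of holomorphy. All the delicate points caused by the lack of strong continuity at $0$ are handled correctly: the existence of $\lim_{s\to 0^+}T(s)x$ for $x\in D(G)$ via the integrated derivative formula, the identification of that limit with $x$ by computing $\lim_{\lambda\to\infty}\lambda R(\lambda,G)x$ in two ways, and the $\eps$-argument passing to $\overline{D(G)}$. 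Two small remarks. First, in the easy direction you invoke $T(t)X\subset D(G)$; this is indeed standard for holomorphic semigroups (it is essentially Lunardi, Proposition~2.1.1, which the paper also quotes), but you could avoid relying on it altogether by running your own Abelian argument in reverse: if $T(t)x\to x$, the same substitution $s=\lambda t$ and dominated convergence give $\lambda R(\lambda,G)x\to x$, and since $\lambda R(\lambda,G)x\in D(G)$ this yields $x\in\overline{D(G)}$ directly. Second, the dominating function in your Laplace-transform step should carry the constant from the exponential bound $\|T(t)\|\le \tilde M e^{\omega t}$ for $t>1$ as well as the sector bound $M$ for $t\le 1$ (so something like $\max(M,\tilde M)e^{-s/2}\|x\|$ for $\lambda$ large), but this is a purely cosmetic adjustment and does not affect the argument.
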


It follows from Lemma \ref{l.sc} that a holomorphic semigroup is strongly continuous if and only if its generator is densely defined. Recalling
from \cite[Proposition 2.1.1]{lunardi}
that $D(G)$ (and hence also $\overline{D(G)}$) is invariant under $T$, a second 
corollary of Lemma \ref{l.sc} is that every holomorphic semigroup $T$ restricts to a strongly continuous and holomorphic
semigroup on $\overline{D(G)}$.

\subsection{Transition kernels and the strong Feller property} In the study of Markov processes it is important
that the transition semigroup consists of \emph{kernel operators}, as these give the transition probabilities of the process.
We recall the relevant definitions and results and introduce the \emph{strong Feller property} which is important for the ergodic theory of Markov processes. In this subsection, $K$ is a compact metric space and $\mathscr{B}(K)$ denotes the Borel $\sigma$-algebra on $K$. Later on, we will consider $K= \overline{\Omega}$.

A (bounded) \emph{kernel} on $K$ is a map $k: K\times \mathscr{B}(K) \to \CC$ such that
\begin{enumerate}
[(i)]
\item the map $x\mapsto k(x,A)$ is Borel-measurable for all $A\in \mathscr{B}(K)$,
\item the map $A \mapsto k(x,A)$ is a (complex) measure on $\mathscr{B}(K)$ for each $x\in K$ and
\item we have $\sup_{x\in K} |k|(x,K)<\infty$, where $|k|(x,\cdot)$ denotes the total variation of the measure $k(x,\cdot)$.
\end{enumerate}

Let $X= C(K)$ or $X=B_b(K)$. We call an operator $T\in \mathscr{L}(X)$ a \emph{kernel operator} if there exists a kernel $k$ 
such that
\begin{equation}\label{eq.kernel}
Tf(x) = \int_K f(y)\, k(x,dy)
\end{equation}
for all $f\in X$ and $x\in K$. As there is at most one kernel $k$ satisfying the above equation, we call $k$ the \emph{kernel
associated with $T$}. Conversely $T$ is called the \emph{operator associated with $k$}.

Let us note that every bounded operator on $C(K)$ is a kernel operator, since given $T\in \mathscr{L}(X)$ we can set
$k(x,\cdot) \coloneqq T^*\delta_x \in \mathscr{M}(K)$ for every $x \in K$. Standard arguments (cf.\ \cite[Proposition 3.5]{k11}) show
that $k$ is indeed a kernel and it is then easy to see that $T$ is associated with $k$. On the other hand, not every bounded
operator on $B_b(K)$ is a kernel operator. We have the following characterization.

\begin{lem}\label{l.kernel}
Let $T\in \mathscr{L}(B_b(K))$. The following are equivalent.
\begin{enumerate}
[(i)]
\item $T$ is a kernel operator.
\item $T$ is pointwise continuous, i.e. if $f_n$ is a bounded sequence converging pointwise to $f$, then $Tf_n$ converges pointwise
to $Tf$.
\end{enumerate}
\end{lem}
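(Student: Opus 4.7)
The plan is to prove (i) $\Rightarrow$ (ii) by dominated convergence and (ii) $\Rightarrow$ (i) by constructing the kernel from $T$ itself. For the easy direction, if $T$ has kernel $k$ and $(f_n)$ is a bounded sequence in $B_b(K)$ converging pointwise to $f$, then for each $x$ the measure $k(x,\cdot)$ is finite, and dominated convergence gives $Tf_n(x) = \int f_n\,dk(x,\cdot) \to \int f\,dk(x,\cdot) = Tf(x)$.

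For (ii) $\Rightarrow$ (i) I would define the candidate kernel by
\[
k(x,A) \coloneqq T(\one_A)(x), \qquad x \in K,\ A \in \cB(K).
\]
Measurability of $x\mapsto k(x,A)$ is immediate since $T(\one_A)\in B_b(K)$, and finite additivity with $k(x,\emptyset)=0$ follows from linearity of $T$. For countable additivity I would take a disjoint sequence $(A_n)$ with union $A$ and note that $\sum_{n=1}^N \one_{A_n} \to \one_A$ boundedly pointwise; the pointwise continuity of $T$ then yields $\sum_n k(x,A_n) = k(x,A)$. This is the one step where hypothesis (ii) does substantive work.

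For the uniform total variation bound, fix $x$ and a finite measurable partition $A_1,\dots,A_N$ of $K$, pick unimodular scalars $\eps_n$ with $\eps_n k(x,A_n)=|k(x,A_n)|$, and observe that
\[
\sum_{n=1}^N |k(x,A_n)| \;=\; T\!\Bigl(\sum_{n=1}^N \eps_n \one_{A_n}\Bigr)(x) \;\leq\; \|T\|,
\]
since the simple function in brackets has sup norm at most $1$. Taking the supremum over partitions gives $|k|(x,K)\leq \|T\|$ uniformly in $x$, so $k$ is indeed a bounded kernel.

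It remains to verify the representation $Tf(x)=\int_K f(y)\,k(x,dy)$ for every $f\in B_b(K)$. It holds by construction for indicators, extends by linearity to simple functions, and then to arbitrary $f\in B_b(K)$ via the standard approximation of $f$ by a uniformly bounded sequence of simple functions $f_n\to f$ pointwise: pointwise continuity of $T$ gives $Tf_n(x)\to Tf(x)$, while dominated convergence with respect to $k(x,\cdot)$ gives $\int f_n\,dk(x,\cdot)\to \int f\,dk(x,\cdot)$. The only genuine obstacle is the $\sigma$-additivity of $A\mapsto k(x,A)$, which is not automatic from the boundedness and linearity of $T$ and is precisely what the pointwise-continuity hypothesis buys us; everything else is linear algebra together with a standard partition trick.
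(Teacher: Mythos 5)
Your proof is correct and follows essentially the same route as the paper: dominated convergence for (i) $\Rightarrow$ (ii), and for the converse the definition $k(x,A) \coloneqq (T\one_A)(x)$, with $\sigma$-additivity coming from pointwise continuity and the representation extended from simple functions. The only difference is cosmetic: you make explicit the uniform total variation bound and pass from simple functions to general $f$ by bounded pointwise approximation, where the paper invokes sup-norm density of simple functions.
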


\begin{proof}
The implication `(i) $\Rightarrow$ (ii)' follows from the dominated convergence theorem. For the converse, put $k(x,A) \coloneqq 
(T\one_A)(x)$, where $\one_A$ denotes the indicator function of the set $A\in \mathscr{B}(K)$. Using (ii), we see that $k(x,\cdot)$ is a measure, thus $k$ is a kernel. By the density of simple functions in $B_b(K)$ with respect to the supremum norm, we easily see that
$T$ is associated with $k$.
\end{proof}

Let us note that given a kernel operator $T$ on $C(K)$, we can always extend $T$ to a kernel operator $\tilde T$ on 
$B_b(K)$ by defining $(\tilde T f)(x)$ by the right-hand side of \eqref{eq.kernel} for $f\in B_b(K)$. The operator $\tilde T$ is called the \emph{canonical extension} of $T$. The operator $T$ may have
other extensions to a bounded operator on $B_b(K)$, but $\tilde T$ is the only one which is a kernel operator.

\begin{defn}
A kernel operator $T$ on $B_b(K)$ is called \emph{strong Feller operator} if $Tf \in C(K)$ for every $f\in B_b(K)$. A kernel operator 
$T$ on $C(K)$ is called \emph{strong Feller operator} if its canonical extension $\tilde T$ is a strong Feller operator.
\end{defn}

Let us now consider a bounded, open set $\Omega \subset \CR^d$ and put $K \coloneqq \overline{\Omega}$. In what follows,
we will be concerned with operators $T\in \mathscr{L}(L^\infty(\Omega))$ which take values $C(K)$. It would be tempting to also call such an operator a strong Feller operator, but there are some subtleties in this situation. Let us explain this a little bit.

The concept of `strong Feller operator' is only useful for kernel operators. Give an operator $T\in \mathscr{L}(L^\infty(\Omega))$
which takes values in $C(K)$, we can consider the restriction $S\coloneqq T|_{C(K)}$ of $T$ to $C(K)$. As observed above,
$S$ is a kernel operator and thus has a canonical extension $\tilde S$ to $\mathscr{L}(B_b(K))$. Now let
$\iota: B_b(K) \to L^\infty(\Omega)$ map a bounded measurable function to its equivalence class modulo equality almost everywhere.
Then the obvious question is whether $T \circ \iota = \tilde S$. Example 5.4 in \cite{akk16} shows that this need not be the case
without further assumptions. The problem is that $T\circ \iota$ need not be a kernel operator.
However, using the characterization of kernel operators in \ref{l.kernel}, we obtain

\begin{lem}\label{l.sf}
Let $T\in \mathscr{L}(L^\infty(\Omega))$ take values in $C(\overline{\Omega})$ and let $\iota: B_b(\overline{\Omega}) \to
L^\infty (\Omega)$ be as above. Then $T\circ\iota$ is a kernel operator if and only if for every bounded sequence $(f_n)\subset L^\infty(\Omega)$ converging almost everywhere to $f$, we have $Tf_n(x) \to Tf(x)$ for all $x\in \overline{\Omega}$. In this case,
$T\circ\iota$ is a strong Feller operator. 
\end{lem}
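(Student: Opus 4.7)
My plan is to reduce both implications to Lemma \ref{l.kernel}, which characterizes kernel operators on $B_b(\overline{\Omega})$ as those bounded operators that are sequentially continuous under bounded pointwise convergence. The rest of the work is to translate between pointwise convergence on $\overline{\Omega}$ (the natural notion for $B_b$) and almost-everywhere convergence on $\Omega$ (the natural notion for $L^\infty$), which amounts to bookkeeping since $\iota$ ignores values on $\partial\Omega$ and $L^\infty$-representatives may be modified freely on Lebesgue null sets.

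For the ``if'' direction I would take a bounded sequence $(g_n) \subset B_b(\overline{\Omega})$ with $g_n \to g$ pointwise on $\overline{\Omega}$, observe that $(\iota g_n)$ is bounded in $L^\infty(\Omega)$ and converges a.e.\ to $\iota g$, apply the stated hypothesis with $f_n = \iota g_n$ to obtain $(T \circ \iota) g_n(x) \to (T \circ \iota) g(x)$ for every $x \in \overline{\Omega}$, and invoke Lemma \ref{l.kernel} to conclude that $T \circ \iota$ is a kernel operator.

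For the converse direction, I would take a bounded sequence $(f_n) \subset L^\infty(\Omega)$ with $f_n \to f$ a.e.\ and lift it as follows: after truncating at the common $L^\infty$-bound, pick representatives $\tilde{g}_n, \tilde{g} \in B_b(\overline{\Omega})$ of $f_n, f$ with a uniform sup bound; let $N \subset \Omega$ be a Lebesgue null set off which $\tilde{g}_n \to \tilde{g}$ pointwise; then redefine each $\tilde{g}_n$ to equal $\tilde{g}$ on $N \cup \partial\Omega$. This modification does not alter $\iota \tilde{g}_n = f_n$ (since the change occurs on a Lebesgue null subset of $\Omega$ together with the boundary, which $\iota$ ignores), and produces a uniformly bounded sequence converging pointwise everywhere on $\overline{\Omega}$ to $\tilde{g}$. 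Bounded pointwise continuity of $T \circ \iota$ (Lemma \ref{l.kernel}) then delivers $Tf_n(x) = (T \circ \iota)(\tilde{g}_n)(x) \to (T \circ \iota)(\tilde{g})(x) = Tf(x)$ for every $x \in \overline{\Omega}$.

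Finally, the strong Feller claim is automatic once $T \circ \iota$ is known to be a kernel operator: the hypothesis that $T$ takes values in $C(\overline{\Omega})$ immediately gives $(T \circ \iota)(f) = T(\iota f) \in C(\overline{\Omega})$ for every $f \in B_b(\overline{\Omega})$. The only step with any real content is the lift in the converse direction; I expect it to reduce to the elementary observation that a Lebesgue null subset of $\Omega$ together with $\partial\Omega$ is a set on which $L^\infty$-representatives are entirely free.
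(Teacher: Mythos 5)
Your proof is correct and follows essentially the route the paper intends: Lemma \ref{l.sf} is stated there as a direct consequence of the characterization in Lemma \ref{l.kernel}, and your lifting argument supplies exactly the required bookkeeping between a.e.\ convergence on $\Omega$ and everywhere convergence on $\overline{\Omega}$ (plus the trivial observation for the strong Feller claim). One cosmetic remark: take $N$ to be the exact set where $\tilde g_n(x)\not\to\tilde g(x)$, which is Borel, so that the redefined functions remain in $B_b(\overline{\Omega})$.
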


We define:

\begin{defn}\label{def.sf}
An operator $T\in \mathscr{L}(L^\infty(\Omega))$ is called \emph{strong Feller operator} if
\begin{enumerate}
[(a)]
\item $Tf \in C(\overline{\Omega})$ for every $f\in L^\infty(\Omega)$ and
\item For every bounded sequence $(f_n)\subset L^\infty(\Omega)$ converging pointwise almost everywhere to $f$, we have
$Tf_n \to Tf$ pointwise.
\end{enumerate}
\end{defn}

\section{Greiner's boundary perturbation revisited}\label{s.2}

An important tool in this article is boundary perturbation of the generator of a holomorphic semigroup, established by Greiner in his seminal
article \cite{greiner}. As a matter of fact, we need some extensions of Greiners results whose proofs follow along the lines of 
Greiners article with minor modifications. More precisely, we will consider semigroups which are not necessarily strongly continuous. Besides being interesting in its own right, this will allow us to establish under appropriate assumptions the strong Feller property
for the perturbed semigroup. Likewise, other modifications allow us to prove compactness, positivity and domination for the perturbed semigroup. In an effort of being self contained and for the convenience of the reader we provide complete proofs.

Throughout this section, we make the following assumption.

\begin{hyp}\label{hyp.greiner}
We are given complex Banach spaces $(X, \|\cdot\|_X)$, $(D, \|\cdot\|_D)$ and $(\partial X, \|\cdot\|_{\partial X})$, where $D$ is continuously 
embedded into $X$. We identify $D$ with its image in $X$ and frequently consider the closure $\overline{D}$ of $D$ in $X$. 
Moreover, we are given a continuous \emph{maximal operator} $A : D \to X$, a continuous \emph{boundary operator} $B: D \to \partial X$ and a \emph{boundary 
perturbation} $\Phi : \overline{D} \to \partial X$. We assume that all of these mappings are linear and continuous. Moreover, we assume
the following.
\begin{enumerate}
[(a)]
\item The boundary operator $B$ is surjective;
\item the boundary perturbation $\Phi$ is compact;
\item the operator $A_0 \coloneqq A|_{\ker B}$ generates a holomorphic semigroup on $X$ and we have
$\overline{D(A_0)} = \overline{D}$. We denote by $\omega$ a real number such that any $\lambda \in \CC$ with
$\Re \lambda > \omega$ belongs to $\rho (A_0)$.
\end{enumerate}
\end{hyp}

In comparison to Greiner's original work, the main difference in our assumption is that we do not assume the operator $A_0$
to be densely defined in $X$. Consequently, the semigroup $T$ generated by $A_0$ need not be strongly continuous. However, since
$\overline{D(A_0)} = \overline{D}$, it follows from Lemma \ref{l.sc} that for every $f \in \overline{D}$ the orbit $t \mapsto T(t)f$ is strongly continuous on $[0,\infty)$ and $T$ restricts to a strongly continuous holomorphic semigroup on $\overline{D}$.
\smallskip

Given the above maps, we define the perturbed operator $A_\Phi$ by 
\[
D(A_\Phi) \coloneqq \{ u \in D : Bu=\Phi u \}, \quad A_\Phi u = Au.
\]
We can now formulate our version of Greiner's result.

\begin{thm}\label{t.greiner}
Assuming Hypothesis \ref{hyp.greiner}, the operator $A_\Phi$ generates a holomorphic semigroup on $X$ which restricts to a strongly 
continuous  and holomorphic semigroup on $\overline{D}$.
\end{thm}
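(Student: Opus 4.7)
My plan is to adapt Greiner's boundary perturbation strategy to the non-densely-defined setting, reducing the generation of $A_\Phi$ to an explicit resolvent formula, verifying the resolvent bound of Theorem~\ref{t.holgen}, and then recovering the strongly continuous restriction to $\overline{D}$ via Lemma~\ref{l.sc}. For each $\lambda\in\rho(A_0)$ I would introduce the abstract \emph{Dirichlet operator} $L_\lambda:\partial X\to D$ as the inverse of $B|_{\ker(\lambda-A)}$. Its well-definedness and boundedness come from the splitting $D=D(A_0)\oplus\ker(\lambda-A)$ (every $u\in D$ uniquely equals $R(\lambda,A_0)(\lambda-A)u+[u-R(\lambda,A_0)(\lambda-A)u]$), the surjectivity of $B$, and the closed graph theorem; a direct computation also yields
\[
L_\lambda = L_{\lambda_0} + (\lambda_0-\lambda)\, R(\lambda,A_0)\, L_{\lambda_0}, \qquad \lambda,\lambda_0\in\rho(A_0).
\]
Solving $(\lambda-A_\Phi)u=f$ by the ansatz $u=R(\lambda,A_0)f+L_\lambda g$ turns the boundary condition $Bu=\Phi u$ into $(I-\Phi L_\lambda)g=\Phi R(\lambda,A_0)f$, so whenever $I-\Phi L_\lambda$ is invertible on $\partial X$ we obtain $\lambda\in\rho(A_\Phi)$ with
\[
R(\lambda,A_\Phi) \;=\; R(\lambda,A_0) \;+\; L_\lambda\,(I-\Phi L_\lambda)^{-1}\,\Phi\, R(\lambda,A_0).
\]

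The main technical step, and the one I expect to be the principal obstacle, is proving $\|\Phi L_\lambda\|_{\mathscr{L}(\partial X)}\to 0$ as $\Re\lambda\to+\infty$. This is exactly where the hypothesis $\overline{D(A_0)}=\overline{D}$ enters: it delivers the Yosida-type limit $\lambda R(\lambda,A_0)v\to v$ in $X$ for every $v\in\overline{D}$, and combined with $\|\lambda_0 R(\lambda,A_0)\|\to 0$ the identity above gives $L_\lambda g\to 0$ in $X$ for every fixed $g\in\partial X$. Together with the uniform bound $\sup_{\Re\lambda\gg1}\|L_\lambda\|_{\mathscr{L}(\partial X,X)}<\infty$ and the compactness of $\Phi:\overline{D}\to\partial X$, an Arzel\`a--Ascoli/Anselone-type argument (exploiting that $\Phi$ sends the uniformly bounded set $\bigcup_\lambda L_\lambda(B_{\partial X})$ into a relatively compact subset of $\partial X$) upgrades this strong operator convergence to operator norm convergence of $\Phi L_\lambda$. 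A Neumann series then makes $\|(I-\Phi L_\lambda)^{-1}\|$ uniformly bounded on a half-plane $\{\Re\lambda>\omega'\}$, and the resolvent formula combined with the uniform bounds on $\|\lambda R(\lambda,A_0)\|$, $\|L_\lambda\|$, $\|\Phi\|$ and $\|(I-\Phi L_\lambda)^{-1}\|$ yields $\sup_{\Re\lambda>\omega'}\|\lambda R(\lambda,A_\Phi)\|<\infty$. Theorem~\ref{t.holgen} then produces the holomorphic semigroup on $X$ generated by $A_\Phi$.

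For the strongly continuous holomorphic restriction to $\overline{D}$, the remark following Lemma~\ref{l.sc} (applied to the invariant subspace $\overline{D(A_\Phi)}$) reduces matters to proving $\overline{D(A_\Phi)}=\overline{D}$. The inclusion $\overline{D(A_\Phi)}\subset\overline{D}$ is trivial, and for the converse I would verify, for each $f\in\overline{D}$, that $\lambda R(\lambda,A_\Phi)f\to f$ in $X$ as $\Re\lambda\to\infty$ via the resolvent formula: the first summand $\lambda R(\lambda,A_0)f$ tends to $f$ because $f\in\overline{D(A_0)}$, while in the second summand $\Phi(\lambda R(\lambda,A_0)f)\to\Phi f$ in $\partial X$ by continuity of $\Phi$, the factor $(I-\Phi L_\lambda)^{-1}$ is uniformly bounded (and in fact tends to $I$ in norm), and $L_\lambda g\to 0$ in $X$ for each $g$; decomposing $L_\lambda h_\lambda=L_\lambda\Phi f+L_\lambda(h_\lambda-\Phi f)$ for the resulting convergent sequence $h_\lambda$ in $\partial X$ shows the second summand vanishes in the limit. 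This characterisation of $\overline{D(A_\Phi)}$ completes the argument.
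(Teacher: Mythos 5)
Your overall architecture (the splitting $D=D(A_0)\oplus\ker(\lambda-A)$, the Dirichlet operators $L_\lambda$ with the identity $L_\lambda=L_{\lambda_0}+(\lambda_0-\lambda)R(\lambda,A_0)L_{\lambda_0}$, the resolvent formula, and the explicit verification that $\overline{D(A_\Phi)}=\overline{D}$) is sound and parallels the paper, which works with $S_\lambda\Phi$ on $\overline{D}$ and $R(\lambda,A_\Phi)=(I-S_\lambda\Phi)^{-1}R(\lambda,A_0)$ instead of your boundary-space Neumann series. The genuine gap is at your central step: the claim that compactness of $\Phi$ upgrades the strong convergence $L_\lambda g\to 0$ to norm convergence $\|\Phi L_\lambda\|_{\mathscr{L}(\partial X)}\to 0$. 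Knowing that all images $\Phi L_\lambda(B_{\partial X})$ lie in one fixed relatively compact subset of $\partial X$ (collective compactness) together with $\Phi L_\lambda\to 0$ strongly does \emph{not} give norm convergence: with the compact factor on the left, the inputs fed into $\Phi$, namely $L_\lambda g$ with $\|g\|\leq 1$, range only over a bounded, not relatively compact, set. A counterexample to the abstract principle you invoke: on $\ell^2$ take $Kx=\langle x,e_1\rangle e_1$ (rank one, hence compact) and $T_n=(S^*)^n$ the powers of the backward shift; then $T_n\to 0$ strongly, $KT_n(B)\subset\{ce_1:|c|\leq 1\}$ for all $n$, yet $\|KT_n\|=1$ for every $n$ because $KT_nx=\langle x,e_{n+1}\rangle e_1$. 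An Arzel\`a--Ascoli argument does not rescue this, since the unit ball of $\partial X$ is not compact. Consequently $\|\Phi L_\lambda\|\to 0$, and with it your later assertion that $(I-\Phi L_\lambda)^{-1}\to I$ in norm (on which the convergence $h_\lambda\to\Phi f$ in your final step rests), are unjustified as written.

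The compactness must be exploited on the other side, which is exactly what the paper does: since $\Phi$ is applied first in $S_\lambda\Phi$, the strongly-null, uniformly bounded family $S_\lambda=L_\lambda$ is evaluated only on the relatively compact set $\Phi(B_{\overline{D}})$, where strong convergence is uniform; hence $\|L_\lambda\Phi\|_{\mathscr{L}(\overline{D})}\to 0$ and the Neumann series runs on $\overline{D}$. Your boundary-space formulation can be repaired from this: $\|L_\lambda\Phi\|<1$ gives invertibility of $I-\Phi L_\lambda$ with the uniform bound via $(I-\Phi L_\lambda)^{-1}=I+\Phi(I-L_\lambda\Phi)^{-1}L_\lambda$, and $h_\lambda\to\Phi f$ follows from $\|\Phi L_\lambda\Phi\|\leq\|\Phi\|\,\|L_\lambda\Phi\|\to 0$; alternatively, a correct Anselone-type argument uses $\|(\Phi L_\lambda)^2\|\to 0$ (there the inner $\Phi$ does produce a relatively compact set of inputs), which yields uniform boundedness of $(I-\Phi L_\lambda)^{-1}$ without any norm smallness of $\Phi L_\lambda$ itself. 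But as stated, the decisive estimate of your proof is not established.
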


We prepare the proof of Theorem \ref{t.greiner} with some preliminary results.

\begin{lem}\label{l.1}
Assume that $\lambda \in \rho (A_0)$. Then $D= D(A_0)\oplus \ker (\lambda- A)$. 
\end{lem}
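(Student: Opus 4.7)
The plan is to verify the two ingredients of a direct sum decomposition separately: trivial intersection, and spanning.

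For the intersection $D(A_0)\cap \ker(\lambda-A) = \{0\}$, I would pick any $u$ in this intersection. Since $D(A_0) = D\cap \ker B$ and $A_0$ is by definition the restriction $A|_{\ker B}$, we have $A_0 u = Au = \lambda u$, so $(\lambda-A_0)u = 0$. As $\lambda \in \rho(A_0)$, the operator $\lambda - A_0$ is injective, forcing $u=0$.

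For spanning, given an arbitrary $u \in D$, the continuity of $A\colon D\to X$ guarantees that $\lambda u - Au \in X$, so I may set
\[
v \coloneqq R(\lambda, A_0)(\lambda u - Au) \in D(A_0).
\]
Then $(\lambda - A_0)v = \lambda u - Au$, and since $A_0$ and $A$ agree on $D(A_0)$, this rewrites as $(\lambda - A)v = (\lambda - A)u$. Setting $w \coloneqq u - v$ therefore gives $w \in \ker(\lambda - A)$ and the desired decomposition $u = v + w$ with $v \in D(A_0)$ and $w \in \ker(\lambda - A)$.

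I do not anticipate any genuine obstacle; the statement is a formal algebraic consequence of $\lambda \in \rho(A_0)$ together with the identification $A_0 = A|_{\ker B}$. The only point requiring a moment's care is that the argument $\lambda u - Au$ fed into the resolvent does lie in $X$, which is granted by Hypothesis \ref{hyp.greiner} since $A$ maps $D$ continuously into $X$.
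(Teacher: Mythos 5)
Your proof is correct and follows essentially the same route as the paper: trivial intersection via injectivity of $\lambda-A_0$, and the decomposition $u=v+(u-v)$ with $v=R(\lambda,A_0)(\lambda u-Au)$, which is just the paper's surjectivity argument written out explicitly through the resolvent. No gaps.
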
 

\begin{proof}
If $u\in D(A_0)\cap \ker (\lambda - A)$, then $u\in D(A_0)$ satisfies $A_0u=\lambda u$. As $\lambda \in \rho (A_0)$ we must have
$u=0$. Now let $u \in D$ be arbitrary. Since $\lambda - A_0$ is surjective, we find $u_0 \in D(A_0)$ with $(\lambda - A)u = (\lambda - A_0)u_0$. Consequently $u-u_0 \in \ker (\lambda - A)$ whence $u= u_0 + (u-u_0) \in D(A_0) + \ker (\lambda - A)$. 
\end{proof}

In our framework we can formulate well-posedness of the following boundary value problem \eqref{eq.poisson}.

\begin{lem}\label{l.2}
Let $\lambda \in \rho (A_0)$. Then for every $h\in \partial X$ the problem
\begin{equation}\label{eq.poisson}
\begin{cases}
\lambda u - A u & = 0\\
Bu & = h
\end{cases}
\end{equation}
has a unique solution $u =\colon S_\lambda h$ in $D$. The operator $S_\lambda : \partial X \to D$ is continuous, 
$B S_\lambda  = I_{\partial X}$ and $S_\lambda B$ is the projection onto $\ker(\lambda - A)$ along $D(A_0)$.
\end{lem}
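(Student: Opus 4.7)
My plan is to handle uniqueness and existence first, then derive continuity via the open mapping theorem, and finish with the two algebraic identities as essentially formal consequences.

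For uniqueness, the difference $u_1 - u_2$ of two candidate solutions would belong to $\ker B \cap \ker(\lambda - A) = D(A_0) \cap \ker(\lambda - A_0)$, which is $\{0\}$ because $\lambda \in \rho(A_0)$. For existence, I would use surjectivity of $B:D\to\partial X$ (Hypothesis \ref{hyp.greiner}(a)) to pick any $v \in D$ with $Bv=h$ and then set
\[
S_\lambda h \coloneqq v - R(\lambda, A_0)(\lambda - A)v.
\]
The correction term lies in $D(A_0) = \ker B$, so $BS_\lambda h = Bv = h$, and a direct calculation gives $(\lambda - A)S_\lambda h = (\lambda - A)v - (\lambda - A_0)R(\lambda, A_0)(\lambda - A)v = 0$. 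By uniqueness, $S_\lambda h$ is independent of the choice of $v$.

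Continuity of $S_\lambda$ is the one step requiring care: the formula above is not manifestly continuous in $h$, since a lift $v$ with $Bv=h$ need not depend continuously on $h$. Instead, I would invoke Lemma \ref{l.1} together with uniqueness to observe that $S_\lambda h$ is the unique element of $\ker(\lambda - A)$ whose image under $B$ equals $h$. Since $A:D\to X$ is continuous, $\ker(\lambda - A)$ is a closed subspace of $D$ and hence a Banach space. The restriction $B|_{\ker(\lambda - A)}:\ker(\lambda - A)\to \partial X$ is then a continuous linear bijection: injectivity follows from $D(A_0)\cap \ker(\lambda - A) = \{0\}$, and surjectivity from the fact that any preimage $v$ of $h$ under $B$ decomposes as $v = v_0 + v_1 \in D(A_0) \oplus \ker(\lambda - A)$ via Lemma \ref{l.1}, whence $Bv_1 = Bv = h$. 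The open mapping theorem then makes the inverse, namely $S_\lambda$, continuous into $\ker(\lambda - A)$ and a fortiori into $D$.

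The two algebraic identities then follow almost immediately. The equality $BS_\lambda = I_{\partial X}$ is built into the construction. For $S_\lambda B$, given $u\in D$ I decompose $u = u_0 + u_1 \in D(A_0) \oplus \ker(\lambda - A)$ via Lemma \ref{l.1}; then $Bu = Bu_1$, and uniqueness forces $S_\lambda Bu = S_\lambda Bu_1 = u_1$, which exhibits $S_\lambda B$ as the projection onto $\ker(\lambda - A)$ along $D(A_0)$. I do not foresee a substantial obstacle beyond the open mapping step; the content of the lemma is largely a repackaging of Lemma \ref{l.1} and the assumption $\lambda \in \rho(A_0)$.
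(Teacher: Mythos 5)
Your proposal is correct and follows essentially the same route as the paper: both rest on Lemma \ref{l.1} to see that $B|_{\ker(\lambda-A)}$ is a continuous bijection onto $\partial X$, apply the open mapping theorem to get continuity of $S_\lambda$, obtain uniqueness from $\ker B\cap\ker(\lambda-A)=\{0\}$, and read off the projection identity from the direct sum decomposition. Your explicit existence formula $v-R(\lambda,A_0)(\lambda-A)v$ is just an unwinding of the proof of Lemma \ref{l.1} and is a harmless (if slightly redundant) addition.
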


\begin{proof}
By Lemma \ref{l.1} the map $B$ defines a continuous bijection between $\ker (\lambda - A)$ and $\partial X$. As a consequence
of the open mapping theorem $S_\lambda \coloneqq (B|_{\ker (\lambda - A)})^{-1}$ is a continuous linear operator from $\partial X$
to $\ker (\lambda - A)$. Obviously, $u\coloneqq S_\lambda h$ solves \eqref{eq.poisson}. If $\tilde u$ was another solution, 
we must have $u-\tilde u \in \ker B\cap \ker (\lambda - A) = \{0\}$ by Lemma \ref{l.1}. This proves uniqueness.
The last assertions are obvious from the definition.
\end{proof}

\begin{lem}\label{l.3}
Let $\lambda \in \rho (A_0)$. Then for $u\in \overline{D}$ one has $u \in D(A_\Phi)$ if and only if $(I - S_\lambda\Phi)u \in D(A_0)$. In this case
\[
(\lambda - A_\Phi)u = (\lambda - A_0)(I- S_\lambda \Phi )u
\]
for every $u \in D(A_\Phi)$. In particular, if $(I-S_\lambda\Phi) : \overline{D} \to \overline{D}$ is invertible, we have $\lambda \in \rho (A_\Phi)$
and 
\begin{equation}\label{eq.resaphi}
R(\lambda, A_\Phi) = (I- S_\lambda \Phi)^{-1}R(\lambda, A_0).
\end{equation}
\end{lem}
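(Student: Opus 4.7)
The plan is to exploit the direct sum decomposition $D = D(A_0) \oplus \ker(\lambda - A)$ from Lemma \ref{l.1}, together with the identification, from Lemma \ref{l.2}, of $S_\lambda B$ as the projection onto $\ker(\lambda - A)$ along $D(A_0)$. In particular, $I - S_\lambda B$ projects $D$ onto $D(A_0)$, so for every $u \in D$ one has $(I - S_\lambda B)u \in D(A_0)$. The whole lemma then hinges on substituting $\Phi u$ for $Bu$, which is legitimate precisely when the boundary condition $Bu = \Phi u$ defining $D(A_\Phi)$ is in force.

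For the equivalence, I would take $u \in \overline{D}$ and argue as follows. If $u \in D(A_\Phi)$, then $u \in D$ and $Bu = \Phi u$, so $S_\lambda \Phi u = S_\lambda B u$ and hence $(I - S_\lambda \Phi)u = (I - S_\lambda B)u \in D(A_0)$ by the projection property. Conversely, if $(I - S_\lambda \Phi)u \in D(A_0)$, then $u = (I - S_\lambda \Phi)u + S_\lambda \Phi u \in D$ (both summands lie in $D$), and applying $B$ while using $B|_{D(A_0)} = 0$ and $BS_\lambda = I_{\partial X}$ yields $Bu = \Phi u$, so $u \in D(A_\Phi)$.

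The identity $(\lambda - A_\Phi)u = (\lambda - A_0)(I - S_\lambda \Phi)u$ then comes essentially for free: since $S_\lambda \Phi u \in \ker(\lambda - A)$, one has $(\lambda - A)(I - S_\lambda \Phi)u = (\lambda - A)u = (\lambda - A_\Phi)u$, and the previous step guarantees that $(I - S_\lambda \Phi)u$ in fact lies in $D(A_0)$, so $A$ may legitimately be replaced by $A_0$ on the left.

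For the resolvent formula, assume $I - S_\lambda \Phi$ is invertible on $\overline{D}$. Given $f \in X$, I would set $u \coloneqq (I - S_\lambda \Phi)^{-1} R(\lambda, A_0) f$; this is well-defined because $R(\lambda, A_0)f \in D(A_0) \subset \overline{D}$. Then $(I - S_\lambda \Phi)u = R(\lambda, A_0)f \in D(A_0)$, so $u \in D(A_\Phi)$ by the first part, and $(\lambda - A_\Phi)u = (\lambda - A_0)R(\lambda, A_0)f = f$. For injectivity, $(\lambda - A_\Phi)u = 0$ forces $(\lambda - A_0)(I - S_\lambda \Phi)u = 0$, whence $(I - S_\lambda \Phi)u = 0$ since $\lambda \in \rho(A_0)$, and finally $u = 0$ by the assumed invertibility. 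Boundedness of the composed operator follows from the open mapping theorem applied to $I - S_\lambda \Phi$ on $\overline{D}$, combined with continuity of $R(\lambda, A_0)$. I do not anticipate any genuine obstacle here; the main point to watch is that writing $(\lambda - A_0)(I - S_\lambda \Phi)u$ already presupposes $(I - S_\lambda \Phi)u \in D(A_0)$, so the equivalence has to be proved first and the identity read off as an immediate consequence.
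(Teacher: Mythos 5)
Your proposal is correct and follows essentially the same route as the paper: the equivalence is obtained from $BS_\lambda = I_{\partial X}$ (equivalently, the projection property of $S_\lambda B$ from Lemma \ref{l.2}) together with $D(A_0)=\ker B$, the identity comes from $S_\lambda\Phi u\in\ker(\lambda-A)$, and the resolvent formula is read off from it. The only difference is that you spell out the surjectivity/injectivity/boundedness details for \eqref{eq.resaphi}, which the paper leaves implicit.
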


\begin{proof}
Let us first assume that $u \in D(A_\Phi)$, i.e.\ $u \in D$ and $Bu=\Phi u$. Since $BS_\lambda  = I_{\partial X}$ by Lemma \ref{l.2},
we find $B(I- S_\lambda \Phi)u = Bu - BS_\lambda \Phi u = Bu - \Phi u=0$. Thus $(I-S_\lambda \Phi)u \in \ker B$ and consequently
$(I- S_\lambda\Phi)u \in D(A_0)$.

Conversely, if we assume that $u - S_\lambda \Phi u \in D(A_0)$, then $u = (I-S_\lambda \Phi)u + S_\lambda \Phi u \in D$, as
$S_\lambda$ takes values in $D$, and $Bu = BS_\lambda \Phi u = \Phi u$ since $BS_\lambda = I_{\partial X}$. Thus $u \in D(A_\Phi)$.

Let us now assume that $u \in D(A_\Phi)$ or, equivalently, that $(I-S_\lambda \Phi)u \in D(A_0)$. Then
\begin{align*}
(\lambda - A_0)(I- S_\lambda \Phi)u & = (\lambda - A)u - (\lambda - A)S_\lambda \Phi u = (\lambda - A)u 
\end{align*}
since $S_\lambda$ takes values in $\ker (\lambda - A)$. This implies \eqref{eq.resaphi}.
\end{proof}

We now obtain the following criterion to prove that $A_\Phi$ generates a holomorphic semigroup.

\begin{prop}\label{p.1}
Assume that there is some $\rho >\omega $ such that for $\lambda \in \CC$ with $\Re\lambda >\rho$ the map
$I-S_\lambda \Phi$ is invertible with
\[
C \coloneqq \sup_{\Re\lambda > \rho} \|(I-S_\lambda \Phi)^{-1}\|_{\cL (\overline{D})} < \infty.
\]
Then $A_\Phi$ generates a holomorphic semigroup on $X$. 
\end{prop}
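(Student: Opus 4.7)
The plan is to verify the resolvent characterization of holomorphic semigroup generators (Theorem \ref{t.holgen}) for $A_\Phi$ acting on $X$. That is, I want to show that the half-plane $\{\lambda \in \CC : \Re\lambda > \rho\}$ lies in $\rho(A_\Phi)$ and that $\sup_{\Re\lambda > \rho}\|\lambda R(\lambda, A_\Phi)\|_{\cL(X)} < \infty$. Both pieces are essentially already packaged in what comes before.

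First, since $\rho > \omega$, for every $\lambda$ with $\Re\lambda > \rho$ we have $\lambda \in \rho(A_0)$ and, by assumption, $I - S_\lambda \Phi$ is invertible on $\overline{D}$. The last assertion of Lemma \ref{l.3} then yields $\lambda \in \rho(A_\Phi)$ along with the resolvent formula
\[
R(\lambda, A_\Phi) = (I - S_\lambda \Phi)^{-1} R(\lambda, A_0).
\]
The composition makes sense as an operator on $X$ because $R(\lambda, A_0)$ maps $X$ into $D(A_0) \subset \overline{D}$, which is the domain on which $(I - S_\lambda\Phi)^{-1}$ is defined.

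For the norm bound, I use that $\overline{D}$ is a closed subspace of $X$, so the $\overline{D}$-norm coincides with the restriction of the $X$-norm. Hence for $f \in X$,
\[
\|R(\lambda, A_\Phi) f\|_X \leq \|(I - S_\lambda \Phi)^{-1}\|_{\cL(\overline{D})}\, \|R(\lambda, A_0) f\|_X \leq C\, \|R(\lambda, A_0) f\|_X.
\]
Since $A_0$ generates a holomorphic semigroup by Hypothesis \ref{hyp.greiner}(c), Theorem \ref{t.holgen} furnishes a constant $M \coloneqq \sup_{\Re\lambda > \omega} \|\lambda R(\lambda, A_0)\|_{\cL(X)} < \infty$. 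Multiplying the previous inequality by $|\lambda|$ and taking the supremum over $\Re\lambda > \rho$ produces
\[
\sup_{\Re\lambda > \rho}\|\lambda R(\lambda, A_\Phi)\|_{\cL(X)} \leq CM < \infty,
\]
which is precisely the criterion of Theorem \ref{t.holgen}, so $A_\Phi$ generates a holomorphic semigroup on $X$.

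There is no genuine obstacle beyond bookkeeping: all of the heavy lifting has already been done in Lemma \ref{l.3} and in the abstract characterization Theorem \ref{t.holgen}. The only point that deserves a moment's attention is the domain/codomain matching between $\cL(X)$ and $\cL(\overline{D})$ used in the norm estimate—a potential source of confusion given that $D$ carries its own norm $\|\cdot\|_D$ in Hypothesis \ref{hyp.greiner}, but resolved by observing that $\overline{D}$ denotes the closure of $D$ in $X$ and therefore inherits the $X$-norm.
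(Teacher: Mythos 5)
Your proof is correct and follows essentially the same route as the paper: invoke Lemma \ref{l.3} for $\lambda\in\rho(A_\Phi)$ and the resolvent identity \eqref{eq.resaphi}, bound $\|\lambda R(\lambda,A_\Phi)\|\leq CM$ using the holomorphy of the semigroup generated by $A_0$, and conclude via Theorem \ref{t.holgen}. Your extra remark that $\overline{D}$ carries the $X$-norm and that $R(\lambda,A_0)$ maps into $D(A_0)\subset\overline{D}$ is a correct clarification of a point the paper leaves implicit.
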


\begin{proof}
Set
\[
M\coloneqq \sup_{\Re\lambda >\rho}\|\lambda R(\lambda, A_0)\| <\infty
\]
since  $A_0$ generates a holomorphic semigroup.
As a consequence of Lemma \ref{l.3}, for $\Re \lambda > \rho$ we have $\lambda \in \rho (A_\Phi)$ and
\[
\|\lambda R(\lambda, A_\Phi)\| = \| \lambda (I-S_\lambda\Phi)^{-1} R(\lambda, A_0)\| \leq \|(I-S_\lambda\Phi)^{-1}\|\|\lambda R(\lambda, A_0)\| \leq CM.
\]
By Theorem \ref{t.holgen}, this implies that $A_\Phi$ generates a holomorphic  semigroup on $X$. 
\end{proof}

We can now prove the main result of this section.

\begin{proof}[Proof of Theorem \ref{t.greiner}]
In view of Proposition \ref{p.1}, making use of the Neumann series, it suffices to prove that
$S_\lambda\Phi \to 0$ in $\cL(\overline{D})$ as $\Re\lambda \to \infty$. Since $\Phi: \overline{D}\to \partial X$ is compact it suffices to prove that $S_\lambda h \to 0$ as $\Re \lambda \to \infty$ for every $h \in \partial X$.

To prove this, let $h \in \partial X$ and fix $\mu\in \rho  (A_0)$. We put 
\[
u_\lambda \coloneqq S_\lambda h, \quad u_\mu \coloneqq S_\mu h \quad\mbox{and}\quad u = u_\lambda - u_\mu.
\]
Note that $u_\lambda, u_\mu, u \in D$ and that $u \in D(A_0)$. Since $u_\lambda \in \ker (\lambda - A)$ and $u_\mu \in \ker (\mu - A)$ we have
\[
(\lambda - A_0) u = -(\lambda - A)u_\mu = (\mu-\lambda )u_\mu 
\]
and hence $u = (\mu-\lambda)R(\lambda, A_0)u_\mu$. Consequently,
\[
u_\lambda = u_\mu -\lambda R(\lambda, A_0)u_\mu + \mu R(\lambda, A_0)u_\mu \to u_\mu - u_\mu + 0 =0
\]
as $\Re\lambda \to \infty$, since $\lambda R(\lambda, A_0)f \to f$ for every $f \in \overline{D(A_0)} = \overline{D}$.
\end{proof}

We can now establish some additional properties of the operator $A_\Phi$ and the semigroup generated by it. We start with compactness.

\begin{cor}\label{c.compact}
In the situation of Theorem \ref{t.greiner}, if $A_0$ has compact resolvent, then so does $A_\Phi$.
\end{cor}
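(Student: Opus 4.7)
The plan is to read off compactness of $R(\lambda,A_\Phi)$ directly from the factorization furnished by Lemma \ref{l.3}. Fix any $\lambda$ with $\Re \lambda$ large enough that both $\lambda \in \rho(A_0)$ and $I - S_\lambda \Phi$ is invertible on $\overline{D}$; the proof of Theorem \ref{t.greiner} produces such $\lambda$ by showing $S_\lambda \Phi \to 0$ in $\cL(\overline{D})$ as $\Re\lambda \to \infty$ (so a Neumann series argument gives invertibility). For such $\lambda$, equation \eqref{eq.resaphi} yields
\[
R(\lambda, A_\Phi) = (I - S_\lambda \Phi)^{-1} R(\lambda, A_0).
\]

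Now the argument is a one-line composition. By hypothesis $R(\lambda, A_0) \in \cL(X)$ is compact, and its range lies in $D(A_0) \subset \overline{D}$. The operator $(I - S_\lambda\Phi)^{-1}$ belongs to $\cL(\overline{D})$, so composing a compact operator with a bounded one gives a compact operator from $X$ to $\overline{D} \subset X$. Hence $R(\lambda, A_\Phi)$ is compact.

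To conclude that $A_\Phi$ has compact resolvent, recall that compactness of $R(\lambda, A_\Phi)$ at a single point of $\rho(A_\Phi)$ propagates to every point via the resolvent identity $R(\mu,A_\Phi) - R(\lambda,A_\Phi) = (\lambda-\mu)R(\mu,A_\Phi)R(\lambda,A_\Phi)$. There is essentially no obstacle here: the only subtlety worth flagging is that the factorization in Lemma \ref{l.3} holds \emph{a priori} only for $\Re\lambda$ sufficiently large, so one must either restrict to that range and then propagate, or verify directly that $(I-S_\lambda\Phi)^{-1}$ exists in $\cL(\overline D)$ for all $\lambda \in \rho(A_\Phi)$; the former is simpler and enough.
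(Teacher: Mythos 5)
Your proposal is correct and follows essentially the same route as the paper: the paper's proof is exactly to invoke the factorization \eqref{eq.resaphi} together with the ideal property of compact operators, and your additional remark about propagating compactness via the resolvent identity is the standard (implicit) completion of that argument.
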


\begin{proof}
This follows immediately from the identity \eqref{eq.resaphi} and the ideal property of compact operators.
\end{proof}

Next we address positivity of the semigroup. Most often we will be concerned with Banach lattices such as $C(\overline{\Omega})$
or $L^\infty(\Omega)$. However, we will occasionally (for example in the following corollaries) also consider closed subspaces of such spaces and therefore need the notion of positivity also in a more general setting. 
To that end, we assume that our Banach space $X$ is the complexification  of a real ordered Banach space $X_\CR$. This means that in the real Banach space $X_{\CR}$ a positive, proper, closed 
cone $X_+$ is given, i.e.\ we have $X_++X_+ \subset X_+$, $\CR_+\cdot X_+ \subset X_+$ and $X_+ \cap (-X_+) = \{0\}$.
For $u\in X$ we write $u\geq 0$ if $u\in X_+$. An operator $S : X \to X$ is called \emph{positive} if
$SX_+ \subset X_+$, we write $S \geq 0$. Given two operators $S_1, S_2 : X \to X$, we write $S_1\leq S_2$ if $S_2-S_1 \geq 0$. A semigroup $T$ on $X$ is called \emph{positive} if $T(t)\geq 0$ for all $t>0$.

If $Y \subset X$ is a closed subspace of $X$, then $Y_+ \coloneqq Y\cap X_+$ is a closed, proper cone, such that $Y_{\CR}\coloneqq
Y\cap X_{\CR}$ becomes an ordered Banach space. Note that we do not assume that our cone is generating, i.e.\ we do not necessarily have that $X_+-X_+ =X_{\CR}$.

\begin{cor}\label{c.positive}
Assume in addition to Hypothesis \ref{hyp.greiner} that $X$ is the complexification of a real ordered Banach space and that $A_0$ generates a positive semigroup. If there is a $\rho >\omega$ such that
for $\lambda \in \CR$ with $\lambda > \rho$ the operator $S_\lambda \Phi$ is positive, then also the semigroup generated
by $A_\Phi$ is positive.
\end{cor}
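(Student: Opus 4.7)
The plan is to first establish positivity of the resolvent $R(\lambda,A_\Phi)$ for real $\lambda$ sufficiently large, and then transfer positivity to the semigroup through an approximation argument that sidesteps the possible lack of strong continuity at zero.

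I would start from the factorization \eqref{eq.resaphi}: for real $\lambda$ beyond a suitable threshold,
\[
R(\lambda,A_\Phi)=(I-S_\lambda\Phi)^{-1}R(\lambda,A_0).
\]
The proof of Theorem \ref{t.greiner} already shows $\|S_\lambda\Phi\|_{\cL(\overline{D})}\to 0$ as $\Re\lambda\to\infty$, so for real $\lambda$ large the Neumann series
\[
(I-S_\lambda\Phi)^{-1}=\sum_{k=0}^{\infty}(S_\lambda\Phi)^{k}
\]
converges in $\cL(\overline{D})$. Every partial sum is positive by the hypothesis on $S_\lambda\Phi$, and $\overline{D}\cap X_+$ is closed in $\overline{D}$, so the limit is positive too. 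On the other hand, positivity of the semigroup $T_0$ generated by $A_0$ together with the Laplace representation $R(\lambda,A_0)f=\int_0^\infty e^{-\lambda t}T_0(t)f\,dt$ gives $R(\lambda,A_0)\geq 0$ on $X$ for real $\lambda>\omega$; moreover $R(\lambda,A_0)$ maps $X$ into $D(A_0)\subset\overline{D}$. Composing, $R(\lambda,A_\Phi)\geq 0$ on $X$ for all sufficiently large real $\lambda$.

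Denote by $T_\Phi$ the holomorphic semigroup generated by $A_\Phi$. By Theorem \ref{t.greiner} its restriction to $\overline{D}$ is strongly continuous and holomorphic with generator the part of $A_\Phi$ in $\overline{D}$, whose resolvents are the restrictions of $R(\lambda,A_\Phi)$. Hence for $g\in X_+\cap\overline{D}$ the classical Post--Widder inversion formula yields
\[
T_\Phi(t)g=\lim_{n\to\infty}\bigl[(n/t)R(n/t,A_\Phi)\bigr]^{n}g,
\]
and each iterate on the right belongs to $X_+$ by the previous step, so closedness of $X_+$ gives $T_\Phi(t)g\in X_+$. To handle arbitrary $f\in X_+$, I would approximate by $f_n\coloneqq nR(n,A_\Phi)f$; for large $n$ we have $f_n\in X_+\cap D(A_\Phi)\subset X_+\cap\overline{D}$, so the previous case applies and $T_\Phi(t)f_n\in X_+$. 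Since $T_\Phi(t)f\in D(A_\Phi)\subset\overline{D(A_\Phi)}$ for every $t>0$, the resolvents behave as an approximate identity on this vector and
\[
T_\Phi(t)f_n=nR(n,A_\Phi)T_\Phi(t)f\longrightarrow T_\Phi(t)f
\]
as $n\to\infty$. One more appeal to the closedness of $X_+$ then gives $T_\Phi(t)f\in X_+$.

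The step I expect to be most delicate is the transfer from resolvent to semigroup, because $D(A_\Phi)$ need not be dense in $X$ and so the usual `positive resolvent $\Leftrightarrow$ positive semigroup' equivalence, typically proved via Post--Widder on the whole space, is unavailable. The two-tier argument---running Post--Widder inside $\overline{D}$, where strong continuity holds, and then using that $T_\Phi(t)X\subset\overline{D}$ together with $n R(n,A_\Phi)T_\Phi(t)f\to T_\Phi(t)f$ to push positivity back to arbitrary $f\in X_+$---is the ingredient that bridges this gap.
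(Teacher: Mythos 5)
Your argument is correct, and its first half (positivity of $R(\lambda,A_0)$ via the Laplace representation, positivity of $(I-S_\lambda\Phi)^{-1}$ via the Neumann series, hence $R(\lambda,A_\Phi)\geq 0$ for large real $\lambda$ by \eqref{eq.resaphi}) is exactly the paper's argument. Where you diverge is the transfer from resolvent to semigroup. The paper simply invokes the vector-valued Post--Widder inversion formula \cite[Theorem 1.7.7]{abhn}; the point you flag as delicate is in fact harmless there, because that theorem is a statement about Laplace transforms of functions and recovers $T(t)f$ at every point of continuity of the orbit $t\mapsto T(t)f$ on $(0,\infty)$ --- and orbits of the (not necessarily strongly continuous at $0$) semigroups of this paper are by definition continuous on $(0,\infty)$, so no density of $D(A_\Phi)$ in $X$ is needed and the inversion applies directly on all of $X$, yielding $T_\Phi(t)f=\lim_n\bigl[(n/t)R(n/t,A_\Phi)\bigr]^{n+1}f\in X_+$. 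Your two-tier substitute --- classical Post--Widder on $\overline{D}$, where the restriction of $T_\Phi$ is a $C_0$-semigroup whose resolvent is the restriction of $R(\lambda,A_\Phi)$, followed by the approximation $f_n=nR(n,A_\Phi)f\in X_+\cap D(A_\Phi)$ and the identity $T_\Phi(t)f_n=nR(n,A_\Phi)T_\Phi(t)f\to T_\Phi(t)f$, which uses the holomorphic smoothing $T_\Phi(t)X\subset D(A_\Phi)$ --- is a valid workaround; its merit is that it only needs the textbook $C_0$ version of Post--Widder, at the cost of extra steps (and of quietly using $R(\lambda,A_\Phi)\overline{D}\subset\overline{D}$ and the smoothing property, both of which do hold here). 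So: same strategy, with a more elaborate but correct justification of the final step where the paper's citation already suffices.
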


\begin{proof}
If the semigroup $T$ generated by $A_0$ is positive then we have $R(\lambda, A_0) \geq 0$ for $\lambda > \omega$, as the resolvent is given as the Laplace transform of the semigroup.
For sufficiently large $\lambda \in \CR$ we have $\|S_\lambda\Phi\| <1$ and $S_\lambda\Phi$ positive. Thus, by the Neumann series,
\[
(I-S_\lambda\Phi)^{-1} = \sum_{n=0}^\infty (S_\lambda\Phi)^n
\]
is a positive operator. It follows from \eqref{eq.resaphi} that $R(\lambda, A_\Phi)$ is positive for sufficiently large $\lambda$. It follows
from the Post--Widder inversion formula \cite[Theorem 1.7.7]{abhn}
that the semigroup generated by $A_\Phi$ is positive.
\end{proof}

Next we want to compare different perturbations of our operator $A$. We can obtain different perturbations by either using different boundary operators $B$ or by using different boundary perturbations $\Phi$.

\begin{cor}\label{c.compare}
Let $X, D, \partial X$ and $A$ be as in Hypothesis \ref{hyp.greiner} and assume that $X$ and $\partial X$ are complexifications of real ordered Banach spaces. Moreover, assume that we are given
maps $B_1, B_2 : D \to \partial X$ and $\Phi_1, \Phi_2 : \overline{D} \to \partial X$ such that Hypothesis \ref{hyp.greiner}
is satisfied for the operators $A, B_1, \Phi_1$ and the operators $A, B_2, \Phi_2$. We write $A_0^j \coloneqq A|_{\ker B_j}$ and
$S_\lambda^j \coloneqq (B_j|_{\ker (\lambda - A)})^{-1}$ for $j=1,2$. Finally, we assume that
\begin{enumerate}
[(a)]
\item The semigroup generated by $A_0^j$ is positive for $j=1,2$;
\item $0\leq \Phi_1 \leq \Phi_2$;
\item For some $\rho >\omega$ and all $\lambda > \rho$ we have $0 \leq S_\lambda^1 \leq S_\lambda^2$;
\item If $u \in D$ is positive, then $B_2u \leq B_1 u$.
\end{enumerate}
Then for the semigroups $T_1$ generated by $A^1_{\Phi_1}$ and $T_2$ generated by $A^2_{\Phi_2}$ we have
$0\leq T_1(t) \leq T_2(t)$ for all $t>0$.
\end{cor}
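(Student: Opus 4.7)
The plan is to obtain a term-by-term comparison of the Neumann-series expansions of $R(\lambda, A^1_{\Phi_1})$ and $R(\lambda, A^2_{\Phi_2})$ for sufficiently large real $\lambda$, and then transfer the resolvent inequality to the semigroups via the Post--Widder inversion formula, exactly as in the proof of Corollary \ref{c.positive}. Positivity of $T_1$ and $T_2$ themselves is immediate from Corollary \ref{c.positive}, since by (b) and (c) each operator $S^j_\lambda \Phi_j$ is positive for large $\lambda$.

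For $\lambda$ sufficiently large, the argument in the proof of Theorem \ref{t.greiner} gives $\|S^j_\lambda \Phi_j\|_{\cL(\overline{D})} < 1$, so Lemma \ref{l.3} together with the Neumann series yields
\[
R(\lambda, A^j_{\Phi_j}) = \sum_{n=0}^\infty (S^j_\lambda \Phi_j)^n R(\lambda, A_0^j).
\]
From $0 \leq S^1_\lambda \leq S^2_\lambda$ and $0 \leq \Phi_1 \leq \Phi_2$ one immediately gets $0 \leq S^1_\lambda \Phi_1 \leq S^2_\lambda \Phi_2$, and the elementary fact that $0 \leq P \leq Q$ together with $0 \leq R \leq S$ implies $PR \leq QS$ (since $QS - PR = (Q-P)S + P(S-R)$) propagates by induction to $0 \leq (S^1_\lambda \Phi_1)^n \leq (S^2_\lambda \Phi_2)^n$ for every $n$. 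Consequently, the desired comparison $R(\lambda, A^1_{\Phi_1}) \leq R(\lambda, A^2_{\Phi_2})$ will follow once we establish the resolvent inequality $0 \leq R(\lambda, A_0^1) \leq R(\lambda, A_0^2)$ for the unperturbed operators.

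This last step is the main obstacle, and is where condition (d) is actually used. Fix $f \geq 0$ in $X$ and set $v_j \coloneqq R(\lambda, A_0^j) f$. Positivity of the semigroup generated by $A_0^j$ gives $v_j \geq 0$, and by construction $v_j \in D(A_0^j) \subset D$ with $(\lambda - A)v_j = f$ and $B_j v_j = 0$. Hence $w \coloneqq v_2 - v_1 \in D$ satisfies $(\lambda - A)w = 0$, so $w \in \ker(\lambda - A)$. Since $v_2 \geq 0$, condition (d) yields $B_1 v_2 \geq B_2 v_2 = 0$, while $B_1 v_1 = 0$, so $B_1 w \geq 0$. By Lemma \ref{l.2}, $S^1_\lambda B_1$ is the projection of $D$ onto $\ker(\lambda - A)$ along $D(A_0^1)$, hence $w = S^1_\lambda B_1 w$; the positivity of $S^1_\lambda$ from (c) then forces $w \geq 0$, i.e.\ $R(\lambda, A_0^1) \leq R(\lambda, A_0^2)$. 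Assembling the pieces gives $0 \leq R(\lambda, A^1_{\Phi_1}) \leq R(\lambda, A^2_{\Phi_2})$ for all sufficiently large real $\lambda$, and the Post--Widder inversion formula concludes $0 \leq T_1(t) \leq T_2(t)$ for every $t > 0$.
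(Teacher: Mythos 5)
Your proof is correct and follows essentially the same route as the paper: positivity of $T_1,T_2$ via Corollary \ref{c.positive}, a Neumann-series comparison of the perturbed resolvents, a comparison $R(\lambda,A_0^1)\leq R(\lambda,A_0^2)$ obtained from (d) together with the projection $S_\lambda B$ of Lemma \ref{l.2}, and finally the Post--Widder inversion formula. The only (harmless) difference is the mirror image in that middle step: you apply (d) to $R(\lambda,A_0^2)f$ and use positivity of $S_\lambda^1$, whereas the paper applies (d) to $R(\lambda,A_0^1)f$ and uses positivity of $S_\lambda^2$.
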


\begin{proof}
Let us first note that since the operators $\Phi_j$ and $S_\lambda^j$ are positive for $\lambda >\rho$ and $j=1,2$, it follows
from Corollary \ref{c.positive} that $T_1$ and $T_2$ are positive semigroups. It follows from (b) and (c) that
\[
(I-S_\lambda^1\Phi_1)^{-1} = \sum_{n=0}^\infty (S_\lambda^1\Phi_1)^n \leq \sum_{n=0}^\infty (S_\lambda^2\Phi_2)^n = (I-S_\lambda^2\Phi_2)^{-1}
\]
for all $\lambda >\omega$. Now fix $f \geq 0$ and $\lambda>\rho$. We put $u_j \coloneqq R(\lambda, A_0^j)f$.
Then $(\lambda - A)(u_1-u_2) = 0$ and $B_1u_1 = B_2u_2=0$. Using our assumption (d) and the fact that $u_1 \geq 0$, 
we see that 
\[
B_2(u_1-u_2) =  B_2u_1 - B_1u_1 \leq 0.
\] 
Consequently, as $u_1-u_2 = S^2_\lambda (B_2(u_1 - u_2))$ and $S_\lambda^2$ is positive $u_1-u_2 \leq 0$. This proves
$R(\lambda, A_0^1) \leq R(\lambda, A_0^2)$. Combining this with the above and Equation \eqref{eq.resaphi}, we find
\[
R(\lambda, A_{\Phi_1}^1) = (I-S_\lambda^1\Phi_1)^{-1}R(\lambda, A_0^1) \leq (I-S_\lambda^2\Phi_2)^{-1}R(\lambda, A_0^2) 
= R(\lambda, A_{\Phi_2}^2)
\]
for all sufficiently large $\lambda$. By the Post--Widder inversion formula \cite[Theorem 1.7.7]{abhn} it follows that
$T_1 \leq T_2$.
\end{proof}

Our last topic is the strong Feller property for the semigroup generated by the perturbed operator.

\begin{cor}\label{c.strongfeller}
Assume in addition to Hypothesis \ref{hyp.greiner} that 
$X=L^\infty(\Omega)$ and $\overline{D} = C(\overline{\Omega})$ for some open and bounded $\Omega\subset \CR^d$. 
If $A_0$ generates a strong Feller semigroup on $X$, then so 
does $A_\Phi$.
\end{cor}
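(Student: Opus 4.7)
The plan is to transfer the strong Feller property from $T_0$ to its resolvent, then to $R(\lambda,A_\Phi)$ via Greiner's identity $R(\lambda,A_\Phi)=(I-S_\lambda\Phi)^{-1}R(\lambda,A_0)$ from Lemma \ref{l.3}, and finally from $R(\lambda,A_\Phi)$ back to the semigroup $T_\Phi$ by the Dunford contour integral. The first step is routine: writing $R(\lambda,A_0)f(x)=\int_0^\infty e^{-\lambda t}(T_0(t)f)(x)\,dt$ with the continuous representative of $T_0(t)f$, dominated convergence with majorant $Me^{(\omega-\Re\lambda)t}\|f\|_\infty$ yields both continuity of $R(\lambda,A_0)f$ in $x$ and pointwise convergence $R(\lambda,A_0)f_n(x)\to R(\lambda,A_0)f(x)$ whenever $f_n\to f$ boundedly and almost everywhere, using that each $T_0(t)$ already satisfies Definition \ref{def.sf}.

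The key step extends strong Feller from $R(\lambda,A_0)$ to $R(\lambda,A_\Phi)$. For $\lambda\in\rho(A_\Phi)$, the operator $I-S_\lambda\Phi$ is Fredholm of index $0$ on $\overline{D}=C(\overline{\Omega})$ because $S_\lambda\Phi$ is compact; its kernel, by the argument of Lemma \ref{l.3}, consists of $\lambda$-eigenvectors of $A_\Phi$, so $I-S_\lambda\Phi$ is invertible and $R(\lambda,A_\Phi)$ maps $L^\infty(\Omega)$ into $C(\overline{\Omega})$. For pointwise convergence, let $f_n\to f$ boundedly and a.e., set $g_n\coloneqq R(\lambda,A_0)f_n$ and $g\coloneqq R(\lambda,A_0)f$. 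Step 1 gives a bounded sequence $(g_n)\subset C(\overline{\Omega})$ converging pointwise to $g\in C(\overline{\Omega})$; since $g$ is continuous, dominated convergence against any $\nu\in\mathscr{M}(\overline{\Omega})=C(\overline{\Omega})^*$ shows $g_n\to g$ \emph{weakly} in $C(\overline{\Omega})$, and the compactness of $\Phi$ then upgrades this to norm convergence $\Phi g_n\to\Phi g$ in $\partial X$. To deduce that $h_n\coloneqq(I-S_\lambda\Phi)^{-1}g_n\to h\coloneqq(I-S_\lambda\Phi)^{-1}g$ pointwise I argue by subsequences: any subsequence of the bounded sequence $(h_n)\subset C(\overline{\Omega})$ admits a further subsequence with $\Phi h_{n_k}\to\xi\in\partial X$ by compactness; rearranging $h_{n_k}=g_{n_k}+S_\lambda\Phi h_{n_k}$ shows $h_{n_k}\to h_\infty\coloneqq g+S_\lambda\xi\in C(\overline{\Omega})$ pointwise; continuity of $h_\infty$ again promotes this to weak convergence, so $\Phi h_{n_k}\to\Phi h_\infty$ by compactness, forcing $\xi=\Phi h_\infty$; consequently $(I-S_\lambda\Phi)h_\infty=g$, and injectivity yields $h_\infty=h$. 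Since every subsequence has a further subsequence with the same pointwise limit $h$, the whole sequence $h_n$ converges pointwise to $h$.

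For the final step I represent $T_\Phi(t)=\tfrac{1}{2\pi i}\int_\Gamma e^{\lambda t}R(\lambda,A_\Phi)\,d\lambda$ along a contour $\Gamma$ in the sector of analyticity of $A_\Phi$ on which $|e^{\lambda t}|\|R(\lambda,A_\Phi)\|$ is integrable. Property (a) of Definition \ref{def.sf} is immediate, since for $t>0$ holomorphy gives $T_\Phi(t)L^\infty(\Omega)\subset D(A_\Phi)\subset D\subset C(\overline{\Omega})$. For property (b), applying the point-evaluation functional $\delta_x\in C(\overline{\Omega})^*$ to the Dunford integral (which converges in operator norm) produces the scalar identity $T_\Phi(t)f(x)=\tfrac{1}{2\pi i}\int_\Gamma e^{\lambda t}R(\lambda,A_\Phi)f(x)\,d\lambda$; Step 2 gives pointwise convergence of the integrand under $f_n\to f$ boundedly a.e., and the uniform bound $|R(\lambda,A_\Phi)f_n(x)|\leq C|\lambda|^{-1}\sup_n\|f_n\|_\infty$ combined with the decay of $e^{\lambda t}$ along $\Gamma$ supplies an integrable dominant, so dominated convergence closes the argument. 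The main obstacle is Step 2: to make the compact operator $\Phi$ act usefully on a merely pointwise-convergent sequence one needs weak convergence in $C(\overline{\Omega})$, and this is precisely what continuity of the limit function provides via dominated convergence against measures.
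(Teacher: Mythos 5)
Your Steps 1 and 2 are correct: the Laplace-transform argument gives the strong Feller property of $R(\lambda,A_0)$ for $\Re\lambda>\omega$, and your subsequence/compactness argument does show that $U\coloneqq(I-S_\lambda\Phi)^{-1}$ preserves bounded pointwise convergence on $C(\overline{\Omega})$. The paper gets this second step in one line by duality: since $U$ is bounded on $C(\overline{\Omega})$, $U^*\delta_x\in\mathscr{M}(\overline{\Omega})$, so $R(\lambda,A_\Phi)f_n(x)=\langle R(\lambda,A_0)f_n,U^*\delta_x\rangle\to\langle R(\lambda,A_0)f,U^*\delta_x\rangle$ by dominated convergence; your route through weak convergence in $C(\overline{\Omega})$ and compactness of $\Phi$ is a valid, if longer, substitute. (Minor point: your Fredholm remark also needs $\lambda\in\rho(A_0)$ to define $S_\lambda$ and apply Lemma \ref{l.3}; for $\Re\lambda$ large this is automatic and invertibility already follows from the Neumann series in the proof of Theorem \ref{t.greiner}.)

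The genuine gap is in Step 3. Any contour $\Gamma$ for which $\frac{1}{2\pi i}\int_\Gamma e^{\lambda t}R(\lambda,A_\Phi)\,d\lambda$ converges absolutely under the bound $\|R(\lambda,A_\Phi)\|\leq C|\lambda|^{-1}$ must have arms along which $\Re\lambda\to-\infty$ (on a vertical line the integrand only decays like $|\lambda|^{-1}$ and $|e^{\lambda t}|$ does not decay). But your Step 2 establishes the pointwise convergence $R(\lambda,A_\Phi)f_n(x)\to R(\lambda,A_\Phi)f(x)$ only where $R(\lambda,A_0)$ is known to be a strong Feller operator, i.e.\ for $\Re\lambda>\omega$, via the Laplace representation; on the left-going parts of $\Gamma$ neither this nor even the validity of the factorization \eqref{eq.resaphi} is available (and the resolvent identity does not propagate property (b) of Definition \ref{def.sf} to such $\lambda$ in this abstract setting). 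So the dominated-convergence step for the integrand fails on most of $\Gamma$. This is precisely the difficulty the paper sidesteps by invoking the proof of \cite[Corollary 5.8]{akk16}, which reduces the claim to the strong Feller property of $R(\lambda,A_\Phi)$ for \emph{large real} $\lambda$ only. Your argument can be repaired along your own lines, for instance by using the absolutely convergent inversion formula $t\,T_\Phi(t)=\frac{1}{2\pi i}\int_{c-i\infty}^{c+i\infty}e^{\lambda t}R(\lambda,A_\Phi)^2\,d\lambda$ with $c>\omega$ (obtained from your sectorial contour by an integration by parts and a contour shift, using $\|R(\lambda,A_\Phi)^2\|\lesssim(1+|\Im\lambda|)^{-2}$ on the line), noting that $R(\lambda,A_\Phi)^2$ is again strong Feller because properties (a) and (b) of Definition \ref{def.sf} are stable under composition; alternatively, approximate $T_\Phi(t)$ in operator norm by powers $\bigl(\tfrac{n}{t}R(\tfrac{n}{t},A_\Phi)\bigr)^n$ and observe that (a) and (b) pass to operator-norm limits of operators mapping into $C(\overline{\Omega})$. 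As written, however, the final step does not go through.
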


\begin{proof}
By the proof of \cite[Corollary 5.8]{akk16} it suffices to prove that for sufficiently large $\Re\lambda$ the operator $R(\lambda, A_\Phi)$ is a strong Feller operator. But this follows
from \eqref{eq.resaphi}: The hypothesis implies that $R(\lambda, A_0)$ is a strong Feller operator, in particular it maps  $L^\infty(\Omega)$ to $C(\overline{\Omega})$. Since
$U \coloneqq (I-S_\lambda \Phi)^{-1}$ is a bounded linear operator on $C(\overline{\Omega})$ also
$R(\lambda, A_\Phi)$ maps $L^\infty(\Omega)$ to $C(\overline{\Omega})$. Moreover, if $f_n$ is a bounded sequence in $L^\infty(\Omega)$ converging pointwise almost everywhere to $f$, then $R(\lambda, A_0)f_n$ is a bounded sequence which 
converges pointwise to $R(\lambda, A_0)f$. Since
$U$ is bounded on $C(\overline{\Omega})$ we have for $x \in \overline{\Omega}$
\begin{align*}
 R(\lambda, A_\Phi)f_n(x) & = \langle UR(\lambda, A_0)f_n, \delta_x\rangle = \langle R(\lambda, A_0)f_n, U^*\delta_x\rangle 
\\ & \to  \langle R(\lambda, A_0)f, U^*\delta_x\rangle = R(\lambda, A_\Phi)f(x),
\end{align*}
where we have used dominated convergence.
\end{proof}

\section{Local Robin boundary conditions}\label{s.local}

In this section we collect some results on elliptic operators with local Robin boundary conditions which we will need in the next section when we establish our results concerning non-local boundary conditions.

Let $\Omega \subset \CR^d$ be a bounded open set with Lipschitz boundary. As we are talking about positive semigroups, 
we will consider real-valued spaces $L^p(\Omega), C(\overline{\Omega}), C_b(\Omega)$ and $B_b(\Omega)$ throughout. Only when we 
are concerned with holomorphic semigroups we need spaces of complex-valued functions, in which case we pass to the complexification of these spaces. Concerning the coefficients of our operator we make the following assumptions.

\begin{hyp}\label{hyp.coeff}
We are given bounded, real-valued, measurable functions $a_{ij}$, $b_j$, $c_j$, $d_0$ on $\Omega$ for $i,j=1, \ldots d$.
The diffusion coefficients
$a= (a_{ij})$ are assumed to be bounded and \emph{strictly elliptic}, i.e.\ there is a constant $\eta>0$ such that
for all $\xi \in \CR^d$ and almost all $x \in \Omega$ we have
\[
\sum_{i,j=1}^d a_{ij}(x)\xi_i\xi_j \geq \eta |\xi|^2.
\]
\end{hyp}

With these assumptions we define the operator $\cA : H^1(\Omega)\to \mathscr{D}(\Omega)'$ by
\[
\cA u \coloneqq -\sum_{i,j=1}^d D_i (a_{ij}D_j u) - \sum_{j=1}^d D_j(b_ju) + \sum_{j=1}^d c_j D_j u + d_0 u.
\]
Here, $H^1(\Omega)$ denotes the usual Sobolev space of order one, 
$\mathscr{D}(\Omega) = C_c^\infty(\Omega)$ is the space of all test functions and $\mathscr{D}(\Omega)'$ is the space of 
all distributions. We introduce the continuous bilinear form $\mathfrak{a} : H^1(\Omega)\times H^1(\Omega) \to \CR$ given by
\[
\mathfrak{a}[u,v] \coloneqq \sum_{i,j=1}^d \int_{\Omega} a_{ij}D_iuD_jv \, \dx x+ \sum_{j=1}^d \int_{\Omega}
b_j uD_jv + c_j (D_j u)v\, \dx x+ \int_{\Omega} d_0 uv\, \dx x
\]
for $u,v\in H^1(\Omega)$. Thus $\la \cA u, \varphi\ra = \mathfrak{a}[u,\varphi]$ for all $u\in H^1(\Omega)$ and $\varphi
\in \mathscr{D}(\Omega)$.

If $u\in H^1(\Omega)$, we say that $\cA u \in L^2(\Omega)$ if there exists a function $f \in L^2(\Omega)$ such that
$\la \cA u, \varphi \ra = [ f, \varphi]$ for all $\varphi \in \mathscr{D}(\Omega)$. Here, and in what follows, 
\[
[f, g]\coloneqq \int_{\Omega} f  g \, \dx x\]
denotes the scalar product in $L^2(\Omega)$. If $\cA u \in L^2(\Omega)$
the function $f$ above is unique and we identify $\cA u$ and $f$.

Next we define the \emph{weak conormal derivative} by testing against  functions in $H^1(\Omega)$ rather than functions in $\mathscr{D}(\Omega)$ only.

\begin{defn}
Let $u\in H^1(\Omega)$ be such that $\cA u \in L^2(\Omega)$. For a function $h\in L^2(\partial \Omega)$ we say 
that $h$ is the \emph{weak conormal derivative of $u$} and write $\partial_\nu^{\cA} u \coloneqq h$ if the Green formula
\[
\mathfrak{a}[u,v] - [\cA u, v] = \int_{\partial \Omega} hv\, \dx \sigma 
\]
holds for all $v \in H^1(\Omega)$.
\end{defn}

Under our assumptions on the coefficients the weak conormal derivative, if it exists, is unique.
It depends on the operator $\cA$ only through the coefficients $a=(a_{ij})$ and $b_j$. Moreover, if the coefficients and the boundary 
of $\Omega$ are smooth enough the weak conormal derivative coincides with the usual conormal derivative
\[
\partial_\nu^{\cA}u = \sum_{j=1}^d\Big(\sum_{i=1}^d a_{ij} D_i u + \tr b_j u\Big)\nu_j
\]
where $\nu = (\nu_1, \ldots, \nu_d)$ is the unit outer normal of $\Omega$. In particular, $\partial_\nu^{\cA}\one =
\sum_{j=1}^d \tr b_j \nu_j$.
For a proof of these facts and more information
we refer to \cite[Section 8.1]{A15}.\smallskip

Next we endow our differential operator with Robin boundary conditions, given through a real 
function $\beta \in L^\infty(\partial\Omega)$. For now, we do not (as in Hypothesis \ref{h.mu}) assume that $\beta \geq 0$, 
but this assumption will be used later on in Theorem \ref{thm:local-gen-res} to obtain analyticity of the semigroup via Gaussian estimates.

To define the differential operator with Robin boundary conditions, we employ the theory of bilinear forms, defining
 $\mathfrak{a}_\beta : H^1(\Omega)\times H^1(\Omega) \to \CR$ by
\[
\mathfrak{a}_\beta [u,v] \coloneqq  \mathfrak{a}[u,v] + \int_{\partial\Omega} \beta uv \,\dx\sigma.
\]
The associated operator $\cA_{\beta}^2$ on $L^2(\Omega)$ is given by
\begin{align*}
D(\cA_{\beta}^2) & \coloneqq \{ u \in H^1(\Omega) : \exists \, f \in L^2(\Omega) \mbox{ with } \mathfrak{a}_\beta[u,v]= [f, v]\, \, \forall\, v \in H^1(\Omega)\}\\
\cA_{\beta}^2 u & \coloneqq f.
\end{align*}
Testing against test functions we see that $\cA_{\beta}^2u=\cA u$ for all $u\in D(\cA_{\beta}^2)$. By
the definition of the weak conormal derivative we obtain the following description of the domain:
\[
D(\cA_{\beta}^2) = \{ u\in H^1(\Omega) : \cA u \in L^2(\Omega) \mbox{ and } \partial_\nu^{\cA} u + \beta \tr u = 0\}.
\]
Thus $\cA_{\beta}^2$ is the realization of $\cA$ with Robin boundary condition. We immediately obtain the following generation 
result.

\begin{prop}
Assume Hypothesis \ref{hyp.coeff} and let $\beta \in L^\infty(\partial \Omega)$. Then the operator $-\cA_{\beta}^2$ generates a positive, strongly continuous semigroup $T^2_\beta$ on $L^2(\Omega)$. 
\end{prop}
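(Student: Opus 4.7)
The plan is to invoke the standard theory of sesquilinear forms on $L^2(\Omega)$. Concretely, I would show that $\mathfrak{a}_\beta$ is continuous and quasi-coercive on the form domain $V = H^1(\Omega)$; this guarantees that the associated operator $\cA_\beta^2$ is $m$-sectorial and hence $-\cA_\beta^2$ generates an analytic (in particular strongly continuous) semigroup on $L^2(\Omega)$, once one notes that $H^1(\Omega)$ is densely embedded in $L^2(\Omega)$. Positivity will then be handled separately via the first Beurling--Deny criterion.

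For continuity, boundedness of $a_{ij}, b_j, c_j, d_0$ and continuity of the trace operator $H^1(\Omega)\to L^2(\partial\Omega)$ (valid for Lipschitz $\Omega$) give $|\mathfrak{a}_\beta[u,v]| \lesssim \|u\|_{H^1}\|v\|_{H^1}$. For quasi-coercivity, strict ellipticity bounds the principal part from below by $\eta\|\nabla u\|_{L^2}^2$; the drift terms $b_j u D_j v + c_j(D_j u)v$ are absorbed into $\frac{\eta}{2}\|\nabla u\|_{L^2}^2$ plus a multiple of $\|u\|_{L^2}^2$ using Young's inequality; and the boundary term $\int_{\partial\Omega}\beta u^2\,\dx\sigma$ is controlled via Ehrling's inequality $\|u\|_{L^2(\partial\Omega)}^2 \leq \varepsilon\|\nabla u\|_{L^2}^2 + C_\varepsilon\|u\|_{L^2}^2$, valid on Lipschitz domains. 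Combining, for some $\omega \in \CR$ large enough we obtain $\mathfrak{a}_\beta[u,u] + \omega\|u\|_{L^2}^2 \geq c\|u\|_{H^1}^2$, i.e.\ $\mathfrak{a}_\beta + \omega$ is $H^1$-coercive. The form methods (see, e.g., \cite{A15}) then yield that $-\cA_\beta^2$ generates a strongly continuous analytic semigroup $T^2_\beta$ on $L^2(\Omega)$.

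For positivity, I apply the first Beurling--Deny criterion in the (possibly non-symmetric) form-theoretic version: it suffices to show that $u\in H^1(\Omega) \Rightarrow u^+\in H^1(\Omega)$, and that $\mathfrak{a}_\beta[u^+, u^-]\leq 0$ for every real-valued $u\in H^1(\Omega)$. The first property is standard ($H^1$ is a lattice and $D_i u^\pm$ vanishes off $\{\pm u > 0\}$). For the second, observe that $u^+u^-=0$ pointwise and $D_iu^+ \cdot D_ju^- = 0$ a.e., because $D_iu^+$ is supported in $\{u>0\}$ while $D_ju^-$ is supported in $\{u<0\}$. Therefore every term in
\[
\mathfrak{a}_\beta[u^+,u^-] = \sum_{i,j}\int a_{ij}D_iu^+ D_ju^-\,\dx x + \sum_j\int \bigl(b_j u^+ D_ju^- + c_j(D_ju^+)u^-\bigr)\,\dx x + \int d_0 u^+u^-\,\dx x + \int_{\partial\Omega}\beta u^+u^-\,\dx\sigma
\]
vanishes, so $\mathfrak{a}_\beta[u^+,u^-]=0$. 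The Beurling--Deny criterion therefore yields positivity of $T^2_\beta$.

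The only real obstacle is verifying coercivity in the presence of the boundary term, and this is precisely where one needs the Lipschitz regularity of $\partial\Omega$ to justify Ehrling's inequality; the drift and potential terms, as well as positivity, are then routine.
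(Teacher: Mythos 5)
Your proposal is correct and follows essentially the same route as the paper: quasi-coercivity of $\mathfrak{a}_\beta$ via an Ehrling-type estimate for the trace (the paper gets exactly your inequality from Lemma \ref{l.am} together with compactness of the trace $H^1(\Omega)\to L^2(\partial\Omega)$), generation by standard form theory, and positivity via the first Beurling--Deny criterion after observing $\mathfrak{a}_\beta[u^+,u^-]=0$. The only point worth making explicit is that the vanishing of the boundary term uses that the trace commutes with lattice operations, so that $\tr(u^+)\,\tr(u^-)=0$ on $\partial\Omega$.
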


\begin{proof}
Using Lemma \ref{l.am} below and the fact that the trace is a compact operator from $H^1(\Omega)$ to $L^2(\partial \Omega)$,
we see that the form $\mathfrak{a}_\beta$ is \emph{elliptic}, i.e.\ there are constants $\alpha >0$ and $\omega \geq 0$ such that
\[
\mathfrak{a}_\beta [u,u] +\omega \|u\|_{L^2(\Omega)}^2 \geq \alpha \|u\|_{H^1(\Omega)}.
\]
Thus, by standard results from the theory of quadratic forms (\cite[Section 1.4]{ouh05}) $-\cA_{\beta}^2$ generates a holomorphic semigroup $T^2_\beta$. The positivity of $T^2_\beta$ follows from \cite[Theorem 2.6]{ouh05} noting
that $\mathfrak{a}_\beta[u^+,u^-]=0$ for all $u \in H^1(\Omega)$. 
\end{proof}

We next investigate when the semigroup $T_\beta^2$ is sub-Markovian. We will use the following lemma.

\begin{lem}\label{l.positive}
Let $g\in L^2(\Omega)$ and $h\in L^2(\partial\Omega)$ be such that
\begin{equation}
\label{eq.positive}
\int_\Omega g v \,\dx x + \int_{\partial\Omega} h v \,\dx \sigma \geq 0
\end{equation}
for all $0\leq v \in H^1(\Omega)$. Then $g\geq 0$ a.e.\ on $\Omega$ and $h\geq 0$ a.e.\ on $\partial \Omega$. Moreover, if in 
\eqref{eq.positive} identity holds for all $v \in H^1(\Omega)$, then $g=0$ a.e.\ on $\Omega$ and $h=0$ a.e.\ on $\partial\Omega$.
\end{lem}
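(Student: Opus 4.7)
My plan is to decouple the two integrals in \eqref{eq.positive} by choosing test functions that first isolate the interior contribution and then the boundary contribution. For the interior assertion I would restrict \eqref{eq.positive} to $0\leq v \in \cD(\Omega)$; then $\tr v=0$, only $\int_\Omega g v\,\dx x \geq 0$ survives, and a standard approximation of $\one_{\{g<0\}}$ by nonnegative test functions forces $g \geq 0$ a.e.\ on $\Omega$.

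For the boundary assertion I need, for each nonnegative $\psi$ on $\partial\Omega$ in a sufficiently dense class, a nonnegative $H^1$-function with trace $\psi$ and vanishing interior mass. Given a nonnegative Lipschitz $\psi : \partial\Omega \to \CR$ with Lipschitz constant $L$, I extend it to a nonnegative Lipschitz function $\Psi$ on $\overline\Omega$ (for instance via $\Psi(x) := \inf_{z\in\partial\Omega}(\psi(z)+L|x-z|)$), set $d(x) := \mathrm{dist}(x,\partial\Omega)$, and choose a smooth cutoff $\eta_\varepsilon \in C^\infty([0,\infty))$ with $\eta_\varepsilon(0)=1$, $\eta_\varepsilon \equiv 0$ on $[\varepsilon,\infty)$, and $0 \leq \eta_\varepsilon \leq 1$. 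The function
\[
v_\varepsilon(x) := \Psi(x)\,\eta_\varepsilon(d(x))
\]
is Lipschitz on $\overline\Omega$ (since $d$ is $1$-Lipschitz on the Lipschitz domain $\Omega$), hence in $H^1(\Omega)$; it is nonnegative, satisfies $\tr v_\varepsilon = \psi$, and $|v_\varepsilon| \leq \|\Psi\|_\infty \one_{\{d\leq\varepsilon\}} \to 0$ pointwise on $\Omega$. Dominated convergence yields $\int_\Omega g v_\varepsilon \,\dx x \to 0$, so \eqref{eq.positive} applied to $v_\varepsilon$ and sent to the limit gives $\int_{\partial\Omega} h\psi\,\dx\sigma \geq 0$. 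Since nonnegative Lipschitz functions are dense in the positive cone of $L^2(\partial\Omega)$, this inequality persists for every $0 \leq \psi \in L^2(\partial\Omega)$, and testing against $\psi = \one_{\{h<0\}}$ yields $h \geq 0$ a.e.\ on $\partial\Omega$.

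The equality statement is then immediate: if \eqref{eq.positive} holds with equality for all $v \in H^1(\Omega)$, then both $(g,h)$ and $(-g,-h)$ satisfy the hypothesis of the inequality version, forcing $\pm g \geq 0$ and $\pm h \geq 0$, hence $g = 0$ and $h = 0$ almost everywhere. The only real subtlety I foresee is verifying that $v_\varepsilon$ truly belongs to $H^1(\Omega)$ with trace exactly $\psi$; this is why one must work with Lipschitz (rather than merely $L^2$) boundary data at the construction stage, exploiting that the Lipschitz regularity of $\partial\Omega$ ensures $d$ is Lipschitz on $\overline\Omega$.
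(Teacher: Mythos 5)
Your argument is correct and follows essentially the same route as the paper: kill the interior term by testing with nonnegative functions supported near $\partial\Omega$ whose traces run through a dense class of nonnegative boundary data, then use dominated convergence and density in $L^2(\partial\Omega)$; the paper simply asserts a suitable sequence $v_n\in C^\infty(\overline\Omega)$ with shrinking supports and uniformly convergent traces, whereas you build the test functions explicitly as $\Psi\,\eta_\varepsilon(d(\cdot))$ with a McShane extension of Lipschitz boundary data. Only a cosmetic remark: $d(\cdot)=\mathrm{dist}(\cdot,\partial\Omega)$ is $1$-Lipschitz for any nonempty closed set, so the Lipschitz regularity of $\partial\Omega$ is needed only to identify the $H^1$-trace of your Lipschitz function with its boundary restriction, not for the Lipschitz continuity of $d$.
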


\begin{proof}
By \eqref{eq.positive} we have $\int_\Omega g v\dx x \geq 0$ for all $0\leq v \in C_c^\infty(\Omega)$. Thus
$g\geq 0$ almost everywhere on $\Omega$. Given a  function $\varphi \in C(\partial \Omega)$, we find a sequence
$v_n \in C^\infty(\overline{\Omega})$ such that $v_n|_{\partial\Omega} \to \varphi$ in $C(\partial \Omega)$, $0\leq v_n \leq \|\varphi\|_\infty$ in $\Omega$ and such that $v_n$ is supported in a relatively open set $U_n\subset\overline{\Omega}$ with $U_n \supset U_{n+1}$
and $\bigcap_{n\in\CN} U_n = \partial \Omega$. Choosing $v= v_n$ in \eqref{eq.positive} and letting $n\to \infty$, we infer from dominated convergence that $\int_{\partial\Omega} h \varphi \dx \sigma \geq 0$. As $\varphi \in C(\partial\Omega)$ was arbitrary, the claim follows.
\end{proof}

\begin{prop}\label{p.submarkov}
Assume Hypothesis \ref{hyp.coeff} and let $\beta \in L^\infty(\partial\Omega)$. We additionally assume that $b_j \in W^{1,\infty}(\Omega)$ for $j=1, \ldots, d$.
\begin{enumerate}
[(a)]
\item The semigroup $T_\beta^2$ is sub-Markovian if and only if
\begin{equation}
\label{eq.sM1}
\sum_{j=1}^d D_j b_j \leq d_0\qquad \mbox{almost everywhere on $\Omega$ and}
\end{equation}
\begin{equation}
\label{eq.sM2}
\sum_{j=1}^d \tr (b_j)\nu_j + \beta \geq 0\qquad \mbox{almost everywhere on $\partial \Omega$.}
\end{equation}
\item The semigroup $T_\beta^2$ is Markovian if and only if
\begin{equation}
\label{eq.M1}
\sum_{j=1}^d D_j b_j = d_0\qquad \mbox{almost everywhere on $\Omega$ and}
\end{equation}
\begin{equation}
\label{eq.M2}
\sum_{j=1}^d \tr (b_j)\nu_j + \beta = 0\qquad \mbox{almost everywhere on $\partial \Omega$.}
\end{equation}

\end{enumerate}
\end{prop}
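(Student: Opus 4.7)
The plan is to apply Ouhabaz's form invariance criterion (see \cite[Theorem 2.2]{ouh05}) to the closed convex set $C = \{u \in L^2(\Omega) : u \leq 1\text{ a.e.}\}$, whose projection is $u \mapsto u \wedge 1$. Since positivity of $T_\beta^2$ is already known, the semigroup is sub-Markovian if and only if $C$ is invariant under $T_\beta^2(t)$, and Ouhabaz's criterion characterizes this by
\[
\mathfrak{a}_\beta[u\wedge 1,(u-1)^+] \geq 0 \qquad \text{for every } u \in H^1(\Omega).
\]
Writing $w = u\wedge 1$, $v = (u-1)^+$, the standard chain rule in $H^1(\Omega)$ yields the pointwise identities $D_j w \cdot D_j v = 0$, $v\, D_j w = 0$, $w\, D_j v = D_j v$ and $wv = v$ (the latter holding also for the traces on $\partial\Omega$, as truncation commutes with the trace). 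Substituting these in, all second-order, convection-in-test, and mixed terms drop out and we are left with
\[
\mathfrak{a}_\beta[w,v] = \sum_{j=1}^d \int_\Omega b_j D_j v\, \dx x + \int_\Omega d_0 v\, \dx x + \int_{\partial \Omega} \beta v\, \dx \sigma.
\]

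Using the assumption $b_j \in W^{1,\infty}(\Omega)$, I would then integrate the drift term by parts to rewrite this as
\[
\mathfrak{a}_\beta[w,v] = \int_\Omega\Bigl(d_0 - \sum_{j=1}^d D_j b_j\Bigr)v\, \dx x + \int_{\partial\Omega}\Bigl(\sum_{j=1}^d \tr(b_j)\nu_j + \beta\Bigr) v\, \dx\sigma.
\]
As $u$ ranges over $H^1(\Omega)$, the function $v = (u-1)^+$ ranges over all of $H^1(\Omega)_+$ (given $0 \leq v \in H^1(\Omega)$, take $u := v + 1$). By Lemma \ref{l.positive}, the nonnegativity of the above expression for every such $v$ is therefore equivalent to the pair \eqref{eq.sM1}–\eqref{eq.sM2}, which proves part (a).

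For part (b), I would exploit the fact that $T_\beta^2$ is Markovian if and only if it is positive and $T_\beta^2(t)\mathbf{1} = \mathbf{1}$ for all $t>0$, equivalently $\mathbf{1} \in D(\cA_\beta^2)$ with $\cA_\beta^2 \mathbf{1} = 0$. Since $D_j \mathbf{1} = 0$, the same integration by parts as above yields
\[
\mathfrak{a}_\beta[\mathbf{1},v] = \int_\Omega\Bigl(d_0 - \sum_{j=1}^d D_j b_j\Bigr)v\, \dx x + \int_{\partial\Omega}\Bigl(\sum_{j=1}^d \tr(b_j)\nu_j + \beta\Bigr) v\, \dx\sigma
\]
for every $v \in H^1(\Omega)$. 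This functional coincides with $[f,\cdot]_{L^2}$ for some $f \in L^2(\Omega)$ exactly when the boundary integrand vanishes (since the trace map is not continuous on $L^2(\Omega)$), and in that case the only possible $f$ is $d_0 - \sum_j D_j b_j$. Applying the equality statement of Lemma \ref{l.positive} to the requirement $\mathfrak{a}_\beta[\mathbf{1},v]=0$ for all $v \in H^1(\Omega)$ then yields simultaneously \eqref{eq.M1} and \eqref{eq.M2}; conversely, if both hold then $\mathbf{1}\in D(\cA_\beta^2)$ with $\cA_\beta^2\mathbf{1} = 0$ and positivity is automatic, so the semigroup is Markovian.

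The main technical step is the application of the chain rule to justify the cancellations in the truncated form $\mathfrak{a}_\beta[u\wedge 1,(u-1)^+]$, and making sure these cancellations survive at the boundary (where $\beta uv$ sits). A minor subtlety is that the condition $\mathbf{1}\in D(\cA_\beta^2)$ in part (b) already forces the boundary condition \eqref{eq.M2}, so that \eqref{eq.M1} is only recovered after the boundary term has been made to vanish.
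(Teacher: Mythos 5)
Your proposal is correct and follows essentially the same route as the paper: the Beurling--Deny--Ouhabaz invariance criterion with the chain-rule cancellations for $u\wedge 1$ and $(u-1)^+$, integration by parts of the drift term using $b_j\in W^{1,\infty}(\Omega)$, and Lemma \ref{l.positive} applied with $u=\one+v$ for the necessity of \eqref{eq.sM1}--\eqref{eq.sM2}, while part (b) is reduced to $\one\in\ker\cA_\beta^2$ exactly as in the paper. The only cosmetic difference is that you extract \eqref{eq.M1} and \eqref{eq.M2} simultaneously from the equality statement of Lemma \ref{l.positive}, whereas the paper first obtains \eqref{eq.M1} from $\cA\one=0$ and then \eqref{eq.M2} from the conormal derivative; both are the same argument in substance.
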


\begin{proof}
(a) The semigroup $T_\beta^2$ is sub-Markovian if and only if  the Beurling--Deny--Ouhabaz criterion holds, i.e.\
\[
\mathfrak{a}_\beta [u\wedge 1, (u-1)^+] \geq 0
\]
for all $u \in H^1(\Omega)$, see
\cite[Chapter 2]{ouh05} and  \cite[Corollary 2.8]{mvv05} or \cite{d16} for the case where the form is not necessarily accretive. Recall that for $u\in H^1(\Omega)$ the functions $u\wedge 1$ and $(u-1)^+$ also belong to
$H^1(\Omega)$ and
\[
 D_j (u\wedge 1)=  \one_{\{u<1\}} D_j u \quad\mbox{and}\quad D_j (u-1)^+ = \one_{\{ u> 1\}} D_j u.
\]
Thus $D_i (u\wedge 1) D_j (u-1)^+ = (u-1)^+ D_j (u\wedge 1) = 0$. We see that
\begin{align*}
& \mathfrak{a}_\beta [u\wedge 1, (u-1)^+]\\  
=  & \int_{\Omega} \sum_{j=1}^d b_j D_j(u-1)^+\, \dx x + \int_{\{ u>1\}} d_0 (u-1)^+\,\dx x
+ \int_{\partial\Omega} \beta ( u -1)^+\, \dx\sigma\\
=  & -\int_{\Omega} \sum_{j=1}^d (D_jb_j) (u-1)^+\, \dx x +  \int_{\partial\Omega}\sum_{j=1}^d b_j \nu_j ( u-1)^+ \, \dx \sigma\\
& \qquad\qquad +
\int_{\Omega} d_0 (u-1)^+\,\dx x
+ \int_{\partial\Omega} \beta ( u -1)^+\, \dx\sigma.
\end{align*}
The latter is positive if \eqref{eq.sM1} and \eqref{eq.sM2} hold whence $T_\beta^2$ is sub-Markovian in this case. This shows sufficiency of these two conditions. 

Conversely, if the semigroup $T_\beta^2$ is sub-Markovian, the Beurling--Deny--Ouhabaz criterion yields
\[
\int_{\Omega} \Big( d_0-\sum_{j=1}^d D_jb_j\Big)(u-1)^+\dx x + \int_{\partial\Omega} \Big(\sum_{j=1}^d b_j\nu_j +\beta\Big)(u-1)^+\dx\sigma \geq 0
\]
for all $u\in H^1(\Omega)$. Choosing $u=\one+v$ with $0\leq v \in H^1(\Omega)$, Lemma \ref{l.positive} shows that
\eqref{eq.sM1} and \eqref{eq.sM2} are valid.\medskip

(b) A Markovian semigroup is in particular sub-Markovian whence the inequalities \eqref{eq.sM1} and \eqref{eq.sM2} are satisfied.
If $T_\beta^2$ is sub-Markovian, then it is Markovian if and only if $\one \in \ker (-\cA_\beta^2)$. Note that
\[
-\cA \one = \sum_{j=1}^d D_j b_j - d_0.
\]
Thus \eqref{eq.M1} is necessary for $T_\beta^2$ to be Markovian. If \eqref{eq.M1} holds, then for $v \in H^1(\Omega)$ we have
\[
\mathfrak{a}[\one, v] -[\cA \one, v] = \sum_{j=1}^d \int_{\Omega} (b_j D_jv + d_0v)\,\dx x = \sum_{j=1}^d \int_{\partial\Omega}
b_j \nu_j v\,\dx\sigma, 
\]
where we used an integration by parts. Thus saying $\partial_\nu^{\cA}\one +\beta = 0$, i.e. $\one \in D(-\cA_\beta^2)$, is equivalent
to 
\[
\sum_{j=1}^d\int_{\partial\Omega} b_j\nu_j v \,\dx\sigma = - \int_{\partial\Omega}\beta v \,\dx\sigma
\]
for all $v\in H^1(\Omega)$ and hence to \eqref{eq.M2}.
\end{proof}

In order to apply the abstract results of Section \ref{s.2}, we need some results about the following
elliptic problem, which are also used implicitly in the proof of Theorem \ref{thm:local-gen-res}.
\begin{equation}\label{eq.weak-sol}
	\left\{\begin{aligned}
		\lambda u+\cA u & =f\text{ on }\Omega\\
		\partial_{\nu}^{\cA}u+\beta u & =h\text{ on }\partial\Omega.
		\end{aligned}\right.
\end{equation}
Obviously, $\mathfrak{a}_\beta$ defines a continuous sesquilinear mapping on $H^1(\Omega)$. By \cite[Corollary 2.5]{Dan09} it is also
\emph{elliptic}, i.e.\ there are some $\omega, \alpha >0$ such that $\mathfrak{a}_\beta[u,u] + \omega \|u\|_{L^2(\Omega)} ^2\geq \alpha \|u \|_{H^1(\Omega)}^2$. With this information at hand, 
one can prove existence and uniqueness of solutions to \eqref{eq.weak-sol} by means of the Lax--Milgram Theorem. Indeed, considering the 
continuous  functional $F$ on $H^1(\Omega)$, given by
$F(v) = \int_\Omega f v\, \dx x + \int_{\partial\Omega} hv\, \dx\sigma$, it follows from the Lax--Milgram Theorem
that for $\lambda > \omega $ there is a unique $u \in H^1(\Omega)$ such that
\[
\mathfrak{a}_\beta [u,v] + \lambda [u,v] = F(v)
\]
for all $v \in H^1(\Omega)$. From \cite[Theorem 3.14(iv)]{N11} we obtain the following result concerning regularity of the solution.

\begin{prop}
\label{prop:robin-reg}
Assume Hypothesis \ref{hyp.coeff}, fix $q>d$ and $\lambda>\omega$. Then there exist constants $\gamma>0$ and $C>0$ such that
whenever $f\in L^{q/2}(\Omega)$ and $h\in L^{q-1}(\partial\Omega)$ the unique solution $u$ of \eqref{eq.weak-sol} belongs
to  $C^\gamma(\overline{\Omega})$ and we have
\[
\|u\|_{C^\gamma(\overline{\Omega})} \leq C \big(\|f\|_{L^\frac{q}{2}(\Omega)} + \| h\|_{L^{q-1}(\partial\Omega)}\big).
\]
\end{prop}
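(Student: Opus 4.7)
The plan is to combine existence from Lax--Milgram (already established in the preceding paragraph) with a two-step regularity bootstrap: first an $L^{\infty}$ estimate, then a Hölder estimate up to the boundary.

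\emph{Step 1: $L^{\infty}$ bound.} Fix $\lambda>\omega$, where $\omega$ is the coercivity constant of $\mathfrak{a}_\beta + \lambda \cdot$. Because $q>d$, standard Sobolev embedding gives $H^1(\Omega)\hookrightarrow L^{2^*}(\Omega)$ with $2^* = 2d/(d-2)$ (for $d\ge 3$; for $d=2$ any Lebesgue exponent works) and, via the trace, $H^1(\Omega)\hookrightarrow L^{2_*}(\partial\Omega)$ with $2_* = 2(d-1)/(d-2)$; the assumptions $f\in L^{q/2}(\Omega)$, $h\in L^{q-1}(\partial\Omega)$ with $q>d$ are then precisely what Stampacchia/Moser iteration needs. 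More concretely, for $k>0$ one tests the weak formulation of \eqref{eq.weak-sol} with $v=(u-k)^+$ (and similarly with $(u+k)^-$), uses ellipticity of $\mathfrak{a}_\beta+\lambda(\cdot,\cdot)$ together with Hölder's inequality on both $\Omega$ and $\partial\Omega$, and iterates à la Stampacchia on the super-level sets $A(k)=\{u>k\}$ to produce a $k_0$ with $|A(k_0)|=0$. This yields
\[
\|u\|_{L^\infty(\Omega)} \;\leq\; C_1\bigl(\|f\|_{L^{q/2}(\Omega)} + \|h\|_{L^{q-1}(\partial\Omega)}\bigr).
\]

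\emph{Step 2: Hölder continuity.} Once $u$ is known to lie in $L^\infty(\Omega)\cap H^1(\Omega)$, one obtains a Hölder exponent $\gamma>0$ in two parts. Interior Hölder continuity is the classical De Giorgi--Nash--Moser theorem applied to $\cA$ with bounded measurable coefficients. For boundary Hölder continuity, the Lipschitz regularity of $\partial\Omega$ allows one to flatten the boundary locally by bi-Lipschitz charts; in the straightened half-ball the Robin condition $\partial^\cA_\nu u+\beta u=h$ transforms into an oblique-derivative-type condition with bounded coefficients and $L^{q-1}$ data. The oscillation of $u$ on shrinking half-balls can then be estimated by a Moser-type decay argument adapted to Robin data, giving $u\in C^\gamma(\overline{\Omega})$ with the asserted linear bound in the data.

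\emph{Closing the proof.} The cleanest way to present this, and what the authors do, is to quote the theorem of Nittka \cite[Theorem 3.14(iv)]{N11}, which packages exactly Steps 1 and 2 for Lipschitz domains, bounded measurable coefficients and Robin-type boundary conditions, producing $u\in C^\gamma(\overline\Omega)$ together with the stated estimate in $\|f\|_{L^{q/2}}$ and $\|h\|_{L^{q-1}}$.

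\emph{Main obstacle.} The interior regularity is routine De Giorgi--Nash--Moser, and the $L^\infty$ bound is a standard Stampacchia iteration. The genuinely delicate point is the boundary Hölder regularity on a merely Lipschitz domain with Robin data of limited integrability: one cannot use smooth reflection tricks, and the oblique-derivative analogue of De Giorgi's oscillation lemma must be carried out carefully in local Lipschitz charts. This is precisely what makes invoking Nittka's theorem, rather than reproving it, the efficient option.
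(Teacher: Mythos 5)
Your proposal is correct and ultimately takes the same route as the paper, which proves this proposition simply by invoking Nittka's regularity theorem \cite[Theorem 3.14(iv)]{N11}; your preliminary sketch of the Stampacchia/De Giorgi--Nash--Moser machinery is a fair description of what lies behind that citation, but the paper does not reprove it either.
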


The following lemma is easy to prove, see e.g.\ \cite[Lemma 2.3]{am}.

\begin{lem}
\label{l.am}
Let $X_1, X_2, X_3$ be Banach spaces such that $X_1$ is reflexive. Let $T: X_1 \to X_3$ be compact, $S: X_1 \to X_2$ be injective. 
Then, given $\eps>0$ there exists a constant $c>0$ such that
\[
\|Tx\|_{X_3} \leq \eps \|x\|_{X_1} + c\|Sx\|_{X_2}
\]
for all $x\in X_1$.
\end{lem}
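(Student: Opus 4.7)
The plan is a standard contradiction argument exploiting reflexivity together with the compactness of $T$ and injectivity of $S$.

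Suppose the conclusion fails for some fixed $\eps>0$. Then for every $n\in\CN$ there exists $x_n\in X_1$ with
\[
\|Tx_n\|_{X_3}>\eps\|x_n\|_{X_1}+n\|Sx_n\|_{X_2}.
\]
Since the inequality scales homogeneously and is trivially true for $x_n=0$, I normalize so that $\|x_n\|_{X_1}=1$. Then the boundedness of $T$ gives $\|Tx_n\|_{X_3}\leq\|T\|$, so dividing the inequality by $n$ forces $\|Sx_n\|_{X_2}\to 0$. In particular $Sx_n\to 0$ in norm, hence also weakly.

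Now invoke reflexivity of $X_1$: the bounded sequence $(x_n)$ admits a subsequence $(x_{n_k})$ converging weakly to some $x\in X_1$. Since $S$ is bounded, $Sx_{n_k}\rightharpoonup Sx$, and combined with $Sx_{n_k}\to 0$ weakly this yields $Sx=0$. Injectivity of $S$ then forces $x=0$. On the other hand, compactness of $T$ sends weak convergence to norm convergence, so $Tx_{n_k}\to Tx=0$ in $X_3$. This contradicts the lower bound $\|Tx_{n_k}\|_{X_3}>\eps$ coming from our original inequality.

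No step here is really the main obstacle; the only point to be slightly careful about is handling the case $x_n=0$ (excluded trivially) so that the normalization is legitimate, and remembering that weak convergence in $X_2$ combined with norm convergence to $0$ pins down the weak limit uniquely. The assumptions are each used exactly once: reflexivity to extract a weakly convergent subsequence, compactness of $T$ to upgrade weak to norm convergence on the range side, and injectivity of $S$ to conclude $x=0$ from $Sx=0$.
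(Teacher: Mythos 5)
Your proof is correct and is precisely the standard argument for this lemma; the paper itself gives no proof but refers to \cite[Lemma 2.3]{am}, where essentially the same contradiction argument (normalize, extract a weakly convergent subsequence by reflexivity, use compactness of $T$ and injectivity of $S$) is used. The only implicit point is that $S$ is understood to be a bounded operator, which you correctly use when passing weak convergence through $S$.
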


We use this lemma to prove the following domination result.

\begin{prop}\label{p.domination}
Assume Hypothesis \ref{hyp.coeff} and let 
$\beta_1,\beta_ 2\in L^\infty(\partial \Omega)$ be such that $\beta_1\leq \beta_2$. There exists $\omega$ so 
that both $\mathfrak{a}_{\beta_1} +\omega$ and $\mathfrak{a}_{\beta_2}+\omega$
are coercive and such that for $\lambda >\omega$ the following holds. Let $0\leq f \in L^2(\Omega)$, $0\leq h \in L^2(\partial \Omega)$.
For $j=1,2$, let $u_j \in H^1(\Omega)$ be the unique solution of
\[
\left\{\begin{aligned}
		\lambda u+\cA u & =f\text{ on }\Omega\\
		\partial_{\nu}^{\cA}u+\beta_j u & =h\text{ on }\partial\Omega.
		\end{aligned}\right.
\]	
Then $0\leq u_2 \leq u_1$. 
\end{prop}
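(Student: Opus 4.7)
My plan is to reduce the domination inequality $u_2 \le u_1$ to a single positivity statement for the inhomogeneous Robin problem \eqref{eq.weak-sol} and then to apply that statement twice. First, by arguing exactly as in the proof of the generation proposition for $T_\beta^2$ (using Lemma~\ref{l.am} together with the compactness of the trace $H^1(\Omega)\to L^2(\partial\Omega)$), I would select a single $\omega\ge 0$ and a constant $\alpha>0$ such that $\mathfrak{a}_{\beta_j}+\omega$ is $H^1$-coercive with constant $\alpha$ for both $j=1,2$. Together with the continuity of the functional $v\mapsto \int_\Omega fv\,\dx x+\int_{\partial\Omega} hv\,\dx\sigma$ on $H^1(\Omega)$, the Lax--Milgram theorem then gives existence and uniqueness of $u_1$ and $u_2$ for $\lambda>\omega$.

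The heart of the argument is the following positivity lemma: for $\lambda>\omega$, $0\le f\in L^2(\Omega)$, $0\le h\in L^2(\partial\Omega)$ and any $\beta\in L^\infty(\partial\Omega)$ for which $\mathfrak{a}_\beta+\omega$ is coercive, the unique $H^1$-solution of \eqref{eq.weak-sol} satisfies $u\ge 0$. To prove it, I would insert the admissible test function $v=u^-\in H^1(\Omega)$ into the weak formulation
\[
\mathfrak{a}_\beta[u,v]+\lambda[u,v]=\int_\Omega fv\,\dx x+\int_{\partial\Omega} hv\,\dx\sigma.
\]
The Beurling--Deny-type identity $\mathfrak{a}_\beta[u^+,u^-]=0$ already recorded in the proof of the generation proposition, together with $u\cdot u^-=-(u^-)^2$, collapses the left-hand side to $-\mathfrak{a}_\beta[u^-,u^-]-\lambda\|u^-\|_{L^2(\Omega)}^2$, while the right-hand side is manifestly nonnegative. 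Coercivity then forces $\alpha\|u^-\|_{H^1(\Omega)}^2\le -(\lambda-\omega)\|u^-\|_{L^2(\Omega)}^2\le 0$, hence $u^-=0$.

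With the lemma in hand, a first application to each of $u_1,u_2$ gives $u_1,u_2\ge 0$. Subtracting the two boundary conditions, $w\coloneqq u_1-u_2$ satisfies $\lambda w+\cA w=0$ on $\Omega$ and $\partial_\nu^{\cA} w+\beta_2 w=(\beta_2-\beta_1)u_1$ on $\partial\Omega$; the right-hand side is nonnegative since $\beta_2\ge\beta_1$ and $u_1\ge 0$. A second application of the lemma with parameters $(\beta_2,0,(\beta_2-\beta_1)u_1)$ yields $w\ge 0$, i.e.\ $u_2\le u_1$. The main delicate point I anticipate is making sure the inhomogeneous boundary data cooperates with the Beurling--Deny identity: one must verify that $v=u^-$ is admissible in $H^1(\Omega)$ and that the boundary integral $\int_{\partial\Omega} h u^-\,\dx\sigma$ carries the correct sign to close the estimate, which happens precisely because $h\ge 0$.
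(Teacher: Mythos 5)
Your proof is correct, and for the domination step it takes a genuinely different (and in fact tidier) route than the paper. The positivity lemma is the same as the paper's first step (the paper tests with $u^+$ for data $\leq 0$, you test with $u^-$ for data $\geq 0$; both rest on the locality identity $\mathfrak{a}_\beta[u^+,u^-]=0$ and coercivity). For the comparison of $u_1$ and $u_2$, however, the paper subtracts the two weak equations, tests against $(u_2-u_1)^+$, and is then left with the boundary term $\|\beta_2\|_{L^\infty(\partial\Omega)}\int_{\partial\Omega}\bigl((u_2-u_1)^+\bigr)^2\dx\sigma$, which it absorbs via the compactness Lemma \ref{l.am}; this forces a further enlargement of $\lambda$ (the proof needs $\lambda>\lambda_0>\omega+c+\eps+1$). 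You instead observe that $w=u_1-u_2$ is itself the weak solution of the $\beta_2$-Robin problem with interior datum $0$ and boundary datum $(\beta_2-\beta_1)\tr u_1$, which is nonnegative because $\beta_1\leq\beta_2$ and $\tr u_1\geq 0$ (the latter following from $u_1\geq 0$, a point you should state explicitly, just as the paper implicitly uses it). Domination is thus reduced to a second application of the positivity lemma, valid for every $\lambda>\omega$ and without invoking Lemma \ref{l.am} again; in particular your argument yields the proposition exactly in the range $\lambda>\omega$ stated, whereas the paper's proof only gives it for sufficiently large $\lambda$ (which suffices for its applications via resolvents and the Post--Widder/Euler formulas, but is a slight mismatch with the statement). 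Do make sure the second application of the lemma is phrased for the weak formulation satisfied by $w$ (which you verify by subtracting the two weak identities), since uniqueness of the auxiliary problem is not what is needed—only that $w$ satisfies the weak identity with the nonnegative data.
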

\begin{proof}
	We first show positivity for weak solutions $u$ of \eqref{eq.weak-sol}. To that end consider $f\leq 0$ and $h\leq 0$ for now. Since $u$ solves \eqref{eq.weak-sol} we have
	\[
		\lambda [u,v] + \mathfrak{a}_\beta [u,v] =[f, v] + \int_{\partial\Omega}hv\, \dx \sigma 
	\]
	for all $v\in H^1(\Omega)$. Setting $v:=u^+$ and noting that $\mathfrak{a}_\beta [u, u^+] = \mathfrak{a}_\beta [u^+, u^+]$ by the locality of $\mathfrak{a}_\beta$, we find
	\[
		\lambda [u^+,u^+]+\mathfrak{a}_\beta [u^+, u^+] =[f, u^+] +\int_{\partial\Omega}hu^+ \, \dx \sigma \leq 0.
	\]
	As $\mathfrak{a}_\beta + \omega$ is coercive we have that $\mathfrak{a}_\beta [u^+,u^+] + \omega\|u^+\|^2_{L^2(\Omega)}\geq\
	\alpha \|u^+\|^2_{H^1(\Omega)}$ for some $\alpha>0$. Together with $\lambda>\omega$ it follows that $\|u^+\|_{H^1(\Omega)}\leq 0$, whence $u\leq0$.\smallskip 
	
We can prove the domination similarly. This time we fix $f \geq 0$ and $h\geq 0$. The solution $u_j$ ($j=1,2$) satisfies the equation
\[
	\lambda [u_j, v] + \mathfrak{a}_{\beta_j}[u_j, v] = [f, v] + \int_{\partial \Omega} hv\, \dx \sigma
\]
for all $v\in H^1(\Omega)$. Subtracting these equations we find for a positive $v$ that
	\begin{align*}
		\lambda [u_2-u_1, v] + \mathfrak{a}[u_2-u_1,v]&=\int_{\partial\Omega}(\beta_1u_1-\beta_2u_2)v \, \dx\sigma \leq \int_{\partial\Omega} \beta_2(u_1-u_2)v\, \dx\sigma,
	\end{align*}
since $u_1 \geq 0$ by the above. Testing against $v\coloneqq (u_2-u_1)^+$, we find
	\begin{align*}
		& \lambda [(u_2-u_1)^+, (u_2-u_1)^+] + \mathfrak{a}[(u_2-u_1)^+, (u_2-u_1)^+]\\  \leq  & -\int_{\partial\Omega} \beta_2\big((u_2-u_1)^+\big)^2 \dx x\leq \|\beta_2\|_{L^\infty(\partial\Omega)} \int_{\partial\Omega} \big((u_2-u_1)^+\big)^2\dx\sigma.
	\end{align*}
Applying Lemma \ref{l.am} with $X_1= H^1(\Omega)$, $X_2= L^2(\Omega)$ and $X_3= L^2(\partial\Omega)$ where
$T: H^1(\Omega) \to L^2(\partial \Omega)$ is the trace operator (which is compact) and $S: H^1(\Omega) \to L^2(\Omega)$
is the natural embedding, given $\eps>0$ we find a constant $c>0$ such that
\begin{align*}
 & \|\beta_2\|_{L^\infty(\partial\Omega)} \int_{\partial\Omega} \big((u_2-u_1)^+\big)^2\dx\sigma\leq \eps \|(u_2-u_1)^+\|_{H^1(\Omega)}^2 + c \|(u_2-u_1)^+\|_{L^2(\Omega)}^2\\
 & \qquad = \eps \int_{\Omega} \big|\nabla (u_2-u_1)^+\big|^2\dx x + (c+\eps) \int_{\Omega} \big((u_2-u_1)^+\big)^2\dx x.
\end{align*}
Using the ellipticity of $\mathfrak{a}$ we deduce that, for a suitable constant $\alpha>0$, we have
\begin{align*}
&(\lambda +\alpha -\omega)\|(u_2-u_1)^+\|_{L^2(\Omega)}^2 +\alpha \int_{\Omega} \big|\nabla (u_2-u_1)^+\big|^2 \dx x\\ 
\leq & \eps \int_{\Omega} \big|\nabla (u_2-u_1)^+\big|^2 \dx x + (c+\eps)\|(u_2-u_1)^+\|_{L^2(\Omega)}^2
\end{align*}
Choosing $\eps = \alpha/2$ and $\lambda_0> \omega + c+\eps +1$, it follows that for $\lambda >\lambda_0$ we have
$(u_2-u_1)^+=0$, i.e.\ $u_2 \leq u_1$.
\end{proof}

Proposition \ref{p.domination} yields in particular the following monotonicity property.

\begin{cor}\label{c.monotone}
Assume Hypothesis \ref{hyp.coeff} and let $\beta_1, \beta_2 \in L^\infty(\Omega)$ be such that $\beta_1\leq \beta_2$. Then
$0\leq T_{\beta_2}^2(t) \leq T_{\beta_1}^2(t)$ for all $t\geq 0$.
\end{cor}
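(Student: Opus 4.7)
The plan is to reduce the semigroup estimate to a resolvent estimate and then invoke the Post--Widder inversion formula, just as in the proof of Corollary \ref{c.positive}. Positivity of each $T_{\beta_j}^2$ has already been established, so the only new point is the domination $T_{\beta_2}^2(t)\leq T_{\beta_1}^2(t)$.

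First I would specialize Proposition \ref{p.domination} to the case $h=0$. For $\lambda>\omega$ sufficiently large and $0\le f\in L^2(\Omega)$, the function $u_j\coloneqq R(\lambda,-\cA_{\beta_j}^2)f$ is by definition the unique weak solution of
\[
\lambda u_j+\cA u_j=f\ \text{on}\ \Omega,\qquad \partial_\nu^{\cA}u_j+\beta_j u_j=0\ \text{on}\ \partial\Omega.
\]
Proposition \ref{p.domination} then yields $0\le u_2\le u_1$, i.e.\ $0\le R(\lambda,-\cA_{\beta_2}^2)f\le R(\lambda,-\cA_{\beta_1}^2)f$ for all such $\lambda$ and all $0\le f\in L^2(\Omega)$. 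Equivalently, $0\le R(\lambda,-\cA_{\beta_2}^2)\le R(\lambda,-\cA_{\beta_1}^2)$ as operators on $L^2(\Omega)$ for all sufficiently large real $\lambda$.

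Next I would pass from the resolvents to the semigroups using the Post--Widder inversion formula \cite[Theorem 1.7.7]{abhn}: for every $f\in L^2(\Omega)$ and every $t>0$,
\[
T_{\beta_j}^2(t)f=\lim_{n\to\infty}\Big(\tfrac{n}{t}\Big)^n\!\Big[R\!\Big(\tfrac{n}{t},-\cA_{\beta_j}^2\Big)\Big]^{\!n}f\qquad(j=1,2),
\]
the limit being in $L^2(\Omega)$. For $n/t$ large enough the resolvent inequality above gives, by iteration and positivity,
\[
0\le \Big[R\Big(\tfrac{n}{t},-\cA_{\beta_2}^2\Big)\Big]^{n}f\le \Big[R\Big(\tfrac{n}{t},-\cA_{\beta_1}^2\Big)\Big]^{n}f
\]
for every $0\le f\in L^2(\Omega)$. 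Passing to the limit $n\to\infty$ and using that the positive cone of $L^2(\Omega)$ is closed, we obtain $0\le T_{\beta_2}^2(t)f\le T_{\beta_1}^2(t)f$, which is the claim.

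No real obstacle arises; the content of the corollary is already contained in Proposition \ref{p.domination}, and the only thing to check is that the iterated resolvent inequality survives the Post--Widder limit, which is immediate from closedness of the positive cone.
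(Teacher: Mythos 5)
Your proof is correct and follows essentially the same route as the paper: the paper also applies Proposition \ref{p.domination} with $h=0$ to obtain $0\leq(\lambda+\cA_{\beta_2}^2)^{-1}\leq(\lambda+\cA_{\beta_1}^2)^{-1}$ for large $\lambda$ and then passes to the semigroups via Euler's formula, which is the same limit of iterated resolvents you carry out with the Post--Widder formula. Your spelled-out iteration of the resolvent inequality and the closedness of the positive cone are exactly the details the paper leaves implicit.
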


\begin{proof}
Proposition \ref{p.domination} shows that for large $\lambda$ we have $0\leq (\lambda + \cA_{\beta_2}^2)^{-1}
\leq (\lambda + \cA_{\beta_1}^2)^{-1}$. This implies the claim in view of Euler's formula.
\end{proof}

For our next result, we assume again that $0\leq \beta$ as in Hypothesis \ref{h.mu}. Under this assumption, we will show that
 the semigroup $T^2_\beta$ on $L^2(\Omega)$ always leaves the space $L^\infty(\Omega)$ invariant, even if $T_\beta^2$ 
 is not sub-Markovian. This follows from Gaussian estimates for the semigroup $T^2_\beta$ which can be proved under the assumption that $0\leq \beta$. It seems to be unknown whether this is necessary for the Gaussian estimates. As a second
 consequence of the Gaussian estimates, we see that the restriction $T_\beta$ of $T^2_\beta$
to $L^\infty(\Omega)$ is a holomorphic semigroup, by which we mean that the $\CC$-linear extension of $T^2_\beta|_{L^\infty(\Omega)}$ to the complexification $L^\infty(\Omega; \CC)$ of $L^\infty(\Omega)$ is holomorphic.

Of course the generator of $T_\beta$ is the part $A_{\beta}$ of $-\cA_{\beta}^2$ in $L^\infty(\Omega)$, i.e.
\begin{align*}
D(A_\beta) & = \{ u \in H^1(\Omega)\cap L^\infty(\Omega) : \cA u \in L^\infty(\Omega),
\partial_\nu^\cA u +\beta u =0 \}\\
A_\beta u& = -\cA u.
\end{align*}
We will also see that the semigroup $T^2_\beta$ has leaves the space $C(\overline{\Omega})$ invariant and restrincts to a strongly continuous semigroup on that space. Naturally,
 the generator of $T^C_\beta$ is the part $A_{\beta}^C$ of $-\cA_{\beta}^2$ in $C(\overline{\Omega})$, i.e.
\begin{align*}
D(A_{\beta}^C) & = \{ u \in H^1(\Omega)\cap C(\overline{\Omega}) : \cA u \in C(\overline{\Omega}),
\partial_\nu^\cA u +\beta \tr u =0 \}\\
A_{\beta}^C u& = -\cA u.
\end{align*}
As a consequence of the strong continuity of $T^C_\beta$ we find that $D(A_{\beta}^C)$ is dense in $C(\overline{\Omega})$.

\begin{thm}
\label{thm:local-gen-res}
Assume Hypothesis \ref{hyp.coeff} and let $0\leq \beta \in L^\infty(\partial\Omega)$. Then $T^2_\beta$ leaves the space
$L^\infty(\Omega)$ invariant.
Its restriction $T_\beta$ to $L^\infty(\Omega)$ is a holomorphic semigroup on $L^\infty(\Omega)$. Each operator
$T_\beta(t)$, $t>0$, is compact and enjoys the strong Feller property. In particular, $C(\overline{\Omega})$ is invariant. The restriction
$T^C_\beta$ of $T^2_\beta$ to $C(\overline{\Omega})$ is a strongly continuous and holomorphic semigroup.
\end{thm}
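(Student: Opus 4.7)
The plan is to reduce every statement of the theorem to a Gaussian kernel estimate for $T^2_\beta$, combined with the elliptic regularity in Proposition \ref{prop:robin-reg}. Since $\beta \geq 0$, Corollary \ref{c.monotone} gives the domination $0 \le T^2_\beta(t) \le T^2_0(t)$, where $T^2_0$ is the semigroup associated with pure weak-conormal boundary conditions. For $T^2_0$ on a bounded Lipschitz domain with bounded measurable coefficients as in Hypothesis \ref{hyp.coeff}, Gaussian upper bounds are available from the literature (Davies' perturbation/Nash method, and results of Arendt-ter Elst and Daners that cover the Robin case). Domination transfers these bounds to $T^2_\beta$ and, by Stein-type arguments, to its holomorphic extension in a sector $\Sigma_\theta$. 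Hence $T^2_\beta$ is represented by a measurable kernel $k_t\in L^\infty(\Omega\times\Omega)$ satisfying bounds of the form $|k_t(x,y)|\le C t^{-d/2}\exp(\omega t - c|x-y|^2/t)$, uniformly on sectors.

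These kernel bounds extrapolate $T^2_\beta$ to a consistent family of bounded operators on every $L^p(\Omega)$ that is holomorphic in a sector. In particular, $L^\infty(\Omega)$ is invariant, and the $L^\infty$-realisation $T_\beta$ is a holomorphic semigroup in the sense of Section~\ref{s.prelim}. Its generator is the part $A_\beta$ of $-\cA^2_\beta$ in $L^\infty(\Omega)$ (identified in the statement preceding Theorem~\ref{thm:local-gen-res}).

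To obtain the strong Feller property, compactness and invariance of $C(\overline{\Omega})$ simultaneously, I would argue on the resolvent level. Fixing $q>d$ and $\lambda$ large, for $f\in L^\infty(\Omega)\subset L^{q/2}(\Omega)$ the function $u=R(\lambda,A_\beta)f$ is the unique weak solution of \eqref{eq.weak-sol} with $h=0$. Proposition \ref{prop:robin-reg} then yields $u\in C^\gamma(\overline{\Omega})$ with $\|u\|_{C^\gamma}\le C\|f\|_\infty$, so that $R(\lambda,A_\beta):L^\infty(\Omega)\to C(\overline{\Omega})$ is compact by Arzelà-Ascoli. Representing the semigroup by the Dunford integral
\[
T_\beta(t)=\frac{1}{2\pi i}\int_\Gamma e^{\lambda t}\,R(\lambda,A_\beta)\,\dx\lambda,
\]
the ideal property of compact operators gives that $T_\beta(t)$ is compact on $L^\infty(\Omega)$ and takes values in $C(\overline{\Omega})$, i.e.\ condition (a) of Definition \ref{def.sf}; condition (b) comes from dominated convergence applied to $R(\lambda,A_\beta)f(x)=\int_0^\infty e^{-\lambda t}\int_\Omega k_t(x,y)f(y)\dx y\,\dx t$ and is carried through the Dunford integral.

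For strong continuity of $T^C_\beta$ on $C(\overline{\Omega})$ it suffices, by Lemma \ref{l.sc}, to show $C(\overline{\Omega})\subset\overline{D(A_\beta)}$; the identification of the restricted generator with $A^C_\beta$ is then routine. Given $f\in C(\overline{\Omega})$, set $f_n\coloneqq n R(n,A_\beta)f\in D(A_\beta)$. The Gaussian bounds yield $\sup_n\|nR(n,A_\beta)\|_{\cL(L^\infty)}<\infty$, the approximation-of-identity property of the kernels $k_{1/n}$ gives $f_n\to f$ pointwise, and the uniform $C^\gamma$-estimate from Proposition \ref{prop:robin-reg} upgrades pointwise convergence to uniform convergence via Arzelà-Ascoli. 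I expect the main obstacle to be the very first step: establishing Gaussian estimates under only measurable coefficients, Lipschitz boundary, and $0\le\beta\in L^\infty(\partial\Omega)$ requires nontrivial work (ultimately relying on Nash/Davies or Daners-type techniques); once those are in hand, all remaining conclusions follow essentially formally from the regularity estimate in Proposition \ref{prop:robin-reg} and the abstract machinery of holomorphic semigroups.
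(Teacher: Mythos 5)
The first half of your plan is essentially the paper's: Gaussian estimates for $T^2_\beta$ (the paper quotes them directly for the Robin problem from Daners and Arendt--ter Elst, while you reduce to $\beta=0$ via Corollary \ref{c.monotone}, which is legitimate), extrapolation to a consistent family on all $L^q$, and holomorphy of the $L^\infty$-realisation. Your treatment of compactness, invariance of $C(\overline{\Omega})$ and condition (a) of Definition \ref{def.sf} via Proposition \ref{prop:robin-reg} is a workable alternative to the paper's citation of Nittka, with one repair: Proposition \ref{prop:robin-reg} is stated for real $\lambda>\omega$ only, so on a Dunford contour you must transport the mapping property by writing, e.g., $T_\beta(t)=R(\lambda_0,A_\beta)\,(\lambda_0-A_\beta)T_\beta(t)$ for one fixed real $\lambda_0>\omega$, or $R(\lambda,A_\beta)=R(\lambda_0,A_\beta)\bigl(I+(\lambda_0-\lambda)R(\lambda,A_\beta)\bigr)$; likewise, ``carrying (b) through the Dunford integral'' is not justified as stated, since you have no pointwise information about $R(\lambda,A_\beta)f_n(x)$ for complex $\lambda$ on the contour. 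The paper instead gets the strong Feller property from ultracontractivity plus the closed graph theorem ($T^2_\beta(t):L^q\to C(\overline{\Omega})$ bounded, and bounded a.e.\ convergent sequences converge in $L^q$ by dominated convergence); alternatively the reduction to the resolvent in Corollary \ref{c.strongfeller} applies verbatim.

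The genuine gap is your last step, the strong continuity of $T^C_\beta$. First, the ``approximation-of-identity property of the kernels'' is not a consequence of Gaussian upper bounds: the Dirichlet semigroup satisfies the same upper bounds, yet $T(t)f(x)\not\to f(x)$ at boundary points; upper bounds control the mass of $k_t(x,\cdot)$ away from $x$ but say nothing about the mass near $x$, and it is precisely the behaviour at $z\in\partial\Omega$ that is at stake here. Second, the Arzel\`a--Ascoli upgrade fails: the constant $C$ in Proposition \ref{prop:robin-reg} depends on $\lambda$, so there is no uniform $C^\gamma$ bound on $f_n=nR(n,A_\beta)f$; indeed no such bound can hold when $f\in C(\overline{\Omega})\setminus C^\gamma(\overline{\Omega})$, since a $C^\gamma$-bounded uniformly convergent sequence has a $C^\gamma$ limit. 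Finally, note that $nR(n,A_\beta)f\to f$ in the sup norm holds \emph{exactly} for $f\in\overline{D(A_\beta)}$ (Lemma \ref{l.sc} and sectoriality), which is the inclusion $C(\overline{\Omega})\subset\overline{D(A_\beta)}$ you are trying to prove, so some new analytic input at the boundary is unavoidable. The paper supplies it by invoking Nittka \cite{N11}, Theorem 4.3, whose proof of strong continuity on $C(\overline{\Omega})$ rests on elliptic regularity up to the boundary for the Robin problem; your argument needs this (or an equivalent substitute), and it does not follow formally from the Gaussian bounds and Proposition \ref{prop:robin-reg} alone.
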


\begin{proof}
It was proved in \cite[Corollary 6.1]{Dan00} (see also \cite[Theorem 4.9]{AtE97}) that the semigroup $T^2_\beta$ has Gaussian estimates so that $T^2_\beta$ extrapolates to a consistent family of semigroups $T^q_\beta$ on $L^q(\Omega)$ for $q \in [1,\infty]$. In particular,
$T^2_\beta$ leaves the space $L^\infty(\Omega)$ invariant and restricts to a semigroup $T_\beta$ on this space.
By \cite[Theorem 5.3]{AtE97} the semigroup $T_\beta$ is holomorphic on $L^\infty(\Omega)$. Moreover, by the proof of
\cite[Theorem 4.3]{N11} $T_\beta(t)L^\infty(\Omega) \subset C(\overline{\Omega})$ for all $t>0$. It was also seen in that theorem
that $T_\beta (t)$ is compact for all $t>0$. We now show that $T_\beta(t)$ is strongly Feller for $t>0$. Since $T^2_\beta$ is ultracontractive
by \cite[7.3 Criterion (v)]{A04} it follows that $T^2_\beta (t) L^q(\Omega) \subset L^\infty(\Omega)$ and hence $T^2_\beta
(t) L^q(\Omega)
\subset T^2_\beta(t/2)L^\infty(\Omega) \subset C(\overline{\Omega})$ for some $q \in (2, \infty)$. By the closed graph theorem, 
$T^2_\beta(t)$ is a bounded operator from $L^q(\Omega)$ to $C(\overline{\Omega})$. Now the strong Feller property, as defined in Definition \ref{def.sf}, follows from the dominated convergence theorem. It follows from \cite[Theorem 4.3]{N11} that
the restriction of the semigroup to $C(\overline{\Omega})$ is strongly continuous.
\end{proof}

\section{Non-local boundary conditions}\label{s.non}

We are now prepared to prove the main results of this article. We begin by setting up the framework in which we apply
Greiner's boundary perturbation. In contrast to the last section, in this section only consider complex Banach spaces in order to handle (possibly) complex valued functions $\mu : \partial \Omega \to \mathscr{M}(\overline{\Omega})$. 

We assume throughout Hypotheses \ref{h.mu} and \ref{hyp.coeff}. In particular, we assume throughout that
$0\leq \beta \in L^\infty(\partial\Omega)$. We then define
\[
D \coloneqq \{ u \in C(\overline{\Omega}) \cap H^1(\Omega) : 
\cA u \in L^\infty(\Omega), \partial_\nu^{\cA} u \in L^p(\partial \Omega)\},
\]
where $p>d-1$ is as in Hypothesis \ref{h.mu}(b).
Endowed with the norm
\[
\|u\|_D \coloneqq \|u\|_{C(\overline{\Omega})} + \|u\|_{H^1(\Omega)} + \| \cA u\|_{L^\infty(\Omega)} + \|\partial_{\nu}^{\cA} u\|_{L^p(\partial\Omega)}
\]
$D$ is a Banach space which is continuously embedded into $X= L^\infty(\Omega)$. Since $D(A_\beta^C) \subset D$, it follows from Theorem \ref{thm:local-gen-res} that
$D$ is dense in $C(\overline{\Omega})$.
We define our maximal operator $A : D \to X$
by $A u \coloneqq - \cA u$ which is linear and continuous. 
We set $\partial X \coloneqq L^p(\partial \Omega)$ and consider the boundary operator 
$B: D \to \partial \Omega$ defined via $B u = \partial_\nu^{\cA} u + \beta u$ where
$\beta$ is as in Hypothesis \ref{h.mu}. Finally, given $\mu$ as in Hypothesis \ref{h.mu}, 
the function $\Phi: \overline{D} \to \partial X$ is
given by
\[
(\Phi u)(z) \coloneqq \int_{\overline{\Omega}} u(x)\mu (z)(\dx x).
\]

Making use of the results of Section \ref{s.2} we can now prove our main generation result for the operator $A_{\beta, \mu}$, defined by
\begin{align*}
D(A_{\beta, \mu}) & = \big\{ u \in C(\overline{\Omega})\cap H^1(\Omega) : \cA u \in L^\infty(\Omega), 
\partial_\nu^{\cA} u  + \beta u = \langle u, \mu (\cdot)\rangle \big\}\\
A_{\beta, \mu} & = -\cA u.
\end{align*}

The following result contains Theorem \ref{t.laplace} from the introduction as a special case.

\begin{thm}\label{t.main}
Assume Hypotheses \ref{h.mu} and \ref{hyp.coeff}. Then the operator $A_{\beta, \mu}$ generates a holomorphic semigroup
$T_{\beta, \mu}$ on $L^\infty(\Omega)$ which satisfies the strong Feller property. In particular, it leaves the space $C(\overline{\Omega})$ invariant. Its restriction to this space is a strongly continuous and holomorphic semigroup whose generator is
$A_{\beta, \mu}^C$, the part of $A_{\beta, \mu}$ in $C(\overline{\Omega})$.
\end{thm}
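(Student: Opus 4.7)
The plan is to apply Greiner's boundary perturbation theorem (Theorem \ref{t.greiner}) together with Corollary \ref{c.strongfeller} in the framework set up just before the statement. This reduces the proof to verifying Hypothesis \ref{hyp.greiner} for $X = L^\infty(\Omega)$, $\partial X = L^p(\partial\Omega)$, the maximal operator $Au = -\cA u$, the boundary operator $Bu = \partial_\nu^\cA u + \beta u$, and the perturbation $\Phi u(z) = \langle u, \mu(z)\rangle$. Continuity of $A$ and $B$ on $D$ is built into the $D$-norm, and continuity of $\Phi \colon \overline{D} \to L^p(\partial\Omega)$ follows from the pointwise bound $|\Phi u(z)| \leq \|u\|_\infty \|\mu(z)\|$ together with Hypothesis \ref{h.mu}(b). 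Once Hypothesis \ref{hyp.greiner} is verified, Theorem \ref{t.greiner} produces a holomorphic semigroup generated by $A_\Phi$ on $L^\infty(\Omega)$ whose restriction to $\overline{D}$ is strongly continuous and holomorphic; Corollary \ref{c.strongfeller} then adds the strong Feller property, noting that $A_\beta$ itself generates a strong Feller semigroup by Theorem \ref{thm:local-gen-res}; and a comparison of domains identifies $A_\Phi$ with $A_{\beta,\mu}$ and the generator of the restriction with $A_{\beta,\mu}^C$.

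For condition (c) of Hypothesis \ref{hyp.greiner}, I will identify $A_0 \coloneqq A|_{\ker B}$ with the operator $A_\beta$ of Theorem \ref{thm:local-gen-res}. Since $T_\beta$ is holomorphic with $T_\beta(t) L^\infty(\Omega) \subset C(\overline{\Omega})$ for $t > 0$, Lemma \ref{l.sc} yields $D(A_\beta) \subset C(\overline{\Omega})$; combined with $\partial_\nu^\cA u = -\beta u \in L^\infty(\partial\Omega) \subset L^p(\partial\Omega)$ for $u \in D(A_\beta)$, this gives $D(A_\beta) \subset D$, whence $\ker B = D(A_\beta)$ and $A_0 = A_\beta$ generates a holomorphic semigroup on $L^\infty(\Omega)$. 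Moreover $D(A_\beta^C) \subset D(A_\beta)$ is sup-norm dense in $C(\overline{\Omega})$ by strong continuity of $T_\beta^C$, so $\overline{D(A_0)} = C(\overline{\Omega}) = \overline{D}$. For surjectivity of $B$ (condition (a)), given $h \in L^p(\partial\Omega)$ I fix $\lambda$ large, solve the weak Robin problem $\lambda u + \cA u = 0$, $\partial_\nu^\cA u + \beta u = h$ by Lax--Milgram, and apply Proposition \ref{prop:robin-reg} with $q = p+1 > d$ (available since $p > d-1$) to place $u$ in $C^\gamma(\overline{\Omega}) \cap H^1(\Omega)$; the remaining defining conditions of $D$ are then automatic.

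The main obstacle is condition (b): compactness of $\Phi\colon C(\overline{\Omega}) \to L^p(\partial\Omega)$, and this is where Hypothesis \ref{h.mu}(c) becomes decisive. Using the dominating measure $\tau$, write $\mu(z) = g(z,\cdot)\,\tau$ with $g(z,\cdot) \in L^1(\tau)$, so that $\Phi u(z) = \int_{\overline{\Omega}} u(x) g(z,x)\,\tau(\dx x)$. Given a bounded sequence $(u_n) \subset C(\overline{\Omega})$, view it as a bounded sequence in $L^\infty(\tau) = L^1(\tau)^*$ and extract by Banach--Alaoglu a weak$^*$ convergent subsequence $u_{n_k} \rightharpoonup v$ in $L^\infty(\tau)$. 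Then $\Phi u_{n_k}(z) \to \int_{\overline{\Omega}} v(x) g(z,x)\,\tau(\dx x)$ pointwise in $z \in \partial\Omega$. The pointwise bound $|\Phi u_{n_k}(z)| \leq \sup_k \|u_k\|_\infty \cdot \|\mu(z)\|$ has an $L^p(\partial\Omega)$-majorant by Hypothesis \ref{h.mu}(b), so dominated convergence upgrades pointwise convergence to $L^p(\partial\Omega)$-convergence, yielding compactness of $\Phi$. I expect this to be the technically delicate step, since it is the only point where all three parts of Hypothesis \ref{h.mu} enter in concert; in particular, without the Radon--Nikodym representation from (c) the pointwise convergence that makes dominated convergence applicable is lost.
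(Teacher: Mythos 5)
Your proposal is correct and follows essentially the same route as the paper: verification of Hypothesis \ref{hyp.greiner} with $A_0=A_\beta$ via Theorem \ref{thm:local-gen-res}, surjectivity of $B$ via Proposition \ref{prop:robin-reg}, compactness of $\Phi$ via Radon--Nikodym densities with respect to $\tau$, Banach--Alaoglu and dominated convergence using Hypothesis \ref{h.mu}(b), and then Theorem \ref{t.greiner} plus Corollary \ref{c.strongfeller}. The only detail worth adding is that the extraction of a weak$^*$ convergent subsequence in $L^\infty(\tau)$ uses the separability of $L^1(\tau)$, which holds since $\tau$ is a bounded Borel measure on the compact metric space $\overline{\Omega}$.
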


\begin{proof}
Noting that the operator $A_{\beta, \mu}$ is exactly the perturbed operator $A_\Phi$, where $A$ and $\Phi$ are as defined above,
the claim follows immediately from Theorem \ref{t.greiner} and Corollary \ref{c.strongfeller} once we verified that the maps $A, B$ and $\Phi$ satisfy Hypothesis 
\ref{hyp.greiner}.\smallskip

(a) The operator $B: D \to \partial X$ is surjective.

Fix $\lambda>\omega$. Given $h \in \partial X = L^p(\partial \Omega)$, it follows from Proposition \ref{prop:robin-reg} that the unique solution
$u\in H^1(\Omega)$ of the problem
\[
\begin{cases}
\lambda u + \cA u & = 0\\
\partial_{\nu}^{\cA} u + \beta u & = h
\end{cases}
\]
belongs to $C(\overline{\Omega})$. Moreover, $\cA u = -\lambda u \in C(\overline{\Omega})\subset L^\infty(\Omega)$. Thus,
$u \in D$ and $B u = h$, proving that $B$ is surjective.\medskip

(b) The boundary map $\Phi$ is compact.

Let $(u_n)_{n\in \CN}$ be a bounded sequence in $C(\overline{\Omega})$, say $\|u_n\|_{C(\overline{\Omega})} \leq M$
for all $n\in \CN$. Since $\mu (z) \ll \tau$ by Hypothesis \ref{h.mu}(c), for every $z \in \partial \Omega$ we find a Radon--Nikodym density $\varphi_z \in L^1(\overline{\Omega}, \tau)$ of $\mu(z)$ with respect to $\tau$, i.e.\ we have
\[
\int_{\overline{\Omega}} f(x)\,  \mu (z)(\mathrm{d} x) = \int_{\overline{\Omega}} f \varphi_z\, \dx\tau
\]
for all $f \in C(\overline{\Omega})$. In particular, $(\Phi u_n)(z) = \langle u_n, \varphi_z \rangle_{L^\infty (\tau), L^1(\tau)}$. 
Since the sequence $u_n$ is bounded in $L^\infty(\tau)$ and $L^1(\tau)$ is separable, it follows from the Banach--Alaoglu theorem
that we find a weak$^*$-convergent subsequence, say $u_{n_k} \rightharpoonup^* u$ for some $u \in L^\infty (\tau)$. In particular,
\[
(\Phi u_{n_k})(z) = \int_{\overline{\Omega}} u_{n_k} \varphi_z\,\dx\tau \to \int_{\overline{\Omega}} u\varphi_z\,\dx\tau
\]
for all $z \in \partial \Omega$, i.e.\ $\Phi u_n$ has a subsequence which converges pointwise.
Note that we have
\[
|(\Phi u_n)(z)| \leq M \|\mu (z)\|. 
\]
As a consequence of Hypothesis \ref{h.mu}(b) the functions $\Phi u_n$ have a $p$-integrable majorant and it follows from the dominated convergence theorem that $\Phi u_n$ has a subsequence which converges in $L^p(\partial \Omega)$.\medskip

(c) The operator $A_0$ is exactly the part of $-\cA_{\beta}^2$ in $L^\infty(\Omega)$. It follows from Theorem \ref{thm:local-gen-res}
that $A_0$ generates an holomorphic semigroup on $X=L^\infty(\Omega)$ which enjoys the strong Feller property and whose domain
is dense in $C(\overline{\Omega})$.
\end{proof}

We next prove some additional properties of the semigroup $T_{\beta, \mu}$ making use of the corollaries to Theorem \ref{t.greiner}.

\begin{prop}\label{p.properties}
Assume Hypotheses \ref{h.mu} and \ref{hyp.coeff} and let $T_{\beta, \mu}$ be the semigroup generated
by $A_{\beta, \mu}$ according to Theorem \ref{t.main}.
\begin{enumerate}
[(a)]
\item $T_{\beta, \mu}$ is compact.
\item If $\mu(z)$ is a positive measure for almost every $z\in \partial \Omega$, then the semigroup $T_{\beta, \mu}$ is positive.
\end{enumerate}
\end{prop}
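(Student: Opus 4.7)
Both parts will be obtained by feeding the setup of Theorem \ref{t.main} into the corollaries of Greiner's perturbation theorem from Section \ref{s.2}. Recall that in the framework used to prove Theorem \ref{t.main}, the unperturbed operator $A_0$ is exactly the part of $-\cA_{\beta}^2$ in $L^\infty(\Omega)$, i.e.\ the generator of $T_\beta$ of Theorem \ref{thm:local-gen-res}, and the boundary perturbation is $\Phi u(\cdot)=\langle u,\mu(\cdot)\rangle$.

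For part (a), the plan is to apply Corollary \ref{c.compact}. By Theorem \ref{thm:local-gen-res}, the semigroup $T_\beta$ is holomorphic on $L^\infty(\Omega)$ and each operator $T_\beta(t)$ is compact for $t>0$. Since $T_\beta$ is holomorphic, compactness of $T_\beta(t)$ is equivalent to compactness of the resolvent $R(\lambda,A_0)$, so $A_0$ has compact resolvent. Corollary \ref{c.compact} then shows that $A_{\beta,\mu}$ has compact resolvent as well. Since $T_{\beta,\mu}$ is holomorphic by Theorem \ref{t.main}, compactness of its resolvent yields compactness of $T_{\beta,\mu}(t)$ for every $t>0$.

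For part (b), the plan is to apply Corollary \ref{c.positive} to the real ordered Banach space $L^\infty(\Omega)$ (with its pointwise a.e.\ order) and $\partial X=L^p(\partial\Omega)$. Three items need to be checked: first, $A_0$ generates a positive semigroup on $L^\infty(\Omega)$; this is immediate from Theorem \ref{thm:local-gen-res}, since $T_\beta^2$ is positive (by the Beurling--Deny--Ouhabaz criterion, as noted in the proof of the generation result for $\cA_\beta^2$) and the consistent family $T_\beta^q$ obtained from Gaussian estimates inherits positivity. Second, $\Phi$ is positive: if $u\in C(\overline{\Omega})$ with $u\geq 0$ then, for almost every $z\in\partial\Omega$, $(\Phi u)(z)=\int_{\overline{\Omega}} u(x)\,\mu(z)(\mathrm{d}x)\geq 0$ because $\mu(z)$ is a positive measure. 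Third, for all sufficiently large real $\lambda$, the solution operator $S_\lambda:L^p(\partial\Omega)\to D$ associated with the Robin boundary value problem \eqref{eq.weak-sol} (with $f=0$) is positive. This positivity is precisely the first half of the argument given in the proof of Proposition \ref{p.domination}: for $h\geq 0$ the unique solution $u$ of $\lambda u+\cA u=0$, $\partial_\nu^{\cA}u+\beta u=h$ is $\geq 0$, since testing against $v=u^-$ (or $-u^+$ depending on sign conventions) combined with the locality and ellipticity of $\mathfrak{a}_\beta+\omega$ forces $u^-\equiv 0$. Hence $S_\lambda\Phi\geq 0$ for large $\lambda$, and Corollary \ref{c.positive} yields positivity of $T_{\beta,\mu}$.

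\textbf{Main obstacle.} The only step that is not essentially bookkeeping is the positivity of $S_\lambda$ at the $L^p(\partial\Omega)$-level, because Proposition \ref{p.domination} is stated for $f\in L^2(\Omega)$ and $h\in L^2(\partial\Omega)$ while here we need to allow $h\in L^p(\partial\Omega)$. However, the required solutions land in $D\subset C(\overline{\Omega})$ by Proposition \ref{prop:robin-reg}, so $S_\lambda h$ makes sense and the variational positivity argument from Proposition \ref{p.domination} transfers verbatim, since $L^p(\partial\Omega)\hookrightarrow L^2(\partial\Omega)$ (we may assume $\sigma(\partial\Omega)<\infty$). Everything else is a direct invocation of Corollaries \ref{c.compact} and \ref{c.positive}.
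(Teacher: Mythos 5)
Your proposal is correct and follows essentially the same route as the paper: part (a) is Corollary \ref{c.compact} combined with the compactness of $T_\beta(t)$ from Theorem \ref{thm:local-gen-res} (the passage between compact resolvent and compact semigroup via holomorphy being implicit in the paper), and part (b) is Corollary \ref{c.positive} with positivity of $\Phi$ from the positivity of the measures $\mu(z)$ and positivity of $S_\lambda$ from Proposition \ref{p.domination}. Your remark that $L^p(\partial\Omega)\hookrightarrow L^2(\partial\Omega)$ (since $p\geq 2$ and $\sigma$ is finite) so that the variational positivity argument applies to $h\in L^p(\partial\Omega)$ is a detail the paper leaves tacit, but it does not change the argument.
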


\begin{proof}
(a) Follows immediately from Corollary \ref{c.compact}, noting that the semigroup generated by $A_0$ is compact as a consequence
of Theorem \ref{thm:local-gen-res}.\smallskip

(b) By Theorem \ref{thm:local-gen-res}, the semigroup generated by $A_0$ is positive.
If $\mu (z)$ is positive for almost every $z\in \partial \Omega$, then the map $\Phi$ is positive. Note that for the solution map
$S_\lambda$ the function $S_\lambda h$ is the unique solution of the boundary value problem
\[
\begin{cases}
\lambda u + \cA u & = 0\\
\partial_\nu^{\cA} u + \beta u & = h.
\end{cases}
\]
Thus, by Proposition \ref{p.domination}, $S_\lambda$ is positive for $\lambda >\omega$. Altogether $S_\lambda \Phi$ is positive
and it follows from Corollary \ref{c.positive} that $T_{\beta, \mu}$ is positive.
\end{proof}

Next we characterize when $T_{\beta, \mu}$ is Markovian.

\begin{prop}\label{p.further}
Assume in addition to Hypotheses \ref{h.mu} and \ref{hyp.coeff} 
that $\mu (z)$ is a positive measure for almost every  $z\in \partial\Omega$.
The following are equivalent.
\begin{enumerate}
[(i)]
\item The semigroup $T_{\beta, \mu}$ is Markovian.
\item  We have
\begin{equation}
\label{eq.mubetaM1}
\sum_{j=1}^d D_j b_j = d_0 \qquad \mbox{almost everywhere on $\Omega$ and}
\end{equation}
\begin{equation}
\label{eq.mubetaM2}
\mu(z)(\overline{\Omega}) = \beta(z) + \sum_{j=1}^d \nu_j(z) b_j(z) \qquad \mbox{ for almost all $z\in \partial\Omega$}.
\end{equation}
\end{enumerate}
\end{prop}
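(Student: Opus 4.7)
My plan is to characterize the Markov property through the condition $T_{\beta,\mu}(t)\one = \one$ and then translate that into the stated conditions on the coefficients.

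First, observe that by Proposition \ref{p.properties}(b) the semigroup $T_{\beta,\mu}$ is already positive under our assumption on $\mu$, so the Markov property reduces to the identity $T_{\beta,\mu}(t)\one = \one$ for all $t>0$. I would then use the following standard equivalence: since $\one\in C(\overline{\Omega})$ and the restriction $T_{\beta,\mu}^C$ to $C(\overline{\Omega})$ is a strongly continuous holomorphic semigroup with generator $A_{\beta,\mu}^C$ (Theorem \ref{t.main}), we have $T_{\beta,\mu}(t)\one = \one$ for all $t>0$ if and only if $\one \in D(A_{\beta,\mu}^C)$ and $A_{\beta,\mu}^C\one = 0$. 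This reduces the problem to a purely static characterization of membership of $\one$ in the domain.

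Next, I would compute $\cA\one$ and $\partial_\nu^{\cA}\one$ explicitly. Because the coefficients $b_j$ are implicitly assumed to lie in $W^{1,\infty}(\Omega)$ (so that \eqref{eq.mubetaM1} even makes sense), a direct computation yields $\cA\one = -\sum_{j=1}^d D_jb_j + d_0$, and an integration by parts combined with the definition of the weak conormal derivative (as recalled in Section \ref{s.local}) gives $\partial_\nu^{\cA}\one = \sum_{j=1}^d \tr(b_j)\nu_j$. Writing out the defining conditions of $D(A_{\beta,\mu}^C)$ for $u=\one$, the requirement $\cA\one\in L^\infty(\Omega)$ together with $A_{\beta,\mu}^C\one = -\cA\one = 0$ is exactly \eqref{eq.mubetaM1}, while the nonlocal Robin condition
\[
\partial_\nu^{\cA}\one + \beta\one = \langle \one, \mu(\cdot)\rangle = \mu(\cdot)(\overline{\Omega})
\]
becomes precisely \eqref{eq.mubetaM2}. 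Conversely, if \eqref{eq.mubetaM1} and \eqref{eq.mubetaM2} hold, then $\one\in D(A_{\beta,\mu}^C)$ with $A_{\beta,\mu}^C\one=0$, and hence $T_{\beta,\mu}(t)\one=\one$.

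The main technical point to be careful about is the formula $\partial_\nu^{\cA}\one = \sum_j \tr(b_j)\nu_j$ in our low-regularity setting; once this is justified via Green's formula applied to $v\in H^1(\Omega)$ (as sketched in Section \ref{s.local}), everything else is algebra. The rest is a standard reduction from Markovianity to the equation $A\one = 0$, which works thanks to the strong continuity of $T_{\beta,\mu}^C$ on $C(\overline{\Omega})$.
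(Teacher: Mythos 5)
Your proposal is correct and follows essentially the same route as the paper: positivity reduces Markovianity to $\one\in\ker A_{\beta,\mu}$, the computation $-\cA\one=\sum_j D_jb_j-d_0$ gives \eqref{eq.mubetaM1}, and an integration by parts identifies the boundary condition for $\one$ (equivalently $\partial_\nu^{\cA}\one=\sum_j\tr(b_j)\nu_j$) with \eqref{eq.mubetaM2}. The paper phrases the last step as an integral identity tested against all $v\in H^1(\Omega)$ rather than quoting the explicit formula for $\partial_\nu^{\cA}\one$, but this is only a cosmetic difference.
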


\begin{proof}
 Since $T_{\beta, \mu}$ is positive, (i) is equivalent to $\one \in \ker A_{\beta, \mu}$. Observe that $-\cA \one = \sum_{j=1}^d D_j b_j - d_0$. Thus $-\cA\one =0$ if and only if \eqref{eq.mubetaM1} holds. In that case, integration by parts yields for $v\in H^1(\Omega)$
 that
 \[
 \mathfrak{a}[\one, v] -[\cA \one, v] = \sum_{j=1}^d \int_\Omega b_jD_j v +d_0 v\dx x = \sum_{j=1}^d \int_{\partial\Omega}
 b_j \nu_j v\, \dx \sigma .
 \]
 Thus $\one \in D(A_{\beta, \mu})$ if and only if
 \[
 \sum_{j=1}^d\int_{\partial\Omega} b_j(z)\nu_j(z)v(z)\, \dx\sigma(z) = \int_{\partial\Omega}\big(-\beta (z) +\langle \mu (z), \one\rangle\big)
 v(z)\,\dx\sigma 
 \]
 for all $v\in H^1(\Omega)$. This is equivalent to \eqref{eq.mubetaM2}.
\end{proof} 

If we merely have inequalities in \eqref{eq.mubetaM1} and \eqref{eq.mubetaM2}, then the semigroup is sub-Markovian 
as we show next. In the proof, we use the following monotonicity result.

\begin{prop}\label{p.monotone}
Assume Hypothesis \ref{hyp.coeff} and let $\beta_1, \beta_2 \in L^\infty(\partial \Omega)$ 
with $\beta_2 \leq \beta_1$. Moreover, let functions
$\mu_1, \mu_2 : \partial \Omega \to \mathscr{M}(\overline{\Omega})$ be given such that $0\leq \mu_1(z) \leq \mu_2(z)$
for almost all $z\in \partial\Omega$ and such that $\mu_1, \mu_2$ satisfy Hypothesis \ref{h.mu} with the same $p$.
Then
\[
0 \leq T_{\beta_1, \mu_1}(t) \leq T_{\beta_2, \mu_2}(t)
\]
for all $t \geq 0$. 
\end{prop}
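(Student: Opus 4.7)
The plan is to read both operators $A_{\beta_1,\mu_1}$ and $A_{\beta_2,\mu_2}$ as Greiner perturbations of the same maximal operator $A = -\cA$ on the same data space $D$, with different boundary operators $B_j u := \partial_\nu^{\cA} u + \beta_j u$ and boundary perturbations $\Phi_j u(z) := \langle u,\mu_j(z)\rangle$, and then to invoke the comparison Corollary \ref{c.compare}. The proof of Theorem \ref{t.main} shows that each triple $(A, B_j, \Phi_j)$ fits into Hypothesis \ref{hyp.greiner}, so it suffices to check conditions (a)--(d) of Corollary \ref{c.compare} with the indices arranged so that $T_1 \leftrightarrow T_{\beta_1,\mu_1}$ (the expected smaller semigroup) and $T_2 \leftrightarrow T_{\beta_2,\mu_2}$ (the expected larger one).

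Conditions (a), (b) and (d) are essentially immediate. Condition (a) holds because the unperturbed operator $A_0^j$ is the $L^\infty$-realisation of the local Robin operator, whose semigroup Theorem \ref{thm:local-gen-res} shows to be positive. Condition (b), $0 \leq \Phi_1 \leq \Phi_2$, is just the hypothesis $0 \leq \mu_1(z) \leq \mu_2(z)$ restated in terms of the boundary perturbation. Condition (d) reduces to $B_1 u - B_2 u = (\beta_1 - \beta_2) u \geq 0$ for every positive $u\in D$, using $\beta_2 \leq \beta_1$.

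The one step that requires actual work is condition (c), namely $0 \leq S_\lambda^1 \leq S_\lambda^2$ for sufficiently large real $\lambda$. For $0 \leq h \in L^p(\partial\Omega) \subset L^2(\partial\Omega)$ the function $S_\lambda^j h$ is, by construction, the unique solution of the boundary value problem \eqref{eq.weak-sol} with $f = 0$ and boundary datum $h$, for the Robin coefficient $\beta_j$. Since $\beta_2 \leq \beta_1$, Proposition \ref{p.domination}, applied with its indices relabelled so that its $\beta_1$ is our $\beta_2$ and its $\beta_2$ is our $\beta_1$, yields exactly $0 \leq S_\lambda^1 h \leq S_\lambda^2 h$ as soon as $\lambda$ exceeds the threshold furnished there. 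With all four hypotheses of Corollary \ref{c.compare} verified, the inequality $0 \leq T_{\beta_1,\mu_1}(t) \leq T_{\beta_2,\mu_2}(t)$ for $t>0$ follows, and at $t=0$ both sides are the identity.

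I do not foresee a genuine obstacle: the essential analytic content (the $H^1$-truncation argument used to produce the comparison principle for the local Robin problem) is already packaged inside Proposition \ref{p.domination}, and Corollary \ref{c.compare} is designed precisely to convert such a resolvent comparison into a semigroup comparison via Post--Widder inversion. The only bookkeeping points are the reversed indexing convention in Proposition \ref{p.domination} and the inclusion $L^p(\partial\Omega) \hookrightarrow L^2(\partial\Omega)$, which is valid since $p \geq 2$ and $\partial\Omega$ carries a finite surface measure; these ensure that the resolvent domination holds on all of $\partial X = L^p(\partial\Omega)$, as required by the Greiner framework.
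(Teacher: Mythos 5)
Your argument is correct and is essentially the paper's own proof: both realise the two operators as Greiner perturbations of the same maximal operator $A=-\cA$ with boundary maps $B_j u=\partial_\nu^{\cA}u+\beta_j u$ and $\Phi_j u=\langle u,\mu_j(\cdot)\rangle$, obtain $0\leq S_\lambda^1\leq S_\lambda^2$ from Proposition \ref{p.domination}, and conclude via Corollary \ref{c.compare}. Your extra remarks (the index relabelling in Proposition \ref{p.domination} and the embedding $L^p(\partial\Omega)\hookrightarrow L^2(\partial\Omega)$) are just the bookkeeping the paper leaves implicit.
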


\begin{proof}
The semigroups $T_{\beta_1, \mu_1}$ and $T_{\beta_2, \mu_2}$ are obtained from the same maximal operator $A$ but using
different boundary perturbations $\Phi_j \colon u \mapsto  \langle \mu_j (\cdot), u\rangle$ and boundary operators $B_j \colon 
u \mapsto \partial_\nu^\cA u + \beta_j u$. We clearly have $B_2 u \leq B_1 u$ and $0\leq \Phi_1 u \leq \Phi_2 u$ for $u\geq 0$. Moreover, if we write
$S_\lambda^j \coloneqq (B_j|_{\ker (\lambda - A)})^{-1}$, then we have $S_\lambda^1 \leq S_\lambda^2$ by Proposition \ref{p.domination}. Thus Corollary \ref{c.compare} yields the claim. 
\end{proof}

\begin{prop}\label{p.evenmore}
Assume in addition to Hypotheses \ref{h.mu} and \ref{hyp.coeff} that $\mu(z)$ is positive for almost all $z \in \partial\Omega$
and that $b_j \in W^{1,\infty}(\Omega)$ for $j=1, \ldots, d$. If
\begin{equation}
\label{eq.mubetasM1}
\sum_{j=1}^d D_j b_j \leq d_0 \qquad \mbox{almost everywhere on $\Omega$ and }
\end{equation}
\begin{equation}
\label{eq.mubetasM2}
\mu (z)(\overline{\Omega}) \leq \beta (z) + \sum_{j=1}^d \tr (b_j)(z)\nu_j (z) \qquad \mbox{for almost all $z\in \partial\Omega$}
\end{equation}
then the semigroup $T_{\beta,\mu}$ is sub-Markovian.
\end{prop}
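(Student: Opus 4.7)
The plan is to reduce the claim to the Markovian case of Proposition \ref{p.further} through two separate monotonicity arguments: first I would absorb the interior gap in \eqref{eq.mubetasM1} into a non-negative potential perturbation of the differential operator, and then I would enlarge $\mu$ to close the boundary gap in \eqref{eq.mubetasM2}. Positivity of $T_{\beta,\mu}$ is already supplied by Proposition \ref{p.properties}(b), so it suffices to prove $T_{\beta,\mu}(t)\one \leq \one$.

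\emph{Step one: the interior gap.} I would set $\tilde d_0 := \sum_{j=1}^d D_j b_j$, which lies in $L^\infty(\Omega)$ since $b_j \in W^{1,\infty}(\Omega)$, and let $\cA'$ be the operator obtained from $\cA$ by replacing $d_0$ with $\tilde d_0$. Write $A'_{\beta,\mu}$ for its non-local Robin realization and $T'_{\beta,\mu}$ for the resulting semigroup. Since the weak conormal derivative is defined in terms of the form $\mathfrak{a}$ and $\cA u$ only through their difference, and the substitution $d_0 \to \tilde d_0$ changes each of these by the same amount $\int g u v\, \dx x$ with $g := d_0 - \tilde d_0$, one has $\partial_\nu^{\cA'} = \partial_\nu^{\cA}$; consequently the domains of $A_{\beta,\mu}$ and $A'_{\beta,\mu}$ coincide and
\[
A_{\beta,\mu} = A'_{\beta,\mu} - g, \qquad g \in L^\infty(\Omega), \quad g \geq 0
\]
by \eqref{eq.mubetasM1}. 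Both generators have positive resolvents for large $\lambda$ by Proposition \ref{p.properties}(b) applied to $\cA$ and $\cA'$, so the standard resolvent identity
\[
R(\lambda, A_{\beta,\mu}) - R(\lambda, A'_{\beta,\mu}) = -R(\lambda, A_{\beta,\mu})\, g\, R(\lambda, A'_{\beta,\mu})
\]
is a non-positive operator, and the Post--Widder formula gives $T_{\beta,\mu}(t) \leq T'_{\beta,\mu}(t)$.

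\emph{Step two: the boundary gap.} I would fix any probability measure $\rho$ on $\overline{\Omega}$, set $\eta(z) := \beta(z) + \sum_j \tr(b_j)(z)\nu_j(z) - \mu(z)(\overline{\Omega}) \geq 0$ by \eqref{eq.mubetasM2}, and define $\tilde\mu(z) := \mu(z) + \eta(z)\rho$. A brief check shows $\tilde\mu$ meets Hypothesis \ref{h.mu}: part (a) follows because $\eta$ is measurable, (b) from $|\eta(z)| \leq C + \|\mu(z)\|$ together with Hypothesis \ref{h.mu}(b) for $\mu$, and (c) with the dominating measure $\tau + \rho$. Since $\tilde\mu(z) \geq \mu(z) \geq 0$, Proposition \ref{p.monotone} applied to the operator $\cA'$ yields $T'_{\beta,\mu}(t) \leq T'_{\beta,\tilde\mu}(t)$. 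By construction $\tilde\mu(z)(\overline{\Omega}) = \beta(z) + \sum_j \tr(b_j)(z)\nu_j(z)$ and $\sum_j D_j b_j = \tilde d_0$, so both conditions of Proposition \ref{p.further} (applied to $\cA'$) hold and $T'_{\beta,\tilde\mu}(t)\one = \one$. Chaining the comparisons,
\[
T_{\beta,\mu}(t)\one \leq T'_{\beta,\mu}(t)\one \leq T'_{\beta,\tilde\mu}(t)\one = \one.
\]

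The main obstacle will be that Proposition \ref{p.monotone} is stated for a fixed differential operator, so the interior gap cannot be rolled into the same monotonicity statement as the boundary gap; this forces the two-stage split and the independent resolvent-perturbation argument in step one. The remaining verifications—that $\tilde\mu$ inherits Hypothesis \ref{h.mu}, that Hypothesis \ref{hyp.coeff} still holds for $\cA'$, and that the domains of $A_{\beta,\mu}$ and $A'_{\beta,\mu}$ in fact coincide—are routine but essential for the comparison chain to close.
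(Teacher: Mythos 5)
Your argument is correct, and its skeleton is the same as the paper's: reduce to the Markovian case of Proposition \ref{p.further} via the domination result of Proposition \ref{p.monotone}, and absorb the interior gap $d_0-\sum_j D_jb_j\geq 0$ as a bounded positive zero-order perturbation whose resolvents compare (your explicit resolvent identity is just a hands-on version of the citation of \cite[Proposition 3.11.12]{abhn} used in the paper, and Post--Widder finishes both ways). The genuine difference is how the boundary gap is closed: the paper keeps $\mu$ and \emph{lowers} the local part, replacing $\beta$ by $\beta_0(z)=\mu(z)(\overline{\Omega})-\sum_j \tr b_j(z)\nu_j(z)$ so that equality \eqref{eq.mubetaM2} holds, whereas you keep $\beta$ and \emph{raise} the non-local part to $\tilde\mu(z)=\mu(z)+\eta(z)\rho$. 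Your variant buys something concrete: $\tilde\mu$ stays within Hypothesis \ref{h.mu} with the original $0\leq\beta$ unchanged (and your verification of (a)--(c) for $\tilde\mu$, with dominating measure $\tau+\rho$, is complete), whereas the paper's $\beta_0$ need not be nonnegative when $\sum_j\tr b_j\nu_j>\mu(\cdot)(\overline{\Omega})$ on part of the boundary, so applying Propositions \ref{p.further} and \ref{p.monotone} there silently uses the theory beyond the standing sign assumption on $\beta$; your route avoids that subtlety at the modest cost of checking Hypothesis \ref{h.mu} for the enlarged $\tilde\mu$. The order in which you treat the two gaps (interior first, boundary second, versus the paper's reverse order) is immaterial, and your observation that the weak conormal derivative, hence $D(A_{\beta,\mu})=D(A'_{\beta,\mu})$, is unaffected by changing $d_0$ is exactly the point needed to write $A_{\beta,\mu}=A'_{\beta,\mu}-g$.
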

\begin{proof}
Assume at first that $\sum_{j=1}^d D_j b_j = d_0$. Let us define  $\beta_0(z) \coloneqq \mu (z)(\overline{\Omega})-\sum_{j=1}^d \tr b_j (z) \nu_j(z)$. 
By Proposition \ref{p.further} the semigroup $T_{\beta_0, \mu}$ is Markovian. As a consequence of Proposition \ref{p.monotone} 
we have $0 \leq T_{\beta, \mu}(t) \leq T_{\beta_0, \mu}(t)$ for all $t>0$ which clearly implies that $T_{\beta, \mu}$ is sub-Markovian. That $T_{\beta, \mu}$ is still sub-Markovian when $\sum_{j=1}^d D_j b_j \leq d_0$ 
follows from a standard perturbation result:

Denote by $\tilde A_{\beta, \mu}$ the operator where $d_0$ is replaced by $\tilde d_0 \coloneqq \sum_{j=1}^d D_j b_j$. Then the semigroup 
$\tilde T_{\beta, \mu}$ generated by $\tilde A_{\beta, \mu}$ is sub-Markovian by what has been proved so far. Note that
$A_{\beta, \mu} + (d_0- \tilde d_0) = \tilde A_{\beta, \mu}$, so that $\tilde A_{\beta, \mu}$ is a bounded  and positive perturbation of $A_{\beta, \mu}$. Using a perturbation result for resolvent positive operators \cite[Proposition 3.11.12]{abhn} we find that 
$R(\lambda, A_{\beta, \mu}) \leq R(\lambda, \tilde{A}_{\beta, \mu})$ for large enough $\lambda$ and the domination of the semigroups
follows from the Post--Widder inversion formula \cite[Theorem 1.7.7]{abhn}. Alternatively, the domination property can be inferred from
the Dyson--Phillips formula for the perturbed semigroup, see \cite[Example 3.4]{k13a} for a version which covers our setting.
\end{proof}

As a further consequence of Proposition \ref{p.monotone} we have 
\begin{equation}\label{eq.monotone}
 0\leq T_{\beta, 0}(t) \leq T_{\beta, \mu}(t)
\end{equation}
for all $t>0$  in the case where $\mu(z)$ is a positive measure for almost every $z\in \partial \Omega$. 
We note that for $\mu \equiv 0$ we have $T_{\beta, 0}(t) = T_\beta (t)$, where $T_\beta$ is the semigroup on 
$L^\infty(\Omega)$, defined in Section \ref{s.local} for local Robin boundary conditions.
It thus follows from Proposition \ref{p.submarkov} that condition 
\eqref{eq.mubetasM1} is necessary for $T_{\beta, \mu}$ to be sub-Markovian. It seems not so easy to show that also condition 
\eqref{eq.mubetasM2} is necessary for this. Also concerning the positivity of the semigroup $T_{\beta, \mu}$ it seems unclear if the condition that $\mu (z)$ is a positive measure for almost every $z \in \partial \Omega$ is necessary.
However, in Section \ref{s.6} we will give a proof of necessity in the special case where every measure
$\mu (z)$ is absolutely continuous with respect to the Lebesgue measure.

\section{Asymptotic behavior}

Throughout this section we assume Hypotheses \ref{h.mu} and \ref{hyp.coeff} so that $T_{\beta, \mu}$ is a semigroup on $L^\infty(\Omega)$. It is our aim to describe its asymptotic behavior as $t\to \infty$. Since $T_{\beta,\mu}(t)L^\infty(\Omega)
\subset C(\overline{\Omega})$ for all $t>0$ it suffices to study $T_{\beta, \mu}^C$, the restriction to $C(\overline{\Omega})$, which is a strongly continuous semigroup. We also assume throughout that $\mu (z) \geq 0$ for almost all $z\in \partial \Omega$ so that the semigroup is positive.

For the definition of spectral bound and irreducibility we refer to Appendix \ref{appendix}.
The asymptotic behavior of $T_{\beta, \mu}^C$ is determined by the spectral bound $s(A_{\beta, \mu}^C)$ of its generator (see Appendix \ref{appendix}). 
We first show that the spectrum is not empty.

\begin{prop}\label{p.nonempty}
Assume that $\mu (z)\geq 0$ for almost all $z\in \partial \Omega$. Then $s(A_{\beta, \mu}^C) > -\infty$. Moreover, $s(A_{\beta, \mu}^C)$ is an eigenvalue of $A_{\beta,\mu}^C$ with positive eigenfunction.
\end{prop}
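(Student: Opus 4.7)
The plan is to apply a Perron--Frobenius--Krein--Rutman type argument to the positive semigroup $T_{\beta,\mu}^C$, exploiting the compactness of its resolvent (Corollary \ref{c.compact}) together with the general results on the asymptotic behavior of positive semigroups collected in the appendix. Such results yield that, provided the spectrum $\sigma(A_{\beta,\mu}^C)$ is non-empty, the spectral bound $s(A_{\beta,\mu}^C)$ belongs to $\sigma(A_{\beta,\mu}^C)$ and is in fact an eigenvalue with a positive eigenfunction. Therefore the task reduces to verifying that $\sigma(A_{\beta,\mu}^C) \neq \emptyset$, equivalently that the compact positive operator $R_0 \coloneqq R(\lambda_0, A_{\beta,\mu}^C)$ has strictly positive spectral radius for some (hence every) $\lambda_0$ in the resolvent set.

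To show $r(R_0) > 0$, I would invoke the monotonicity of Proposition \ref{p.monotone} with $\mu_1 = 0$, $\mu_2 = \mu$, yielding $T_\beta^C(t) \leq T_{\beta,\mu}^C(t)$ and hence, after integration against $e^{-\lambda_0 t}$, the inequality $0 \leq R_1 \coloneqq R(\lambda_0, A_\beta^C) \leq R_0$. For positive operators on $C(\overline{\Omega})$ the identity $\|S\|_{\cL(C(\overline{\Omega}))} = \|S\one\|_\infty$ holds, and iterating the pointwise bound $S^n \one \leq T^n \one$ (obtained from $0\leq S \leq T$ by induction) gives $\|S^n\| \leq \|T^n\|$; hence $r(S) \leq r(T)$. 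It thus suffices to prove $r(R_1) > 0$ for the purely local Robin operator.

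For this, I would apply $R_1$ to $\one$: the function $u \coloneqq R_1\one$ is the weak solution of $\lambda_0 u + \cA u = \one$ in $\Omega$ with $\partial_\nu^\cA u + \beta u = 0$ on $\partial \Omega$. A strong maximum / Harnack-type principle for weak solutions of elliptic equations with bounded measurable coefficients under Robin boundary conditions (in the De Giorgi--Nash--Moser framework) should give $u \geq c\,\one$ for some constant $c > 0$. Iteration of the positivity of $R_1$ then yields $R_1^n\one \geq c^n \one$, whence $\|R_1^n\| \geq c^n$ and $r(R_1) \geq c > 0$. Applying Krein--Rutman to $R_0$ produces $v \geq 0$, $v \neq 0$, with $R_0 v = r(R_0) v$, so $v \in D(A_{\beta,\mu}^C)$ with $A_{\beta,\mu}^C v = (\lambda_0 - 1/r(R_0))v$; the appendix result identifies this eigenvalue with $s(A_{\beta,\mu}^C)$.

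The main obstacle is precisely the strict positivity estimate $R_1\one \geq c\,\one$. The classical strong maximum principle assumes continuous coefficients, so one has to rely on its more delicate measurable-coefficient, Lipschitz-boundary version, building on the regularity theory already invoked in Section \ref{s.local}. A possible alternative is to pass through the form theory on $L^2(\Omega)$ to extract an eigenvalue of $\cA_\beta^2$ and lift it to $C(\overline{\Omega})$ via the regularity of its eigenfunctions, but a positivity argument of this type appears ultimately unavoidable in either route.
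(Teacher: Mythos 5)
Your overall architecture is sound and its first half coincides with the paper's: reduce to the purely local Robin semigroup via the domination $0\leq T_{\beta,0}(t)\leq T_{\beta,\mu}(t)$ of Proposition \ref{p.monotone}, pass to resolvents, use monotonicity of the spectral radius of positive operators, and then harvest the eigenvalue and positive eigenfunction from compactness of the resolvent (Corollary \ref{c.compact}) and the appendix results. The genuine gap is exactly the step you flag as the ``main obstacle'' and then leave open: the uniform lower bound $R(\lambda_0,A_{\beta,0}^C)\one\geq c\,\one$ with $c>0$. In the generality of this paper --- coefficients that are merely bounded and measurable, a domain that is only Lipschitz, and a Robin boundary condition --- there is no off-the-shelf strong maximum principle or Hopf boundary point lemma that delivers this: Hopf's lemma needs an interior ball condition and regular coefficients, and the De Giorgi--Nash--Moser/Harnack machinery gives strict positivity of $u=R(\lambda_0,A_{\beta,0}^C)\one$ only in the interior of (each component of) $\Omega$, not a bound away from zero up to $\partial\Omega$. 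A bound of the form $u\geq c\,\one$ is essentially equivalent to irreducibility of $T_{\beta,0}^C$ on $C(\overline{\Omega})$, which the authors themselves single out as a hard open point: after Proposition \ref{p.conv} they remark that irreducibility on $C(\overline{\Omega})$ is not easy to prove and cite the forthcoming work \cite{AtEff}, where it is obtained only under additional hypotheses ($\Omega$ connected, $b_j=0$, $a_{ij}=a_{ji}$). So as written, your key step is not justified and cannot simply be quoted from classical elliptic theory.

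Your closing assertion that ``a positivity argument of this type appears ultimately unavoidable in either route'' is also not correct, and this is precisely how the paper closes the gap. One works in $L^2(\Omega)$: by Proposition \ref{p.irreducible} the form criterion makes irreducibility of the semigroup generated by $-\cA_\beta^2$ essentially free, de Pagter's theorem on irreducible compact positive operators \cite{dp86} (see also \cite{schreibmaschine}) then yields $s(-\cA_\beta^2)>-\infty$, and since the resolvents of $-\cA_\beta^2$ and of $A_{\beta,0}^C$ are compact and consistent one has $s(A_{\beta,0}^C)=s(-\cA_\beta^2)$ by \cite[Proposition 2.6]{a94}. No pointwise lower bound at the boundary, no Harnack estimate up to $\partial\Omega$, and no positivity of eigenfunctions is needed for this transfer; the ``alternative'' you mention in passing is in fact the viable route, but it runs through consistency of compact resolvents rather than through any boundary positivity estimate. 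With that replacement your argument closes; the remaining comparison step (domination of resolvents implies $s(A_{\beta,0}^C)\leq s(A_{\beta,\mu}^C)$, which you phrase via spectral radii of the resolvents) is fine and is, if anything, slightly cleaner than the paper's contradiction argument using the blow-up of $\|R(\lambda,A_{\beta,0}^C)\|$ at the spectral bound.
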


\begin{proof}
We first show that $s(A_{\beta, 0}^C) \leq s(A_{\beta, \mu}^C)$.  As a consequence of Proposition \ref{p.monotone}
we have $0\leq T_{\beta,0}^C(t) \leq T_{\beta, \mu}^C(t)$. Taking Laplace transforms, it follows that
$0\leq R(\lambda, A_{\beta,0}^C) \leq R(\lambda, A_{\beta, \mu}^C)$ for all large enough $\lambda$. By \cite[Theorem 5.3.1]{abhn}
for a positive semigroup  the abscissa of the Laplace transform coincides with the spectral bound. Thus, if we assume that
$s(A_{\beta, 0}^C) > s(A_{\beta, \mu}^C)$ we have $0\leq R(\lambda, A_{\beta,0}^C) \leq R(\lambda, A_{\beta, \mu}^C)$
for all $\lambda > s(A_{\beta, 0}^C)$. By \cite[Proposition 3.11.2]{abhn} we have $s(A_{\beta,0}^C) \in \sigma (A_{\beta, 0}^C)$ and hence $\sup_{\lambda >s(A_{\beta, 0}^C)}\|
R(\lambda, A_{\beta,0}^C)\|=\infty$. Consequently, also $\|R(\lambda, A_{\beta,\mu}^C)\|$ is unbounded as $\lambda \downarrow
s(A_{\beta,0}^C)$. It thus follows that $s(A_{\beta, 0}^C) \in \sigma (A_{\beta, \mu}^C)$, a contradiction to our assumption
$s(A_{\beta_0}^C) > s(A_{\beta, \mu}^C)$.

The operator
$A_{\beta, 0}^C$ is the part of $-\cA_{\beta}^2$ in $C(\overline{\Omega})$, defined before Theorem \ref{thm:local-gen-res}. It follows from 
Proposition \ref{p.irreducible} that the semigroup generated by $-\cA_\beta^2$ is irreducible. Since the resolvent of that operator is compact, it follows from de Pagter's Theorem (see \cite[Theorem 3]{dp86} or \cite[C-III Theorem 3.7.(c)]{schreibmaschine}) that $s(-\cA_\beta^2)>-\infty$. But we have
$s(A_{\beta, 0}^C) = s(-\cA_{\beta}^2)$ since the resolvents are compact and consistent, see \cite[Proposition 2.6]{a94}.
\end{proof}

Note that the semigroup $T_{\beta, \mu}^C$ is compact and hence immediately norm continuous whence spectral bound and growth bound coincide.
Thus, if $s(A_{\beta, \mu}) < 0$, then $\|T_{\beta, \mu}^C(t)\|\leq Me^{-\eps t}$ for all $t>0$ and suitable constants $M> 0, \eps>0$, i.e.\ the semigroup is exponentially stable. If, on the other hand, $s(A_{\beta, \mu}^C) >0$ then there exists $\eps >0$ $M>0$ such that $\| T_{\beta, \mu}^C(t)\| \geq Me^{\eps t}$ for all $t>0$. Finally, if $s(A_{\beta, \mu}) = 0$, then the semigroup converges if it is bounded. This is not easy to decide, though. However, we have a precise criterion for the semigroup to be 
sub-Markovian. In that case, we obtain the following result from Theorem \ref{t.asympt}.

\begin{prop}\label{p.conv}
Assume that $\mu(z) \geq 0$ and
\begin{equation}\label{eq.sub1}
\mu (z)(\overline{\Omega}) \leq \beta (z) + \sum_{j=1}^d \tr b_j \nu_j (z)
\end{equation}
for almost every $z\in \partial \Omega$ and
\begin{equation}
\label{eq.sub2}
\sum_{j=1}^d D_j b_j \leq d_0
\end{equation}
almost everywhere. 
Then there exist a positive projection $P\in \mathscr{L}(C(\overline{\Omega}))$ with finite rank and $M> 0$, $\eps >0$ such that 
\[
\|T_{\mu, \beta}^C(t) - P\|_{\mathscr{L}(C(\overline{\Omega}))} \leq Me^{-\eps t}
\]
for all $t>0$.
\end{prop}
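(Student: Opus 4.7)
The strategy is to apply the abstract asymptotic result, Theorem \ref{t.asympt} from the appendix, to the strongly continuous semigroup $T_{\beta, \mu}^C$ on $C(\overline{\Omega})$, and to verify each of its hypotheses in our setting.

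First I assemble the ingredients. Positivity of $T_{\beta, \mu}^C$ follows from Proposition \ref{p.properties}(b) together with the assumption $\mu(z) \geq 0$. Compactness of $T_{\beta, \mu}^C(t)$ for every $t > 0$ comes from Proposition \ref{p.properties}(a): a compact operator on $L^\infty(\Omega)$ which leaves the closed subspace $C(\overline{\Omega})$ invariant restricts to a compact operator on $C(\overline{\Omega})$. Strong continuity and holomorphy of $T_{\beta, \mu}^C$ on $C(\overline{\Omega})$ are part of Theorem \ref{t.main}. Finally, the hypotheses \eqref{eq.sub1}--\eqref{eq.sub2} are exactly the conditions \eqref{eq.mubetasM1}--\eqref{eq.mubetasM2} of Proposition \ref{p.evenmore}, so $T_{\beta, \mu}$ is sub-Markovian, hence $\|T_{\beta, \mu}^C(t)\|_{\mathscr{L}(C(\overline{\Omega}))} \leq 1$ for every $t > 0$.

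With these four properties in hand I invoke Theorem \ref{t.asympt} to obtain a positive projection $P \in \mathscr{L}(C(\overline{\Omega}))$ and constants $M, \epsilon > 0$ such that $\|T_{\beta, \mu}^C(t) - P\| \leq M e^{-\epsilon t}$ for all $t > 0$. The projection $P$ is the spectral projection of $A_{\beta, \mu}^C$ corresponding to the spectral value $0$; since the resolvent of $A_{\beta, \mu}^C$ is compact (by Proposition \ref{p.properties}(a) and the fact that compact semigroups have compact resolvents), the range of $P$ is finite-dimensional, as required. If $0 \notin \sigma(A_{\beta, \mu}^C)$, then $P = 0$ and we recover exponential decay of the whole semigroup.

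The main obstacle, which is absorbed into Theorem \ref{t.asympt} itself, is the spectral gap: one must show that the peripheral spectrum of $A_{\beta, \mu}^C$ is contained in $\{0\}$ and that $0$ is isolated in the spectrum. Since $T_{\beta, \mu}^C$ is bounded and holomorphic, it extends boundedly to some sector $\Sigma_\theta$, which confines $\sigma(A_{\beta, \mu}^C)$ to a sector of the form $-\overline{\Sigma_{\pi/2 - \theta}}$ around the negative real axis; in particular $i\mathbb{R} \cap \sigma(A_{\beta, \mu}^C) \subseteq \{0\}$. Compactness of the resolvent makes $0$ (if it is in the spectrum at all) an isolated eigenvalue of finite algebraic multiplicity, so a positive distance $\epsilon > 0$ separates the rest of the spectrum from the imaginary axis, yielding the stated exponential rate.
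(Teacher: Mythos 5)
Your proof is correct and takes essentially the same route as the paper, which obtains Proposition \ref{p.conv} exactly by combining the sub-Markov property from Proposition \ref{p.evenmore} (so the semigroup is bounded) with positivity, compactness and the abstract Theorem \ref{t.asympt}. The only point worth keeping explicit is the case $s(A_{\beta,\mu}^C)<0$, where Theorem \ref{t.asympt} (which assumes $s=0$) does not apply and one instead takes $P=0$, using that the compact, hence immediately norm continuous, semigroup has growth bound equal to its spectral bound; you do address this at the end, so the argument is complete.
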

In the situation of Proposition \ref{p.conv}, if $s(A_{\beta, \mu}^C)=0$, there exists a function $0<u = Pu$, i.e.\ a positive function in the kernel of $A_{\beta, \mu}^C$. If the semigroup is Markovian, then $\one$ is such a function. It is interesting to know when it is the only one (up to a scalar multiple).
If $T_{\beta, \mu}^C$ is irreducible, then this is the case. Unfortunately, it is not easy to prove irreducibility on $C(\overline{\Omega})$.
However, it follows from the domination property \eqref{eq.monotone}  that $T_{\beta, \mu}^C$ is irreducible whenever $T_{\beta, 0}^C$ is so. As for the latter semigroup,
 a particular case will be settled in Theorem \ref{t.convergence}. 
 We also remark that in a forthcoming paper \cite[Section 6]{AtEff} it will be shown that
 $T_{\beta, 0}^C$ is irreducible whenever $\Omega$ is connected, $b_j = 0$ and $a_{ij}=a_{ji}$ for $i,j=1, \ldots, d$.
 
 \begin{thm}\label{t.conv2}
 Assume that $\mu (z) \geq 0$ and 
 \[
 0\leq \mu (z) (\overline{\Omega}) = \beta (z) + \sum_{j=1}^d \tr b_j \nu_j (z)
 \]
 for almost all $z\in \partial \Omega$ and $\sum_{j=1}^d D_j b_j = d_0$. Assume further that $T_{\beta, 0}^C$ is irreducible. Then there exist a strictly positive measure $\rho$ on $\overline{\Omega}$ and constants $\eps, M >0$ such that
 for $P \in \mathscr{L}(C(\overline{\Omega}))$, given by
 \[
 Pf = \int_{\overline{\Omega}} f\,\dx \rho \cdot \one
 \]
 for all $f\in C(\overline{\Omega})$, we have
 \[
 \|T_{\beta, \mu}^C (t) - P\|_{\cL (C(\overline{\Omega}))} \leq M e^{-\eps t}
 \]
 for all $t>0$.
 \end{thm}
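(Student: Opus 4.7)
The plan is to bootstrap the sub-Markov convergence result, Proposition~\ref{p.conv}, and use irreducibility to pin down the limiting projection as rank one with a strictly positive representing measure.

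First, I would verify that the hypotheses of Proposition~\ref{p.conv} are fulfilled: the equalities $\mu(z)(\overline{\Omega}) = \beta(z) + \sum_{j=1}^d \tr b_j \nu_j(z)$ and $\sum_{j=1}^d D_j b_j = d_0$ in the hypothesis are exactly the Markovian conditions \eqref{eq.mubetaM1}, \eqref{eq.mubetaM2} from Proposition~\ref{p.further}, so $T_{\beta,\mu}^C$ is Markovian; in particular $\one$ is a fixed point, i.e.\ $\one \in \ker A_{\beta,\mu}^C$. The Markovian inequalities trivially imply the sub-Markov inequalities \eqref{eq.sub1}, \eqref{eq.sub2}, so Proposition~\ref{p.conv} gives a positive finite-rank projection $P \in \mathscr{L}(C(\overline{\Omega}))$ and constants $M,\eps>0$ with $\|T_{\beta,\mu}^C(t) - P\| \leq Me^{-\eps t}$ for all $t>0$. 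It remains only to identify $P$ as having the claimed form.

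Next I would transfer irreducibility from $T_{\beta,0}^C$ to $T_{\beta,\mu}^C$. The domination \eqref{eq.monotone} gives $0 \leq T_{\beta,0}^C(t) \leq T_{\beta,\mu}^C(t)$, so by Laplace transform $0 \leq R(\lambda,A_{\beta,0}^C) \leq R(\lambda,A_{\beta,\mu}^C)$ for large $\lambda$. Since $T_{\beta,0}^C$ is irreducible, $R(\lambda,A_{\beta,0}^C)f$ is a quasi-interior point of $C(\overline{\Omega})_+$ for every $0 \leq f \neq 0$, and this property is then inherited by $R(\lambda,A_{\beta,\mu}^C)f$. Hence $T_{\beta,\mu}^C$ is irreducible. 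Combined with compactness (Proposition~\ref{p.properties}(a)) and the fact that $\ker A_{\beta,\mu}^C \neq \{0\}$ (containing $\one$), the classical Perron--Frobenius theory for irreducible positive semigroups with compact resolvent forces $\ker A_{\beta,\mu}^C$ to be one-dimensional, hence $\ker A_{\beta,\mu}^C = \CR \one$. Since $P$ is a projection with range contained in this kernel (by $T_{\beta,\mu}^C(t) P = P$) and with $P\one = \one$ (by Markovianity), $P$ has rank one and $Pf = \phi(f)\one$ for some positive functional $\phi \in C(\overline{\Omega})'$. Riesz representation gives $\phi(f) = \int_{\overline{\Omega}} f \dx \rho$ for a positive Borel measure $\rho$, with $\rho(\overline{\Omega}) = \phi(\one) = 1$.

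Finally I would prove strict positivity of $\rho$ using irreducibility together with the invariance $T_{\beta,\mu}^{C*}(t)\rho = \rho$, which follows from $P\,T_{\beta,\mu}^C(t) = P$ (itself a consequence of $P^2 = P$ and the norm convergence). Suppose for contradiction that $\rho(V) = 0$ for some nonempty open $V \subset \overline{\Omega}$. Choose $0 \leq f \in C(\overline{\Omega})$ with $\supp f \subset V$ and $f \neq 0$; then $\rho(f) = 0$. By irreducibility, there exists $t>0$ with $T_{\beta,\mu}^C(t)f$ strictly positive on $\overline{\Omega}$, hence by continuity bounded below by a positive constant. Then
\[
0 = \rho(f) = \rho(T_{\beta,\mu}^C(t)f) \geq c\,\rho(\overline{\Omega}) = c > 0,
\]
a contradiction. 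Hence $\rho(V)>0$ for every nonempty open $V$, i.e.\ $\rho$ is strictly positive.

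The main obstacle I anticipate is ensuring that the one-dimensionality of $\ker A_{\beta,\mu}^C$ really follows cleanly from irreducibility plus compactness in the present setting; the domination argument makes the reduction of irreducibility to the local-Robin case tidy, but one must quote the appropriate Perron--Frobenius statement (presumably furnished by the Appendix, where analogous tools are developed for Proposition~\ref{p.conv}) rather than re-derive it. Everything else is a rather direct application of the convergence produced by Proposition~\ref{p.conv}.
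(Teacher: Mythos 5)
Your overall route is essentially the paper's, but assembled by hand: the paper's proof is three lines — the hypotheses are exactly the Markovian conditions of Proposition \ref{p.further}, so $\one$ is a fixed vector; the domination \eqref{eq.monotone} transfers irreducibility from $T_{\beta,0}^C$ to $T_{\beta,\mu}^C$; and then Theorem \ref{t.asymp2} of the appendix is invoked, which already delivers the rank-one projection $\varphi\otimes\one$ with $\varphi$ (i.e.\ $\rho$) strictly positive and the exponential estimate. You instead reconstruct the content of Theorem \ref{t.asymp2} from Proposition \ref{p.conv} plus Perron--Frobenius (one-dimensionality of the kernel, $P\one=\one$, Riesz representation). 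That is legitimate; note only that the Perron--Frobenius input needs $s(A_{\beta,\mu}^C)=0$, which you use implicitly — it follows because Markovianity makes the semigroup contractive on $C(\overline{\Omega})$ while $\one\in\ker A_{\beta,\mu}^C$.

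The one step that does not stand as written is the strict positivity of $\rho$: you claim that ``by irreducibility, there exists $t>0$ with $T_{\beta,\mu}^C(t)f$ strictly positive on $\overline{\Omega}$'' for every $0<f$ supported in $V$. This is not a consequence of irreducibility per se (irreducibility is the absence of nontrivial closed invariant ideals, equivalently quasi-interiority of $R(\lambda,A)f$ for $f>0$); pointwise strict positivity of $T(t)f$ for some, let alone all, $t>0$ is a genuinely stronger property, which for holomorphic irreducible semigroups is a nontrivial theorem you would have to quote. Two clean repairs are available from the paper itself. Either run your contradiction through the resolvent: from $PT_{\beta,\mu}^C(t)=P$ you have $\rho(T_{\beta,\mu}^C(t)f)=\rho(f)=0$ for all $t>0$, hence $\rho(R(\lambda,A_{\beta,\mu}^C)f)=\lambda^{-1}\rho(f)=0$ for $\lambda>0$, while irreducibility gives $R(\lambda,A_{\beta,\mu}^C)f\geq\delta\one$ for some $\delta>0$, so $0\geq\delta\,\rho(\overline{\Omega})=\delta$, a contradiction. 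Or simply invoke the fact recorded in Appendix \ref{appendix} (from \cite[C-III Proposition 3.5]{schreibmaschine}) that a positive fixed functional of the adjoint of a positive irreducible semigroup is strictly positive — which is precisely what Theorem \ref{t.asymp2}, the paper's actual citation, packages for you.
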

 
 \begin{proof}
 By Proposition \ref{p.properties} the semigroup $T_{\beta, \mu}^C$ is Markovian and hence $\one$ is a fixed vector of the semigroup.
 As a consequence of \eqref{eq.monotone}, $T_{\beta, \mu}^C$ is irreducible. Now the claim follows from Theorem \ref{t.asymp2}.
 \end{proof}

We next prove exponential stability in the sub-Markovian case.

\begin{thm}\label{t.sub}
Assume that $\mu(z)\geq 0$ for almost all $z\in \partial \Omega$ and that \eqref{eq.sub1} and \eqref{eq.sub2} hold.
Moreover, assume that $T_{\beta, 0}^C$ is irreducible. If in \eqref{eq.sub1} or \eqref{eq.sub2} the 
inequality is strict on some set of positive measure, then there exist $\eps, M >0$ such that
\[
\|T_{\beta, \mu}^C(t)\|_{\mathscr{L}(C(\overline{\Omega}))} \leq Me^{-\eps t}
\]
for all $t>0$.
\end{thm}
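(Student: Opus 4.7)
The strategy is to invoke Proposition \ref{p.conv} and reduce the assertion to showing that the limit projection $P$ vanishes. Proposition \ref{p.conv} already provides an exponential estimate $\|T_{\beta,\mu}^C(t)-P\|\le Me^{-\eps t}$ with a positive finite-rank projection $P$, so it suffices to prove $P=0$; equivalently, since sub-Markovianity forces $s(A_{\beta,\mu}^C)\le 0$, that the fixed space of $T_{\beta,\mu}^C$ in $C(\overline\Omega)$ is $\{0\}$.

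As a preparatory step, I transfer irreducibility from $T_{\beta,0}^C$ to $T_{\beta,\mu}^C$ using the domination \eqref{eq.monotone}. The closed ideals of $C(\overline\Omega)$ are exactly the sets $J_F:=\{f\in C(\overline\Omega):f|_F=0\}$ for closed $F\subset\overline\Omega$. If $J_F$ is $T_{\beta,\mu}^C$-invariant, then for $0\le f\in J_F$ the sandwich $0\le T_{\beta,0}^C(t)f\le T_{\beta,\mu}^C(t)f$, together with $T_{\beta,\mu}^C(t)f|_F=0$, yields $T_{\beta,0}^C(t)f|_F=0$; since $f_\pm\in J_F$ whenever $f\in J_F$, linearity gives $T_{\beta,0}^C(t)J_F\subset J_F$, and the hypothesised irreducibility of $T_{\beta,0}^C$ forces $F=\emptyset$ or $F=\overline\Omega$. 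Hence $T_{\beta,\mu}^C$ is irreducible.

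Assume now for contradiction that the fixed space of $T_{\beta,\mu}^C$ is nontrivial. Since $T_{\beta,\mu}^C(t)$ is compact (Proposition \ref{p.properties}(a)) and irreducible, classical Perron--Frobenius theory (cf.\ \cite{schreibmaschine} and Appendix \ref{appendix}) supplies both a strictly positive fixed function $u\in C(\overline\Omega)$ and a strictly positive invariant Radon measure $\rho$ on $\overline\Omega$ for the adjoint semigroup, where \emph{strictly positive} for $\rho$ means $\rho(U)>0$ for every nonempty open $U\subset\overline\Omega$. I claim that the strictness hypothesis forces $T_{\beta,\mu}^C(t_0)\one\neq\one$ for some $t_0>0$. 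Indeed, if $T_{\beta,\mu}^C(t)\one=\one$ for all $t>0$, the differential characterisation of the generator gives $\one\in D(A_{\beta,\mu}^C)$ and $A_{\beta,\mu}^C\one=0$; by definition of the domain, this in turn forces $\cA\one=d_0-\sum_{j=1}^d D_j b_j=0$ in $C(\overline\Omega)$ and $\partial_\nu^\cA\one+\beta=\sum_{j=1}^d\tr(b_j)\nu_j+\beta=\mu(\cdot)(\overline\Omega)$ $\sigma$-a.e.\ on $\partial\Omega$, contradicting the assumption that one of \eqref{eq.sub1} or \eqref{eq.sub2} is strict on a set of positive measure.

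Since $T_{\beta,\mu}^C(t_0)\one\le\one$ pointwise by sub-Markovianity and $T_{\beta,\mu}^C(t_0)\one\in C(\overline\Omega)$, the strict inequality $T_{\beta,\mu}^C(t_0)\one<\one$ must hold on a nonempty open set $U\subset\overline\Omega$. Strict positivity of $\rho$ combined with invariance then gives
\[
\int_{\overline\Omega}\one\,\dx\rho=\int_{\overline\Omega}T_{\beta,\mu}^C(t_0)\one\,\dx\rho<\int_{\overline\Omega}\one\,\dx\rho,
\]
which is absurd. Hence $P=0$, and Proposition \ref{p.conv} yields the claimed exponential decay. The main obstacle in this plan is the Perron--Frobenius step producing the strictly positive adjoint-invariant measure $\rho$; this is classical for irreducible positive compact semigroups on $C(\overline\Omega)$, but must be extracted carefully from the dual action on $\mathscr{M}(\overline\Omega)$ rather than from a symmetric $L^2$-framework.
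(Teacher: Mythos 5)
Your proof is correct, but it takes a genuinely different route from the paper's. The paper replaces $(d_0,\beta)$ by $\tilde d_0=\sum_j D_jb_j$ and $\tilde\beta(z)=\mu(z)(\overline\Omega)-\sum_j\tr b_j(z)\nu_j(z)$, obtains a dominating \emph{Markovian} semigroup $\tilde T^C_{\tilde\beta,\mu}$ via Propositions \ref{p.monotone} and \ref{p.further}, checks by a short case distinction (whether $\beta=\tilde\beta$ resp.\ $d_0=\tilde d_0$) that the two generators are different, and then invokes the strict monotonicity of the spectral bound for dominated irreducible semigroups with compact resolvent (Theorem \ref{t.monotone}) to conclude $s(A^C_{\beta,\mu})<s(\tilde A^C_{\tilde\beta,\mu})=0$; exponential decay then follows since the compact semigroup is immediately norm continuous. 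You instead work with $T^C_{\beta,\mu}$ itself: after transferring irreducibility through \eqref{eq.monotone} (exactly as the paper also remarks after Proposition \ref{p.conv}), you exclude a nontrivial fixed space by combining sub-Markovianity (Proposition \ref{p.evenmore}), the strictly positive adjoint-invariant functional, and the observation that $T^C_{\beta,\mu}(t)\one=\one$ for all $t$ would force equality a.e.\ in both \eqref{eq.sub1} and \eqref{eq.sub2}; then $P=0$ in Proposition \ref{p.conv} gives the decay. This buys you independence from Theorem \ref{t.monotone} (the Arendt--Batty strict monotonicity result), at the price of being tailored to the sub-Markovian situation, whereas the paper's comparison argument is the same tool it reuses for the blow-up result; note also that your ``$\one$ is not fixed'' computation is essentially the same conormal-derivative calculation the paper performs to verify $A^C_{\beta,\mu}\neq\tilde A^C_{\tilde\beta,\mu}$. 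Two details you should make explicit: (i) a nontrivial fixed space together with the sup-norm contractivity yields $s(A^C_{\beta,\mu})=0$, which is what licenses the application of Theorem \ref{t.asymp2}; that theorem (with compactness from Proposition \ref{p.properties}) already delivers the strictly positive $\varphi$ with $T^C_{\beta,\mu}(t)'\varphi=\varphi$, so the ``main obstacle'' you flag is in fact settled by the appendix; (ii) in the final integration, pass to a level set $\{\one-T^C_{\beta,\mu}(t_0)\one>\delta\}$, which is open and nonempty for small $\delta>0$, to conclude $\int_{\overline\Omega}\bigl(\one-T^C_{\beta,\mu}(t_0)\one\bigr)\diff{}{\rho}>0$ from strict positivity of $\rho$.
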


\begin{proof}
Let us put 
\[
\tilde \beta (z) \coloneqq \mu (z)(\overline{\Omega}) - \sum_{j=1}^d \tr b_j (z) \nu_j (z)
\]
and $\tilde d_0(x) = \sum_{j=1}^d (D_j b_j)(x)$. Replace $d_0$ with $\tilde d_0$ and $\beta$ with $\tilde \beta$ and denote by $\tilde T_{\beta, \mu}$ the corresponding semigroup on $C(\overline{\Omega})$. We denote the generator of $\tilde{T}^C_{\tilde \beta, \mu}$
by $\tilde A_{\tilde \beta, \mu}^C$. Then $0 \leq T_{\beta, \mu}^C(t) \leq \tilde T^C_{\tilde\beta, \mu}(t)$
for all $t>0$ by Proposition \ref{p.monotone} and a perturbation argument, cf.\ the proof of Proposition \ref{p.evenmore}. By Proposition \ref{p.further} the semigroup $\tilde T$ is Markovian
so that its generator has spectral bound $0$. However, the generators of these two semigroups are different.
To see this, let us first assume that $\beta\neq \tilde \beta$ in $L^\infty(\partial\Omega)$. Note that the conormal derivative
$\partial_\nu^\cA =\partial_\mu^{\tilde{\cA}}$ does not depend on the zero order term $d_0$ resp.\ $\tilde d_0$. We find 
\[
\langle \one, \mu (z)\rangle = \partial_\nu^{\tilde \cA} \one + \tilde \beta \one \neq \partial_\nu^\cA \one + \beta \one.
\]
Thus $\one \not\in D(A_{\beta, \mu}^C)$ but $\one \in D(\tilde A_{\beta, \mu}^C)$. If, on the other hand, $\beta = \tilde \beta$
in $L^\infty (\partial \Omega)$, then we have $d_0\neq \tilde d_0$ in $L^\infty (\Omega)$. Note that $A_{\beta, \mu}\one = \tilde d_0 - d_0$. If $\tilde d_0 - d_0 \in C(\overline{\Omega})$, it follows that $\one \in D(A_{\beta, \mu}^C)$ but $A_{\beta, \mu}^C\one
\neq \tilde A_{\tilde \beta, \mu}^C\one$. If $\tilde d_0 - d_0 \not\in C(\overline{\Omega})$, then $\one \not\in D(A_{\beta, \mu}^C)$.
In any case we have $\tilde A^C_{\tilde\beta, \mu} \neq A^C_{\beta, \mu}$. Thus the claim follows from Theorem \ref{t.monotone}.
\end{proof}

Next we show a blow-up result in the case where we perturb a Markovian semigroup $T_{\beta, 0}$ by a positive $\mu$. Recall 
from Proposition \ref{p.submarkov} that $T_{\beta, 0}$ is Markovian if and only if the identities \eqref{eq.M1} and \eqref{eq.M2} hold.

\begin{thm}\label{t.explode}
Assume the identities \eqref{eq.M1} and \eqref{eq.M2} and that $\Omega$ is connected. 
If $\mu (z) \geq 0$ for almost all $z\in \partial \Omega$ but not identically $0$ almost everywhere, then there exist $\omega, M >0$ such that
\[
\| T_{\beta, \mu}^C(t)\|_{\mathscr{L}(C(\overline{\Omega}))} \geq M e^{\omega t}
\]
for all $t>0$.
\end{thm}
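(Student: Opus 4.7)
The plan is to show that $s := s(A_{\beta,\mu}^C) > 0$. Once established, since $T_{\beta,\mu}^C$ is holomorphic (Theorem~\ref{t.main}) and compact (Proposition~\ref{p.properties}), the growth bound equals $s$, and the positive eigenfunction $u$ at $s$ supplied by Proposition~\ref{p.nonempty} satisfies $T_{\beta,\mu}^C(t)u = e^{st}u$, so $\|T_{\beta,\mu}^C(t)\| \geq \|T_{\beta,\mu}^C(t)u\|_\infty/\|u\|_\infty = e^{st}$, which is the claim with $\omega = s$ and $M = 1$.

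By Proposition~\ref{p.submarkov}(b), the identities \eqref{eq.M1}--\eqref{eq.M2} force $T_{\beta,0}^C$ to be Markovian, so $s(A_{\beta,0}^C) = 0$; combined with $T_{\beta,\mu}^C \geq T_{\beta,0}^C$ from Proposition~\ref{p.monotone} and the Laplace-transform comparison in the proof of Proposition~\ref{p.nonempty}, this yields $s \geq 0$. Assume for contradiction $s = 0$, and let $u \geq 0$ be a positive eigenfunction from Proposition~\ref{p.nonempty}, so that $\cA u = 0$ in $\Omega$ and $\partial_\nu^\cA u + \beta u = \langle u, \mu(\cdot)\rangle$ on $\partial\Omega$. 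The key step is to test this equation against a strictly positive density $\phi_0$ of the invariant measure of the dual semigroup $(T_{\beta,0}^C)^*$. Using the adjoint Green formula together with the adjoint Markovianity identities $\cA^*\phi_0 = 0$ in $\Omega$ and $\partial_\nu^{\cA^*}\phi_0 + \beta\phi_0 = 0$ on $\partial\Omega$, the boundary and interior contributions cancel except for
\[
\int_{\partial\Omega} \phi_0(z)\,\langle u, \mu(z)\rangle\,\dx\sigma(z) = 0.
\]
Since $\phi_0 > 0$ and $u, \mu(z) \geq 0$, this forces $\langle u, \mu(z)\rangle = 0$ for $\sigma$-a.e.\ $z$, so the non-local boundary condition of $u$ collapses to the homogeneous local Robin condition and $u \in \ker A_{\beta,0}^C$. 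On the connected open set $\Omega$, the strong maximum principle for the strictly elliptic equation $\cA u = 0$ gives $u > 0$ in $\Omega$, and a Hopf-type argument at regular boundary points, applied to the now-homogeneous Robin condition, extends this to $u > 0$ for $\sigma$-a.e.\ $z \in \partial\Omega$. Hence $\langle u, \mu(z)\rangle > 0$ whenever $\mu(z) \neq 0$, which together with $\mu \not\equiv 0$ contradicts the displayed identity.

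The \textbf{main obstacle} is the existence of a strictly positive invariant density $\phi_0$ for $(T_{\beta,0}^C)^*$ with enough regularity to justify the adjoint Green formula, and the use of Hopf's lemma on the Lipschitz boundary; both ultimately reduce to an irreducibility statement for $T_{\beta,0}^C$ on a connected Lipschitz domain under Markovianity, which is subtle in the generality of merely measurable coefficients. A cleaner alternative, avoiding $\phi_0$ and Hopf altogether, would be to invoke an abstract monotonicity result for positive semigroups in the spirit of Theorem~\ref{t.monotone} used in the proof of Theorem~\ref{t.sub}, but applied in the dominating direction: a compact positive semigroup strictly dominating an irreducible Markovian one must have strictly positive spectral bound. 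Verifying irreducibility of $T_{\beta,0}^C$ in our setting, by an invariant-ideal plus unique-continuation argument, is the key technical point in either route.
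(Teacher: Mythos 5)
Your primary route (assume $s(A_{\beta,\mu}^C)=0$, take the positive eigenfunction $u$ from Proposition \ref{p.nonempty}, and test it against a strictly positive invariant density $\phi_0$ of the adjoint of $T_{\beta,0}^C$, then conclude via the strong maximum principle and a Hopf-type lemma) cannot be completed with the tools available here, as you partly acknowledge: with merely measurable coefficients on a Lipschitz domain there is no pointwise adjoint equation $\cA^*\phi_0=0$ with an adjoint Robin condition, no regularity of $\phi_0$ that would justify the Green formula you invoke, and no Hopf lemma at the boundary. There is also a further gap at the end of that route: even granting $u>0$ in $\Omega$ and $u>0$ for $\sigma$-a.e.\ $z\in\partial\Omega$, the measure $\mu(z)$ lives on $\overline{\Omega}$ and may charge a $\sigma$-null subset of $\partial\Omega$, so ``$\langle u,\mu(z)\rangle>0$ whenever $\mu(z)\neq 0$'' does not follow as stated.

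Your fallback route is essentially the paper's proof, but the ``key technical point'' you leave open --- irreducibility of $T_{\beta,0}^C$ --- is already settled by results in the paper, with no unique-continuation argument needed: by Proposition \ref{p.submarkov}, \eqref{eq.M1}--\eqref{eq.M2} make $T_{\beta,0}$ Markovian; its $L^2$-realization is irreducible by Proposition \ref{p.irreducible} (this is where connectedness of $\Omega$ enters), and Proposition \ref{p.zug} lifts irreducibility to $C(\overline{\Omega})$ because $\one\in\ker A_{\beta,0}\cap C(\overline{\Omega})$ is bounded below by a positive constant. Since $0\leq T_{\beta,0}^C(t)\leq T_{\beta,\mu}^C(t)$ by Proposition \ref{p.monotone}, the dominating semigroup $T_{\beta,\mu}^C$ inherits irreducibility, and its generator has compact resolvent by Proposition \ref{p.properties}. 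One more point you should make explicit before invoking Theorem \ref{t.monotone}: the generators must actually differ; this is checked by observing that $\partial_\nu^{\cA}\one+\beta\one=0<\mu(z)(\overline{\Omega})$ on a set of positive measure, so $\one\in D(A_{\beta,0}^C)$ but $\one\notin D(A_{\beta,\mu}^C)$. Theorem \ref{t.monotone} then yields $0=s(A_{\beta,0}^C)<s(A_{\beta,\mu}^C)=:\omega$, and the positive eigenfunction at $\omega$ gives $\|T_{\beta,\mu}^C(t)\|\geq e^{\omega t}$ exactly as in your opening paragraph. So the skeleton of your alternative is correct; the gap is that you treat as an open subtlety precisely what Propositions \ref{p.irreducible} and \ref{p.zug} were designed to provide, and you omit the verification that the two generators are distinct.
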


\begin{proof}
The semigroup $T_{\beta,0}^C$  is Markovian (by Proposition \ref{p.submarkov}) and has an extension
to $L^2(\Omega)$ which is irreducible (as a consequence of Proposition \ref{p.irreducible}). From Proposition \ref{p.zug}, 
it follows that $T_{\beta,0}^C$ is irreducible. By Proposition \ref{p.monotone} we have $T_{\beta,0}^C(t) \leq T_{\beta,\mu}^C(t)$ for all $t>0$.
Since $\partial_\nu\one + \beta\one =0 < \mu (z)(\Omega)$ for $z$ in a set of positive measure, one has
$\one\not\in D(A^C_{\beta, \mu})$. Thus the two semigroups are different and it follows from Theorem \ref{t.monotone} that $0=s(A_{\beta,0}^C) < s(A_{\beta,\mu}^C) =\colon \omega$.
Thus there exists $u \in C(\overline{\Omega})$ such that $u\geq \one$ with $A_{\beta,\mu}^Cu = \omega u$. But this implies
$T_{\beta,\mu}^C(t) u = e^{\omega t} u$ which, in turn, yields the claim.
\end{proof}

\begin{rem}
In particular, it follows from Theorem \ref{t.explode} that the only realization of our operator with non-local \emph{Neumann} boundary 
conditions (i.e.\ where $\beta =0$) which generates a sub-Markovian semigroup is that with classical (local) Neumann boundary conditions (i.e.\ $\beta =0$ and $\mu =0$).
\end{rem}

\section{Absolutely continuous measures \texorpdfstring{$\mu (z)$}{}}

In this section we consider the case where the measures $\mu(z)$ are absolutely continuous with respect to the Lebesgue measure on $\Omega$. More precisely, we assume that we are given a function $h\in L^2(\partial\Omega\times \Omega)$ such that
\[
\mu (z) (A) = \int_{A} h(z, x)\,\dx x.
\] 
In this situation we can use form methods to show that the semigroup $T_{\beta, \mu}$, defined on $L^\infty(\Omega)$,
has an extension to $L^2(\Omega)$. This allows us to establish irreducibility of $T_{\beta, \mu}^C$ via Propositions \ref{p.irreducible}
and \ref{p.zug} in the Markovian case, provided $\Omega$ is connected. On the other hand, we can use form methods to show that our assumptions to infer positivity resp.\ sub-Markovianity are close to optimal.

We consider the form $\mathfrak{a}_{\beta, h} : H^1(\Omega)\times H^1(\Omega) \to \CR$, given by
\[
\mathfrak{a}_{\beta, h}[u,v] \coloneqq \mathfrak{a}_\beta[u,v] - \int_{\partial\Omega} \int_{\Omega} h(z,x) u(x)\,\dx x \, v(z)\,\dx \sigma (z).
\]
Then the form $\mathfrak{a}_{\beta, h}$ is elliptic and continuous. Denote by $\cA_{\beta, h}^2$ the associated operator on $L^2(\Omega)$. Then $-\cA_{\beta, h}^2$ generates a holomorphic, strongly continuous semigroup $T_{\beta, h}^2$ on 
$L^2(\Omega)$. It is easy to see that if in addition
\begin{equation}\label{eq.additional}
\int_{\partial\Omega}\Big(\int_{\Omega} |h(z,x)|\dx x\Big)^p\,\dx \sigma,
\end{equation}
for some $p>d-1$ with $p\geq 2$, then the measures $\mu (z) = h(z, x)\dx x$ satisfy Hypothesis \ref{h.mu} whence we obtain a semigroup
$T_{\beta, \mu}$ on $L^\infty(\Omega)$ with generator $A_{\beta, \mu}$. Using the definition of the co-normal derivative one sees that 
the part of $-\cA_{\beta, h}^2$ in $L^\infty (\Omega)$ is precisely the operator $A_{\beta, \mu}$. It follows that
$T_{\beta, h}^2$ leaves the space $L^\infty(\Omega)$ invariant and the restriction of that semigroup to $L^\infty(\Omega)$ is 
$T_{\beta, \mu}$.

\begin{prop}\label{p.optimal}
With the notation above, we have:
\begin{enumerate}
[(a)]
\item The semigroup $T^2_{\beta, h}$ is positive if and only if $h \geq 0$ almost everywhere.
\item Assume that $b_j \in W^{1,\infty}(\Omega)$ for $j=1, \ldots, d$. Then  $T^2_{\beta,h}$ is sub-Markovian if and only if 
\eqref{eq.mubetasM1} holds, $h\geq 0$ almost everywhere and $0\leq \int_{\Omega}h(z,x)\dx x \leq \beta (z) + \sum_{j=1}^d \tr b_j(z) \nu_j (z)$ for almost every  $z\in \partial\Omega$.
\end{enumerate}
\end{prop}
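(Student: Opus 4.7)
The plan is to derive both statements from the Beurling--Deny--Ouhabaz criteria for positivity and sub-Markovianity applied to the form $\mathfrak{a}_{\beta,h}$ on $V=H^1(\Omega)$. For part (a), sufficiency already follows from Proposition \ref{p.properties}(b) once one observes that the restriction of $T_{\beta,h}^2$ to $L^\infty(\Omega)\cap L^2(\Omega)$ agrees with $T_{\beta,\mu}$, and that $L^\infty\cap L^2$ is dense in $L^2(\Omega)$. For necessity, Ouhabaz's criterion yields $\mathfrak{a}_{\beta,h}[u^+,u^-]\leq 0$ for every $u\in H^1(\Omega)$. I would test this on $u=v-w$ where $0\le v\in C_c^\infty(\Omega)$ and $0\le w\in H^1(\Omega)$ has support in a collar of $\partial\Omega$ disjoint from $\operatorname{supp} v$. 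Every term in $\mathfrak{a}_\beta[v,w]$ then vanishes pointwise (each is a product of a factor supported in $\operatorname{supp} v$ with one supported in $\operatorname{supp} w$), and the criterion reduces to
\[
\int_{\partial\Omega} w(z)\int_\Omega h(z,x) v(x)\,\dx x\,\dx\sigma(z)\ge 0.
\]
Since the trace map $H^1(\Omega)\to L^2(\partial\Omega)$ is surjective, one may let the trace of $w$ range over a dense set of nonnegative functions on $\partial\Omega$, obtaining $\int_\Omega h(z,\cdot) v\,\dx x\ge 0$ for $\sigma$-a.e.\ $z$; then varying $v$ over a countable family of positive bumps shrinking around each interior point forces $h\ge 0$ almost everywhere on $\partial\Omega\times\Omega$.

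For part (b) the BDO criterion for sub-Markovianity states $\mathfrak{a}_{\beta,h}[u\wedge 1,(u-1)^+]\ge 0$ for all $u\in H^1(\Omega)$. Imitating the computation in the proof of Proposition \ref{p.submarkov}(a), and using $b_j\in W^{1,\infty}(\Omega)$ to integrate by parts, the local part of the form evaluates to $\int_\Omega(d_0-\sum_j D_j b_j)(u-1)^+\,\dx x+\int_{\partial\Omega}(\sum_j b_j\nu_j+\beta)(u-1)^+\,\dx\sigma$. Adding the nonlocal contribution gives
\begin{align*}
\mathfrak{a}_{\beta,h}[u\wedge 1,(u-1)^+]
&= \int_\Omega\Big(d_0-\sum_j D_j b_j\Big)(u-1)^+\,\dx x\\
&\quad + \int_{\partial\Omega}\Big(\sum_j b_j\nu_j+\beta-\int_\Omega h(z,x)(u\wedge 1)(x)\,\dx x\Big)(u-1)^+\,\dx\sigma.
\end{align*}
For sufficiency, $u\wedge 1\le 1$ together with $h\ge 0$ yields $\int_\Omega h(z,x)(u\wedge 1)(x)\,\dx x\le\int_\Omega h(z,x)\,\dx x\le\beta(z)+\sum_j\operatorname{tr}b_j(z)\nu_j(z)$, so the integrand on each line is pointwise nonnegative by the two remaining hypotheses.

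For necessity in (b), part (a) already gives $h\ge 0$ a.e. Plugging $u=\one+v$ with $0\le v\in H^1(\Omega)$ into the criterion (so that $u\wedge 1=\one$ and $(u-1)^+=v$) collapses it to
\[
\int_\Omega\Big(d_0-\sum_j D_j b_j\Big)v\,\dx x+\int_{\partial\Omega}\Big(\sum_j b_j\nu_j+\beta-\int_\Omega h(z,x)\,\dx x\Big)v\,\dx\sigma\ge 0
\]
for every $0\le v\in H^1(\Omega)$. Lemma \ref{l.positive} then delivers the pointwise inequality \eqref{eq.mubetasM1} and the upper bound $\int_\Omega h(z,x)\,\dx x\le\beta(z)+\sum_j\operatorname{tr}b_j(z)\nu_j(z)$; the lower bound $0\le\int_\Omega h(z,x)\,\dx x$ is automatic from $h\ge 0$.

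The main obstacle I expect is the approximation step in (a): one must justify rigorously that testing against separable tensors $v(x) w(z)$ with well-separated supports is enough to conclude pointwise positivity of $h$ on the product space. This combines surjectivity of the $H^1$-trace onto a dense subspace of $L^2(\partial\Omega)$ with Fubini and a countable density argument.
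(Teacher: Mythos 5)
Your proposal is correct, and its skeleton coincides with the paper's: both necessity arguments are exactly the paper's (for (a), testing $\mathfrak{a}_{\beta,h}[u^+,u^-]\leq 0$ on $u=v-w$ with $0\le v\in\mathscr{D}(\Omega)$ and nonnegative $w$ supported near $\partial\Omega$ with $\supp w\cap\supp v=\emptyset$, then a density/Fubini argument; for (b), the same integration by parts as in Proposition \ref{p.submarkov}, the choice $u=\one+v$, and Lemma \ref{l.positive}). The differences are in the two sufficiency directions. In (a) you reroute through Proposition \ref{p.properties}(b) and the consistency of $T^2_{\beta,h}$ with $T_{\beta,\mu}$; this works, but it silently uses the extra integrability condition \eqref{eq.additional} (needed for $T_{\beta,\mu}$ on $L^\infty(\Omega)$ to exist at all), whereas the paper simply observes that $\mathfrak{a}_{\beta,h}[u^+,u^-]=-\int_{\partial\Omega}\int_\Omega h\,u^+u^-\le 0$ when $h\ge 0$, a one-line check of the first Beurling--Deny criterion valid for every $h\in L^2(\partial\Omega\times\Omega)$. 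In (b) you verify sufficiency directly from the Beurling--Deny--Ouhabaz criterion (using $u\wedge 1\le\one$ and $h\ge0$ to bound the nonlocal term), which is a clean, self-contained alternative; the paper instead quotes Proposition \ref{p.evenmore}, whose proof goes through the domination machinery of Section \ref{s.2}. Finally, one small imprecision: the trace map $H^1(\Omega)\to L^2(\partial\Omega)$ is not surjective (its range is $H^{1/2}(\partial\Omega)$); you only need, and later correctly invoke, that its range contains a set of nonnegative functions dense in the positive cone of $L^2(\partial\Omega)$ (e.g.\ restrictions of smooth functions supported in a shrinking collar, as in the paper's construction of the $w_n$).
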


\begin{proof}
(a)  By the first Beurling--Deny criterion \cite[Corollary 2.6]{mvv05} $T^2_{\beta,\mu}$ is positive if and only if $\mathfrak{a}_{\beta, \mu}[u^+, u^-]\leq 0$ for all $u\in H^1(\Omega)$. If $h\geq 0$ almost everywhere this is clearly fulfilled.
	
	Conversely assume that $T_{\beta,\mu}^2(t) \geq 0$ for all $t>0$. Then
\[
\int_{\partial\Omega}\int_{\Omega} h(z,x) u^+(x)\,\dx x \, u^- (z)\,\dx \sigma (z) = - \mathfrak{a}_{\beta, h}[u^+, u^-]\geq 0
\]
for all $u\in H^1(\Omega)$. Now let functions $0\leq v \in \mathscr{D}(\Omega)$ and $0\leq \varphi \in C(\partial\Omega)$ be given.
We find a sequence $w_n \in \mathscr{D}(\CR^d)$ with $0\leq w_n \leq \|\varphi\|_\infty$ such that $\supp w_n \cap \supp v = \emptyset$ and $w_n (z) \to \varphi (z)$ for all $z \in \partial\Omega$. Inserting $u= v- w_n$ in the above inequality and using dominated convergence, we obtain that
\[
\int_{\partial\Omega}\int_{\Omega} h(z,x) v(x)\,\dx x \, \varphi (z)\,\dx\sigma (z) \geq 0
\]
As $0\leq \varphi \in C(\partial\Omega)$ was arbitrary, we conclude that
\[
\int_{\Omega} h(z,x) v(x)\dx x \geq 0
\]
for almost all $z \in \partial\Omega$. As $0\leq v\in \mathscr{D}(\Omega)$ was arbitrary, it follows that for almost all $z\in \partial\Omega$ we have $h(z,x) =0$ for almost all $x \in \Omega$. Now Fubini's theorem implies that $h\geq 0$ with respect to the product measure, proving the necessity of the condition.\medskip

(b) The sufficiency of the inequality above was already established in Proposition \ref{p.evenmore}, so we only need to prove its necessity. If the semigroup is sub-Markovian, it is positive and thus $h\geq 0$ almost everywhere by (a).
	
By the  Beurling--Deny--Ouhabaz criterion \cite[Corollary 2.8]{mvv05}, for $u \in H^1(\Omega)$ we have
	\begin{align*}
		0& \leq \mathfrak{a}_{\beta, h}[u\wedge 1,(u-1)^+]\\
		& =-\sum_j\int_{\Omega} (D_jb_j)(u-1)^+\dx x+\int_{\Omega}d_0(u-1)^+\dx x\\
		&\quad +\int_{\partial\Omega}\Big(\sum_j b_j\nu_j(u-1)^++\beta(z)-\int_{\Omega}(u\wedge 1)(x)h(z,x)\dx x\Big) (u-1)^+(z)\,\dx \sigma(z).
	\end{align*}
Now let $v \in H^1(\Omega)$ such that $v\geq 0$. Inserting $u= v+\one$ in the above inequality, the desired inequalities follow from Lemma \ref{l.positive}.
\end{proof}

\begin{rem}
We have already noted after Proposition \ref{p.evenmore} that Condition \eqref{eq.mubetasM1} is necessary for $T_{\beta, \mu}$ to be sub-Markovian.
\end{rem}

We now consider the case where the semigroup is Markovian. Then we can prove irreducibility via Proposition \ref{p.irreducible}
and deduce convergence of the semigroup to an equilibrium.

\begin{thm}\label{t.convergence}
Assume that $\Omega$ is connected, and that $h\geq 0$ almost everywhere satisfies Equation \eqref{eq.additional}.
Moreover, assume that $\sum_{j=1}^d D_jb_j = d_0$ almost everywhere on $\Omega$ and
\[
\sum_{j=1}^d b_j(z)\nu_j(z) + \beta (z) = \int_{\Omega} h(z,x)\, \dx x
\] 
almost everywhere on $\partial \Omega$. Then the semigroup $T_{\beta, \mu}^C$ on $C(\overline{\Omega})$ is irreducible and Markovian. Consequently, there exist $0\ll \varphi \in L^2(\Omega)$ such that $\int_{\Omega} \varphi (x) \dx x = 1$
and constants $\eps, M >0$ such that
\[
\|T_{\beta, \mu}^C(t) - \varphi\otimes \one\|_{\mathscr{L}(C(\overline{\Omega}))} \leq Me^{-\eps t}
\]
for all $t>0$.
\end{thm}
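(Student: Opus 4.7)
The plan is to first establish Markovianity and irreducibility of $T_{\beta,\mu}^C$, then apply Theorem \ref{t.conv2} to get the convergence to a rank one projection $P f = (\int f\,\dx\rho)\one$, and finally identify the measure $\rho$ with an absolutely continuous measure $\varphi\,\dx x$ by working with the $L^2$-extension $T_{\beta, h}^2$ of the semigroup and its adjoint.

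\textbf{Step 1 (Markovianity).} Since $\mu(z)$ is absolutely continuous with density $h(z,\cdot)$, we have $\mu(z)(\overline{\Omega}) = \int_\Omega h(z,x)\,\dx x$, so the two hypotheses of the theorem are exactly the equalities \eqref{eq.mubetaM1} and \eqref{eq.mubetaM2}. Proposition \ref{p.further} then yields that $T_{\beta,\mu}^C$ is Markovian.

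\textbf{Step 2 (Irreducibility).} The $L^2$-semigroup $T_\beta^2$ associated with the local form $\mathfrak{a}_\beta$ is irreducible on $L^2(\Omega)$ by Proposition \ref{p.irreducible}, and by Proposition \ref{p.zug} its restriction $T_{\beta,0}^C$ to $C(\overline{\Omega})$ is irreducible. By Proposition \ref{p.monotone} (applied with $\mu_1=0\le\mu_2=\mu$) we have the domination $0\le T_{\beta,0}^C(t)\le T_{\beta,\mu}^C(t)$. If $0\le f\ne 0$ in $C(\overline{\Omega})$ and $x\in\overline{\Omega}$, irreducibility of $T_{\beta,0}^C$ gives some $t>0$ with $(T_{\beta,0}^C(t)f)(x)>0$, and the domination immediately transfers this positivity to $T_{\beta,\mu}^C$. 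Hence $T_{\beta,\mu}^C$ is irreducible.

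\textbf{Step 3 (Exponential convergence to a rank one projection).} Steps 1 and 2 put us in the situation of Theorem \ref{t.conv2} (whose assumption was precisely irreducibility of $T_{\beta,0}^C$ together with the Markovian identities). That theorem yields a strictly positive Borel measure $\rho$ on $\overline{\Omega}$ and constants $\eps,M>0$ such that
\[
\bigl\|T_{\beta,\mu}^C(t) - P\bigr\|_{\mathscr{L}(C(\overline{\Omega}))} \le M e^{-\eps t}, \qquad Pf=\Bigl(\int_{\overline{\Omega}} f\,\dx\rho\Bigr)\one.
\]

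\textbf{Step 4 (The invariant measure has an $L^2$ density).} This is the main new content, and the step I expect to be the most delicate. Consider the $L^2$-extension $T_{\beta,h}^2$ introduced before Proposition \ref{p.optimal}. It is holomorphic and strongly continuous on $L^2(\Omega)$ with compact resolvent (since $H^1(\Omega)\hookrightarrow L^2(\Omega)$ is compact), it is positive by Proposition \ref{p.optimal}(a) (as $h\ge 0$), and it is consistent with $T_{\beta,\mu}$ on $L^\infty(\Omega)$, so Markovianity of $T_{\beta,\mu}^C$ passes to $T_{\beta,h}^2$. The same domination argument as in Step 2, using the local extension $T_\beta^2$ and Proposition \ref{p.irreducible}, shows that $T_{\beta,h}^2$ is irreducible on $L^2(\Omega)$. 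The adjoint semigroup $(T_{\beta,h}^2)^*$ is then positive, compact and irreducible, with spectral bound $0$; by de Pagter's theorem (applied to the adjoint, as in the proof of Proposition \ref{p.nonempty}) there exists a unique (up to scaling) $\varphi\in L^2(\Omega)$ with $\varphi\gg 0$ and $(T_{\beta,h}^2)^*(t)\varphi=\varphi$ for all $t>0$, and we normalize by $\int_\Omega \varphi\,\dx x=1$. For every $f\in C(\overline{\Omega})$ and every $t>0$,
\[
\int_\Omega \bigl(T_{\beta,\mu}^C(t)f\bigr)\varphi\,\dx x
= \int_\Omega f\cdot (T_{\beta,h}^2)^*(t)\varphi\,\dx x
= \int_\Omega f\varphi\,\dx x.
\]
Letting $t\to\infty$ and using the uniform convergence from Step 3 together with $\int_\Omega\varphi\,\dx x=1$ gives
\[
\int_{\overline{\Omega}} f\,\dx\rho = \int_\Omega f\varphi\,\dx x \qquad \text{for all } f\in C(\overline{\Omega}).
\]
Since $\partial\Omega$ has Lebesgue measure zero, this identifies $\dx\rho=\varphi\,\dx x$, so $P=\varphi\otimes\one$ in the sense claimed. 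The estimate from Theorem \ref{t.conv2} then becomes the desired bound $\|T_{\beta,\mu}^C(t)-\varphi\otimes\one\|\le Me^{-\eps t}$.

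The principal obstacle is Step 4: producing an \emph{$L^2$} invariant density rather than merely a strictly positive measure on $\overline{\Omega}$. This is where the absolute continuity assumption enters essentially, via the form $\mathfrak{a}_{\beta,h}$ which gives us access to the $L^2$-theory and to the adjoint semigroup; the rest of the argument is then a duality plus passage to the limit.
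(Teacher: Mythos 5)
There is a genuine gap in Step 2, and it propagates to Step 3. You claim that the local semigroup $T_{\beta,0}^C$ is irreducible on $C(\overline{\Omega})$ "by Proposition \ref{p.zug}". But Proposition \ref{p.zug} is not a free transfer of irreducibility from $L^2$ to $C(\overline{\Omega})$: it requires $s(A)=0$ and the existence of a function $u\in\ker A\cap C(\overline{\Omega})$ with $u\geq\delta>0$. For the \emph{local} Robin semigroup under the hypotheses of the theorem one has $\beta+\sum_j \tr b_j\nu_j=\int_\Omega h(\cdot,x)\,\dx x\geq 0$, which is in general not identically zero; so $T_{\beta,0}$ is sub-Markovian but not Markovian, $\one$ is not a fixed vector, and (cf.\ Theorem \ref{t.monotone} / Theorem \ref{t.sub}) one expects $s(A_{\beta,0}^C)<0$ with trivial kernel, so the hypotheses of Proposition \ref{p.zug} fail. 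Indeed the paper explicitly remarks, just before Theorem \ref{t.conv2}, that irreducibility of $T_{\beta,0}^C$ is \emph{not} easy to prove and is only known in special cases ($b_j=0$, symmetric $a_{ij}$, forthcoming work). Since Theorem \ref{t.conv2}, which you invoke in Step 3, has precisely "$T_{\beta,0}^C$ irreducible" as a hypothesis, your argument never legitimately reaches the convergence statement; Step 4, which is otherwise a reasonable duality argument to identify the invariant measure with an $L^2$ density, therefore rests on an unproved premise.

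The intended route avoids the local $C(\overline{\Omega})$-semigroup entirely, and this is exactly what the absolute continuity of $\mu(z)$ buys: apply Proposition \ref{p.irreducible} directly to the \emph{nonlocal} form $\mathfrak{a}_{\beta,h}$ (it lives on $V=H^1(\Omega)\supset H^1_0(\Omega)$, $\Omega$ is connected, and the associated semigroup $T^2_{\beta,h}$ is positive by Proposition \ref{p.optimal}(a) since $h\geq 0$), so $T^2_{\beta,h}$ is irreducible on $L^2(\Omega)$. Since the part of $-\cA^2_{\beta,h}$ in $L^\infty(\Omega)$ is $A_{\beta,\mu}$, the restriction to $C(\overline{\Omega})$ is $T^C_{\beta,\mu}$; Markovianity (your Step 1, which is correct) gives $\one\in\ker A^C_{\beta,\mu}$ and $s=0$ (consistency of the compact resolvents, \cite[Proposition 2.6]{a94}), so Proposition \ref{p.zug}, applied now to the \emph{nonlocal} semigroup with $u=\one$, yields irreducibility of $T^C_{\beta,\mu}$. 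Theorem \ref{t.asymp2} (or the explicit projection $P^C=P|_{C(\overline{\Omega})}$, $Pf=(\int_\Omega\varphi f\,\dx x)\one$ with $0\ll\varphi\in L^2(\Omega)$ appearing in the proof of Proposition \ref{p.zug}) then gives the stated convergence with $\varphi\in L^2(\Omega)$ directly, so no separate adjoint/de Pagter argument as in your Step 4 is needed. If you replace your Steps 2--3 by this argument, your proof closes; as written, it does not.
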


\section{Measures satisfying Hypothesis \ref{h.mu}}\label{s.6}

In this brief section we give some examples of maps $\mu$ for which Hypothesis \ref{h.mu} is satisfied.

\begin{example}
Assume that for every Borel set $A \subset \overline{\Omega}$
the complex-valued map $z\mapsto \mu (z)(A)$ is continuous. Then $\mu$ satisfies conditions (a), (b) and (c) in Hypothesis \ref{h.mu}.
\end{example}

\begin{proof}
It is obvious that (a) holds. As for (b), we note that by continuity and compactness of $\partial \Omega$ we have
$\sup_{z\in \partial\Omega} |\mu(z)(A)| < \infty$ for every $A \in \mathscr{B}(\overline{\Omega})$. 
Now \cite[Corollary 4.6.4]{bogachev} yields $\sup_{z\in\partial\Omega}\|\mu(z)\| < \infty$. To prove (c), pick a dense sequence $z_n$ in $\partial \Omega$. We set
\[
\tau \coloneqq \sum_{n\in \CN} \frac{1}{2^n} |\mu (z_n)|,
\]
where $|\mu (z)|$ denotes the total variation of $\mu (z)$. Then $\tau$ is a finite positive measure and we have
$\mu (z_n) \ll \tau$ for every $n\in \CN$. Let $A \in \mathscr{B}(\overline{\Omega})$ with $\tau (A) = 0$ be given. Consider the function $\varphi (z) \coloneqq \mu(z)(A)$. By the above $\varphi (z_n)=0$ for all $n\in \CN$. Moreover, $\varphi$ is continuous by assumption. Thus $\varphi \equiv 0$, proving that in fact $\mu (z) \ll \tau$ for all $z\in\partial\Omega$.
\end{proof}
   
Similarly, we can consider maps $\mu$ which only take countably many values.

\begin{example}
Assume that $\mu (z) = \sum_{n\in J} \one_{A_n}(z)\mu_j$ where $(A_n)_{j\in J} \subset \mathscr{B}(\partial \Omega)$
and $(\mu_j)_{n\in J} \subset \mathscr{M}(\overline{\Omega})$ and $J$ is a finite or countably infinite index set. Then
$\mu$ satisfies Hypothesis \ref{h.mu} provided $\sum_{n\in J} \sigma (A_n)|\mu_n|(\overline{\Omega})^p < \infty$
where $p$ is as in Hypothesis \ref{h.mu}(b).
\end{example}

\begin{proof}
Part (a) is obvious and (b) was assumed. Part (c) is fulfilled with $\tau = \sum_{n\in J} 2^{-n} |\mu_n|$.
\end{proof}

\appendix
\section{Irreducible semigroups}\label{appendix}

In this appendix we collect some known facts on positive, irreducible semigroups. In some cases we present some variations or adapt results to our special situation.

Let $E$ be a real Banach lattice. In our context $E$ will be $C(\overline{\Omega})$ or $L^q(\Omega)$. Let $T$ be a strongly continuous semigroup on $E$ which is positive, i.e.\ for $f\in E_+$ we have $T(t)f\in E_+$ for all $t\geq 0$. We denote the generator
of $T$ by $A$. The \emph{spectral bound} of $A$ is defined by
\[
s(A) \coloneqq \sup\{\Re\lambda : \lambda \in \sigma (A_{\CC})\}
\]
where $\sigma (A_{\CC})$ is the spectrum of the generator $A_\CC$ of the complexification of $T$. In what follows, we will not distinguish between an operator and its complexification. In particular, when we talk about the spectrum, resolvent, etc.\ of an operator, we always mean the spectrum/resolvent, etc.\ of its complexification.

By \cite[C-III Theorem 1.1]{schreibmaschine}, $s(A) \in \sigma (A)$ whenever $\sigma (A) \neq \emptyset$ . If $A$ has compact resolvent,
then $\sigma (A)$ consists of isolated points which are all eigenvalues.

\begin{thm}
\label{t.asympt}
Assume that $T(t)$ is compact for all $t>0$, that $s(A) = 0$ and that $T$ is bounded. Then there exist a positive projection $P\neq 0$ of finite rank,
$\eps>0$ and $M> 0$ such that
\[
\|T(t)-P\|_{\mathscr{L}(E)} \leq Me^{-\eps t}
\]
for all $t>0$.
\end{thm}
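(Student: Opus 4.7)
The plan is to exploit compactness of $T(t)$ to reduce the asymptotic analysis to a finite-dimensional spectral decomposition at the isolated spectral value $0$. First, since $T(t)$ is compact for every $t>0$, the semigroup is immediately norm continuous and the resolvent $R(\lambda,A)$ is compact: splitting $R(\lambda,A)=\int_0^\varepsilon e^{-\lambda t}T(t)\,dt+\int_\varepsilon^\infty e^{-\lambda t}T(t)\,dt$, the second summand is a norm-convergent integral of compact operators and therefore compact, while the first has operator norm at most $\varepsilon\sup_{t\le \varepsilon}\|T(t)\|$, which tends to $0$. Hence $\sigma(A)$ consists of countably many eigenvalues of finite algebraic multiplicity, each a pole of $R(\cdot,A)$, and the spectral mapping theorem $\sigma(T(t))\setminus\{0\}=e^{t\sigma(A)}$ holds. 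Since $T(1)$ is compact and $s(A)=0$ forces $\sigma(T(1))\neq\{0\}$, we have $\sigma(A)\neq\emptyset$, and the standard fact that the spectral bound of a positive semigroup belongs to the spectrum when the latter is non-empty (\cite[Proposition 3.11.2]{abhn}) gives $0\in\sigma(A)$.

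The second step is to show $\sigma(A)\cap i\mathbb{R}=\{0\}$. Here I invoke the cyclicity theorem of Greiner for positive semigroups (\cite[C-III Theorem 2.10]{schreibmaschine}): if $i\alpha\in\sigma(A)$ with $\alpha\neq 0$, then $in\alpha\in\sigma(A)$ for every $n\in\mathbb{Z}$. Compactness of the resolvent forces $\sigma(A)$ to be discrete with at most finitely many elements in any bounded set, so $\sigma(A)\cap i\mathbb{R}$ cannot contain a non-zero point. Thus the peripheral spectrum reduces to $\{0\}$.

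Let $P$ denote the spectral projection associated with the isolated point $0\in\sigma(A)$. Because $R(\lambda,A)$ is compact, $P$ has finite rank, and $P\neq 0$ since $0$ is actually in the spectrum. Positivity of $P$ follows from the representation $P=\lim_{\lambda\downarrow 0}\lambda R(\lambda,A)$ together with $R(\lambda,A)\ge 0$ for $\lambda>s(A)=0$. Boundedness of $T$ pins the pole order of $R(\cdot,A)$ at $0$ to be exactly $1$: otherwise the Laurent expansion of $R(\cdot,A)$ would produce a nilpotent term $D\neq 0$ on $PE$, and $T(t)P$ would contain a summand of the form $t^{m-1}D^{m-1}/(m-1)!$, contradicting $\sup_{t>0}\|T(t)\|<\infty$. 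Hence $A|_{PE}=0$ and consequently $T(t)P=P$ for all $t>0$.

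Finally, decompose $E=PE\oplus(I-P)E$ along the spectral projection. Both summands are $T$-invariant, and $\sigma(A|_{(I-P)E})=\sigma(A)\setminus\{0\}$. Since $\sigma(A)$ is discrete and $0$ is the only element on the imaginary axis, $\varepsilon:=-\sup\{\Re\lambda:\lambda\in\sigma(A)\setminus\{0\}\}>0$. The restricted semigroup $T(\cdot)|_{(I-P)E}$ is still immediately norm continuous, so the spectral mapping theorem gives $\omega_0(T|_{(I-P)E})=s(A|_{(I-P)E})=-\varepsilon<0$, and hence $\|T(t)(I-P)\|\le M e^{-\varepsilon t}$ for some $M>0$. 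Combining this with $T(t)P=P$ yields $T(t)-P=T(t)(I-P)$ on all of $E$, which gives the desired exponential estimate. The only non-routine ingredient is the cyclicity step used to rule out non-zero peripheral spectrum; everything else is standard spectral decomposition for eventually compact semigroups.
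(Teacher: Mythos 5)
Your overall route is the same as the paper's: isolate $0$ as the only spectral value of $A$ near the imaginary axis, take the finite-rank spectral projection $P$ at $0$, use boundedness of $T$ to kill the nilpotent part so that $T(t)P=P$, and obtain exponential decay on $(I-P)E$ from the spectral mapping theorem for immediately norm continuous semigroups. However, there is a genuine gap at the spectral-gap step, and it occurs twice. First, to exclude $i\alpha\in\sigma(A)$ with $\alpha\neq 0$, you argue that cyclicity would force $in\alpha\in\sigma(A)$ for all $n\in\CZ$, and that this contradicts the discreteness of $\sigma(A)$ coming from compactness of the resolvent ("at most finitely many elements in any bounded set"). It does not: the set $\{in\alpha : n\in\CZ\}$ is itself discrete with only finitely many points in every bounded set, so no contradiction follows from discreteness alone. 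Second, at the end you put $\eps=-\sup\{\Re\lambda:\lambda\in\sigma(A)\setminus\{0\}\}$ and claim $\eps>0$ "since $\sigma(A)$ is discrete and $0$ is the only element on the imaginary axis"; again, discreteness does not rule out a sequence $\lambda_n\in\sigma(A)$ with $\Re\lambda_n\uparrow 0$ and $|\Im\lambda_n|\to\infty$, so the strict gap does not follow as stated.

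The missing ingredient in both places is that for an eventually (here immediately) norm continuous semigroup the spectrum of the generator is bounded in every right half-plane; this is precisely what the paper imports via \cite[C-III Corollary 2.13]{schreibmaschine}, which yields $\Re\lambda\leq-2\delta$ for all $\lambda\in\sigma(A)\setminus\{0\}$ in one stroke. Alternatively, you can argue through the compactness of $T(t)$ together with the spectral mapping theorem you already quoted: $\sigma(T(t))$ is a null sequence together with $\{0\}$, so only finitely many points of $e^{t\sigma(A)}$ have modulus $\geq e^{-\delta t}$; this simultaneously makes the peripheral spectrum of $A$ finite (so cyclicity excludes any $i\alpha\neq0$) and gives the strict gap $\eps>0$. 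With that ingredient supplied, the remaining steps (finite rank of $P$, $P\neq0$, pole order $1$ from boundedness of $T$, decay on $(I-P)E$) are correct and coincide with the paper's proof; only note that your positivity argument $P=\lim_{\lambda\downarrow0}\lambda R(\lambda,A)\geq 0$ presupposes the pole order $1$, which you establish one sentence later, so the order of these two remarks should be swapped.
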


\begin{proof}
Since $T(t)$ is compact for all $t>0$, $T$ is immediately norm continuous and it follows from \cite[C-III Corollary 2.13]{schreibmaschine} that there is some $\delta >0$ such that $\Re\lambda \leq -2\delta <0$ for all $\lambda 
\in \sigma (A)\setminus \{0\}$. Denote by $P$ the spectral projection with respect to $0$, i.e.\
\[
P \coloneqq \frac{1}{2\pi i} \int_{|\lambda| = \delta} R(\lambda, A) \,\dx \lambda.
\]
As $T(t)$ is compact for all $t>0$, so is the resolvent and thus also $P$, whence it has finite rank. The restriction of $T$ to
the range of $P$ is a bounded semigroup on a finite dimensional vector space whose generator has spectrum $\{0\}$. It follows 
that the generator of the restriction is diagonalizable and is thus the zero operator.  
Consequently, $T(t)P =P$ for all $t>0$. The space $F = (I-P)E$ is invariant
under the  semigroup and the generator $A_F$ of the restriction has its spectrum in a strict left half plane. Since the semigroup is immediately norm continuous there exist $\eps>0$, $M> 0$ such that $\|T(t)|_F\|_{\mathscr{L}(F)} \leq Me^{-\eps t}$
and hence $\|T(t) - P\|_{\mathscr{L}(E)}\leq Me^{-\eps t}$ for all $t\geq 0$.
\end{proof}

Theorem \ref{t.asympt} implies in particular that there exists $u>0$, i.e.\ $u\geq 0$ and $u\neq 0$, such that $T(t) u = u$ for all $t \geq 0$. Thus the Krein--Rutman Theorem which asserts that the largest eigenvalue (i.e.\ $s(A)$) has a positive eigenfunction is incorporated in Theorem \ref{t.asympt}. 

We next want to investigate when $P$ has rank one and the positive eigenfunction is strictly positive. This will be done via the notion
of \emph{irreducibility}. A subspace $J$ of $E$ is called an \emph{ideal} if
\begin{enumerate}
[(i)]
\item $u\in J$ implies $|u|\in J$ and
\item if $u \in J$, then $0\leq v \leq u$ implies $v \in J$.
\end{enumerate}
A positive, strongly continuous semigroup $T$ on $E$ is called \emph{irreducible} if the only invariant closed ideals are $J = \{0\}$
and $J=E$.

If $J=C(\overline{\Omega})$ then $J\subset E$ is a closed ideal if and only if there exists a closed subset $K$ of $\overline{\Omega}$
such that
\[
J = \{ f\in C(\overline{\Omega}) : f|_K = 0\}.
\]
If $E = L^q(\Omega)$ $(1\leq q <\infty)$ then $J\subset E$ is a closed ideal if and only if there exists a measurable subset $K$ of $\Omega$ such that 
\[
J = \{ f\in L^q(\Omega) : f|_K = 0 \mbox{ a.e.}\}.
\]

We say that  $u\in E$ is a \emph{quasi interior point} and write  $u\gg 0$ if the principal ideal
\[
E_u \coloneqq \{ v\in E : \exists \, c>0 \mbox{ such that } |v|\leq cu\}
\]
is dense in $E$.

If $E= C(\overline{\Omega})$ then $u\gg 0$ if and only if there is $\delta >0$ such that $u(x)\geq \delta >0$ for all $x\in \overline{\Omega}$.
In this case $u$ is actually an inner point of the positive cone. If $E= L^p(\Omega)$ then $u\gg 0$ if and only if $u(x) >0$ for almost every $x$.

We call  $\varphi \in E^\prime$  a \emph{strictly positive functional}  if $\langle \varphi, f\rangle =0$ implies $f=0$ for all $f \in E_+$. 

If $E= C(\overline{\Omega})$, then $\varphi$ is strictly positive if and only if there exists a strictly positive Borel measure $\nu$, i.e.\
$\nu (O) >0$ for all non-empty open sets $O\subset \overline{\Omega}$, such that
\[
\langle \varphi, f\rangle = \int_{\overline{\Omega}} f(x)\, \dx \nu (x).
\]
If $E= L^q(\Omega)$ for $\varphi \in L^{q'}(\Omega) \simeq (L^q(\Omega))^\prime$ to be strictly positive is equivalent to
that $\varphi (x) > 0$ almost everywhere, i.e.\  $\varphi \gg 0$.\medskip

The importance of these concepts in the study of asymptotic behavior stems from the fact that positive fixed points of positive, irreducible semigroups are \emph{strictly positive}. More precisely, if $T$ is a positive, irreducible, strongly continuous semigroup
and $u>0$ is such that $T(t) u = u$ for all $t>0$, then $u\gg 0$ and if $0<\varphi \in E^\prime$ is such that
$T(t)^\prime \varphi = \varphi$ for all $t>0$ then $\varphi$ is strictly positive. 
Moreover, because of irreducibility, $s(A)$ cannot be a pole of order larger than 1, see \cite[C-III Proposition 3.5]{schreibmaschine}.
This implies that $T(t)P = P$ for all $t>0$ in the proof of Theorem \ref{t.asympt} even though the semigroup is not assumed to be bounded. We thus obtain the following result on asymptotic stability.

\begin{thm}
\label{t.asymp2}
Let $T$ be a positive, irreducible strongly continuous semigroup on $E$ with generator $A$. Assume that $T(t)$ is compact for $t>0$ and $s(A) = 0$. Then there exist $0\ll u \in \ker A$, a strictly positive $\varphi \in \ker A^\prime$, $\eps >0$, $M> 0$ such that
$\langle \varphi , u \rangle = 1$ and
\[
\|T(t) - \varphi\otimes u\|_{\mathscr{L}(E)} \leq M^{-\eps t}
\]
for all $t\geq 0$ where we have written $\varphi \otimes u$ for the projection defined by
\[
(\varphi\otimes u)(f) = \langle \varphi , f\rangle u,
\]
for all $f \in E$.
In particular
\[
\lim_{t\to \infty} T(t)f = \langle \varphi, f\rangle u,
\]
i.e.\ the orbits of the semigroup converge to an equilibrium.
\end{thm}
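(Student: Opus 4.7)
The plan is to deduce Theorem~\ref{t.asymp2} by the same strategy as Theorem~\ref{t.asympt}, but replacing the hypothesis that $T$ be bounded with irreducibility and invoking the Perron--Frobenius type results recalled just before the theorem to identify the spectral projection at $0$ explicitly as a rank-one operator $\varphi\otimes u$.

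\emph{Step 1 (spectral gap).} Compactness of $T(t)$ for $t>0$ makes $T$ immediately norm continuous and gives a compact resolvent, so $\sigma(A)$ consists of isolated eigenvalues. Together with $s(A)=0$, \cite[C-III Corollary 2.13]{schreibmaschine} supplies some $\delta>0$ with $\sigma(A)\cap\{\Re\lambda>-2\delta\}=\{0\}$. Let $P$ be the spectral projection at $0$; it has finite rank, commutes with $T(t)$, and decomposes $E = PE\oplus (I-P)E$ into two $T$-invariant closed subspaces.

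\emph{Step 2 (identifying $P$ via irreducibility).} Because $T$ is irreducible and $0$ is a pole of the resolvent, \cite[C-III Proposition 3.5]{schreibmaschine} ensures that $0$ is a pole of order exactly one; hence the range of $P$ equals $\ker A$ and the generator of $T|_{PE}$ is zero, so $T(t)P=P$ for all $t>0$ (this is the remark recorded just before the statement). Applying the Perron--Frobenius principle for irreducible positive semigroups with a pole at the spectral bound, $\ker A$ is one-dimensional and spanned by a quasi-interior point $u\gg 0$. The same reasoning for the dual semigroup $T'$ on $E'$, which is also positive with compact restriction to the finite-dimensional fixed space and inherits irreducibility in the sense that its positive fixed vectors are strictly positive (by the fact recalled before the theorem), yields $0<\varphi\in\ker A'$ with $\varphi$ strictly positive. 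Normalising so that $\langle\varphi,u\rangle=1$, we obtain $P=\varphi\otimes u$.

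\emph{Step 3 (exponential decay on $(I-P)E$).} Set $F\coloneqq (I-P)E$, a closed $T$-invariant subspace, and let $A_F$ denote the generator of the restriction. By Step~1 we have $s(A_F)\leq -2\delta<0$; since the restricted semigroup is still immediately norm continuous, spectral bound equals growth bound on $F$, so there exist $M>0$ and $\eps\in(0,2\delta)$ with $\|T(t)(I-P)\|_{\mathscr{L}(E)}\leq Me^{-\eps t}$ for all $t>0$. Combined with $T(t)P=P$ this yields $\|T(t)-\varphi\otimes u\|_{\mathscr{L}(E)}=\|T(t)(I-P)\|_{\mathscr{L}(E)}\leq Me^{-\eps t}$; the pointwise convergence statement is then immediate.

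The only non-routine point is Step~2: given positivity, irreducibility and the fact that $0$ is a pole, one must extract that the fixed space is one-dimensional and spanned by a quasi-interior vector (and likewise for the adjoint). This is precisely the content of the Perron--Frobenius theory for irreducible positive semigroups developed in \cite[C-III]{schreibmaschine}; everything else is a direct re-use of the argument in the proof of Theorem~\ref{t.asympt}, with the boundedness hypothesis there replaced by the simplicity of the pole forced by irreducibility.
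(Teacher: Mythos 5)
Your proposal is correct and follows essentially the same route as the paper: the paper proves Theorem \ref{t.asymp2} by re-running the spectral-gap argument of Theorem \ref{t.asympt}, replacing boundedness by the fact (from irreducibility, via \cite[C-III Proposition 3.5]{schreibmaschine}) that $s(A)=0$ is a first-order pole whose residue is a rank-one projection $\varphi\otimes u$ with $u\gg 0$ and $\varphi$ strictly positive, exactly as in your Steps 1--3. The only cosmetic difference is that you phrase the identification of $\varphi$ via the dual semigroup, whereas the paper gets both $u\gg 0$ and the strict positivity of $\varphi$ directly from the cited Perron--Frobenius result and the remark on fixed vectors preceding the theorem.
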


Theorems \ref{t.asympt} and \ref{t.asymp2} lie at the heart of the Perron--Frobenius theory. We refer to \cite{schreibmaschine} for more information.\medskip

We shall have occasion to use the strict monotonicity of the spectral bound.

\begin{thm}\label{t.monotone}
Let $S$ and $T$ be strongly continuous semigroups on $E$ with generators $B$ and $A$ respectively. Assume that
\begin{enumerate}
[(i)]
\item $0\leq S(t) \leq T(t)$ for all $t>0$;
\item $A$ has compact resolvent, and
\item $T$ is irreducible.
\end{enumerate}
If $A\neq B$, then $s(B) < s(A)$.
\end{thm}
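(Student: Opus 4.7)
The plan is a proof by contradiction: assume $s(B) = s(A) =: s$, and show that this forces $R(\lambda, A) = R(\lambda, B)$ for $\lambda$ large, contradicting $A \ne B$. First I would take Laplace transforms of the inequality $0 \le S(t) \le T(t)$ to obtain $0 \le R(\lambda, B) \le R(\lambda, A)$ for every $\lambda > s$; in particular $s(B) \le s(A)$. After translating by $-sI$, we may assume $s = 0$.

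The irreducibility of $T$ together with compactness of $R(\lambda, A)$ puts us in the classical Krein--Rutman/Perron--Frobenius setting (cf.\ the discussion preceding Theorem \ref{t.asymp2}): $0 = s(A)$ is a first--order pole of $R(\cdot, A)$ whose residue is a rank--one projection $u \otimes \varphi$, with $u \gg 0$ a quasi--interior Perron eigenvector ($Au = 0$) and $\varphi \in E'$ a strictly positive eigenfunctional ($A^*\varphi = 0$), normalized so that $\langle \varphi, u\rangle = 1$. The first non--trivial step is to obtain an analogue for $B$: a positive eigenvector $v$ with $Bv = 0$. Here I would invoke the Aliprantis--Burkinshaw theorem (or Dodds--Fremlin when $E$ has order continuous norm) to pass compactness from $R(\lambda, A)$ to some iterate $R(\lambda, B)^k$ via the domination $0 \le R(\lambda, B) \le R(\lambda, A)$. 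Since $s(B) \in \sigma(B)$ for positive semigroups, spectral mapping identifies $1/\lambda$ as a pole of the resolvent of $R(\lambda, B)$, and the Krein--Rutman theorem for positive operators at a pole of the resolvent produces $v \ge 0$, $v \ne 0$, with $R(\lambda, B) v = \lambda^{-1} v$, i.e.\ $Bv = 0$.

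The conclusion is drawn by two complementary applications of strict positivity. From $R(\lambda, A) v - R(\lambda, B) v \ge 0$ and $\langle \varphi, R(\lambda, A) v\rangle = \lambda^{-1}\langle \varphi, v\rangle = \langle \varphi, R(\lambda, B) v\rangle$, the strict positivity of $\varphi$ forces $R(\lambda, A) v = R(\lambda, B) v = \lambda^{-1} v$; hence $Av = 0$, and by one--dimensionality of the Perron eigenspace $v = c u$ for some $c > 0$, so $v \gg 0$ and $Bu = 0$. Dually, $\lambda^{-1}\varphi - R(\lambda, B)^*\varphi \ge 0$ is a positive functional whose pairing with the quasi--interior point $v$ vanishes and is therefore zero; equivalently, $B^*\varphi = 0$. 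With both $Bu = 0$ and $B^*\varphi = 0$ in hand, for arbitrary $f \ge 0$
\[
\langle \varphi, (R(\lambda, A) - R(\lambda, B))f\rangle = \lambda^{-1}\langle\varphi, f\rangle - \lambda^{-1}\langle\varphi, f\rangle = 0,
\]
and since $(R(\lambda, A) - R(\lambda, B))f \ge 0$ with $\varphi$ strictly positive, $R(\lambda, A) f = R(\lambda, B) f$; linearity then gives $R(\lambda, A) = R(\lambda, B)$ and therefore $A = B$, a contradiction.

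The main obstacle is producing the positive eigenvector $v$ of $B$ at $s(B)$: \emph{a priori} one only knows $s(B) \in \sigma(B)$, and one must invoke the domination--preserves--compactness machinery of Aliprantis--Burkinshaw (or Dodds--Fremlin) before Krein--Rutman can be applied. Once $v$ is in hand, the two strict--positivity cancellations (first in $E$ on $v$, then in $E'$ on $\varphi$) conclude the argument in essentially formal fashion.
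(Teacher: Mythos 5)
Your argument is correct, but note that the paper itself does not prove Theorem \ref{t.monotone}: it simply cites \cite[Theorem 1.3]{ab92} (and \cite{A05}), so what you have produced is a self-contained reconstruction of that domination--ergodicity argument rather than a parallel to an in-text proof. The skeleton is sound: after reducing to $s(B)=s(A)=0$, irreducibility of $T$ plus compactness of $R(\lambda,A)$ gives the first-order pole at $0$ with rank-one residue $u\otimes\varphi$, $u\gg 0$, $\varphi$ strictly positive; the Aliprantis--Burkinshaw cube theorem applied to $0\le R(\lambda,B)\le R(\lambda,A)$ makes $R(\lambda,B)^3$ compact, so $r(R(\lambda,B))=1/\lambda$ is a pole of its resolvent and the positivity of the leading Laurent coefficient yields the eigenvector $v>0$ with $Bv=0$; and the two strict-positivity cancellations (first against $\varphi$ in $E$, then against the quasi-interior point $v$ in $E'$) do force $R(\lambda,A)f=R(\lambda,B)f$ for all $f\ge 0$, hence $A=B$ since equality of resolvents at a single $\lambda$ suffices. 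Two small points you leave implicit and should state: (1) the normalization $s=0$ presupposes $s(A)>-\infty$; this is not part of the hypotheses but follows from (ii) and (iii) via de Pagter's theorem \cite{dp86} applied to the positive, irreducible, compact operator $R(\lambda,A)$ --- exactly the argument the paper uses in Proposition \ref{p.nonempty} --- and without it the case $\sigma(A)=\emptyset$ would derail the Perron--Frobenius step; (2) the identities you write as $A^*\varphi=0$ and $B^*\varphi=0$ are best formulated directly as $R(\lambda,A)'\varphi=\lambda^{-1}\varphi$ and $R(\lambda,B)'\varphi=\lambda^{-1}\varphi$, since on $E=C(\overline{\Omega})$ the adjoint semigroup is not strongly continuous; this is anyway all your computation uses, as the first identity follows from $P=u\otimes\varphi$ commuting with $R(\lambda,A)$ and $R(\lambda,A)P=\lambda^{-1}P$. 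Your route to the positive eigenvector of $B$ via power-compactness is a legitimate alternative to the construction in \cite{ab92}, at the modest price of invoking the Aliprantis--Burkinshaw machinery; what it buys is that you never need any further structure on $S$ beyond the domination (i).
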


\begin{proof}
This is a version of \cite[Theorem 1.3]{ab92}, see also \cite[Theorem 10.2.10]{A05} in connection with \cite[Theorems 10.6.3 and 10.6.1]{A05}.
\end{proof}

Next we describe ways to prove irreducibility. On $L^2(\Omega)$ this is very easy if the semigroup is associated with 
a form by virtue of the Beurling--Deny--Ouhabaz criterion for the invariance  of closed convex sets. In particular 
the following holds true (see \cite[Theorem 2.10]{ouh05}).

\begin{prop}\label{p.irreducible}
Let $V \subset H^1(\Omega)$ be a closed subspace containing $H^1_0(\Omega)$, where $\Omega \subset \CR^d$ is a connected, open set. Let $\mathfrak{a}: V\times V \to \CR$ be a continuous and elliptic form such that the associated semigroup $T$ is positive. Then
$T$ is irreducible.
\end{prop}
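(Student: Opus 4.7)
The plan is to classify the closed $T$-invariant ideals of $L^2(\Omega)$ using Ouhabaz's form-invariance criterion and then invoke connectedness of $\Omega$ to rule out all non-trivial ones.

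First, I would recall that every closed ideal $J\subset L^2(\Omega)$ has the form $J_A=\{f\in L^2(\Omega):f=0\text{ a.e.\ on }\Omega\setminus A\}$ for some measurable $A\subset\Omega$, and the orthogonal projection onto $J_A$ is multiplication by the indicator $\one_A$. The form $\mathfrak{a}$ is continuous and elliptic on $V$, so the associated semigroup $T$ falls under the scope of Ouhabaz's invariance criterion (\cite[Theorem 2.2]{ouh05}). Applying the ``only if'' direction of that criterion to the closed convex set $J_A$, the hypothesis that $J_A$ is $T$-invariant forces $\one_A u\in V$ for every $u\in V$.

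Since $H^1_0(\Omega)\subset V$ by assumption, this applies in particular to every $u\in C_c^\infty(\Omega)$, giving $\one_A u\in V\subset H^1(\Omega)$. Now fix any open ball $B$ with $\overline{B}\subset\Omega$ and choose $u\in C_c^\infty(\Omega)$ equal to $1$ on $B$; then $\one_A|_B=(\one_A u)|_B\in H^1(B)$. Because $\one_A$ takes only the values $0$ and $1$, a standard application of the Sobolev chain rule (or the Stampacchia property $\nabla v=0$ a.e.\ on the set where $v$ is constant) yields $\nabla\one_A=0$ a.e.\ on $B$. Hence $\one_A$ is a.e.\ constant on $B$, so either $|A\cap B|=0$ or $|B\setminus A|=0$.

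To conclude, I would introduce the open sets
\[
\Omega_0\coloneqq\{x\in\Omega:\exists\,B\ni x,\ \overline{B}\subset\Omega,\ |A\cap B|=0\},\quad
\Omega_1\coloneqq\{x\in\Omega:\exists\,B\ni x,\ \overline{B}\subset\Omega,\ |B\setminus A|=0\}.
\]
They are disjoint and, by the previous paragraph, cover $\Omega$. Connectedness of $\Omega$ then forces one of them to be empty, i.e.\ $|A|=0$ (so $J_A=\{0\}$) or $|\Omega\setminus A|=0$ (so $J_A=L^2(\Omega)$). This proves that $T$ is irreducible.

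The step most in need of attention is the invocation of Ouhabaz's criterion for ideals: one must make sure that invariance of $J_A$ really forces the projection $u\mapsto \one_A u$ to stabilize $V$, which is the ``only if'' half of \cite[Theorem 2.2]{ouh05}. The $H^1$-constancy step is a standard fact about $\{0,1\}$-valued Sobolev functions, and the topological finish is immediate from connectedness.
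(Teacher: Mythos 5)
Your proof is correct and is essentially the same argument the paper relies on: the paper establishes this proposition simply by citing Ouhabaz's Theorem 2.10, whose proof runs exactly as yours — closed ideals are of the form $J_A$, the invariance criterion forces $\one_A V\subset V$, hence $\one_A\in H^1_{\mathrm{loc}}(\Omega)$ with vanishing gradient, and connectedness of $\Omega$ finishes the argument. The only point worth making explicit is that $\mathfrak{a}$ is merely elliptic rather than accretive, so the criterion is applied to $\mathfrak{a}+\omega$ and the rescaled semigroup $e^{-\omega t}T(t)$; since $J_A$ is a subspace, invariance under $T$ and under the rescaled semigroup coincide, so nothing is lost.
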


On $C(\overline{\Omega})$ irreducibility is a stronger notion than on $L^2(\Omega)$. However, the following result shows how irreducibility on $C(\overline{\Omega})$ can be deduced from irreducibility on $L^2(\Omega)$.

\begin{prop}\label{p.zug}
Let $\Omega \subset \CR^d$ be open and bounded and $T$ be a positive, irreducible, strongly continuous semigroup 
on $L^2(\Omega)$ whose generator $A$ has compact resolvent. Assume that $T$ leaves $C(\overline{\Omega})$ invariant and that the restriction $T^C$ of $T$ to $C(\overline{\Omega})$ is strongly continuous and suppose that its generator $A^C$ has 
compact resolvent. Assume that
$s(A) =0$. Then $T^C$ is irreducible if and only if there exists $u\in \ker A\cap C(\overline{\Omega})$ such that $u(x) \geq \delta >0$
for all $x\in \overline{\Omega}$.
\end{prop}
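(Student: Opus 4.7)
The forward direction is an application of Krein--Rutman on $C(\overline{\Omega})$. Since $T$ leaves $C(\overline{\Omega})$ invariant and $T^C$ is strongly continuous, $R(\lambda, A)$ and $R(\lambda, A^C)$ are consistent on $C(\overline{\Omega})$; both being compact, \cite[Proposition 2.6]{a94} yields $\sigma(A) = \sigma(A^C)$ and thus $s(A^C) = 0$. With $T^C$ positive and irreducible and $A^C$ having compact resolvent, the standard Perron--Frobenius theorem (e.g.\ \cite[C-III Proposition 3.5]{schreibmaschine}) produces a quasi-interior $u \in \ker A^C$, which in $C(\overline{\Omega})$ is precisely a function bounded below by some $\delta > 0$.

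For the converse, assume $u \in \ker A \cap C(\overline{\Omega})$ satisfies $u(x) \geq \delta > 0$ on $\overline{\Omega}$ and suppose, for contradiction, that $J = \{f \in C(\overline{\Omega}) : f|_K = 0\}$ is a closed $T^C$-invariant ideal with $\emptyset \neq K \subsetneq \overline{\Omega}$. The plan is to identify the one-dimensional spaces $\ker A^C$ and $\ker(A^C)^*$ explicitly, construct the rank-one spectral projection $P_0$ at $0$, and exploit the fact that $P_0$ must preserve $J$. First, $T^C$ is uniformly bounded: the pointwise estimate $|f| \leq (\|f\|_\infty/\delta)\, u$ together with positivity of $T^C$ and $T^C(t) u = u$ gives $|T^C(t)f| \leq (\|f\|_\infty/\delta)\, u$, so $\|T^C(t)\|_{\mathscr{L}(C(\overline{\Omega}))} \leq \|u\|_\infty/\delta$ uniformly in $t$. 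Second, $L^2$-irreducibility of $T$ makes $\ker A \subset L^2(\Omega)$ one-dimensional, and since $\ker A^C = \ker A \cap C(\overline{\Omega})$ contains the nonzero element $u$, we obtain $\ker A^C = \operatorname{span}(u)$; Fredholm theory applied to $\lambda - A^C$ (legitimate because $R(\lambda, A^C)$ is compact) yields $\dim \ker (A^C)^* = \dim \ker A^C = 1$.

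The dual Perron eigenvector of $T$ on $L^2(\Omega)$ provides $v_0 \in L^2(\Omega)$ with $A^* v_0 = 0$ and $v_0 > 0$ almost everywhere. Define $\nu_0 \in C(\overline{\Omega})^*$ by $\nu_0(f) \coloneqq \int_{\Omega} f\, v_0 \dx x$. Then $\nu_0 \geq 0$, lies in $\ker (A^C)^*$ (since $\langle A^C f, \nu_0 \rangle = \langle f, A^* v_0 \rangle_{L^2} = 0$ for $f \in D(A^C)$), and $\nu_0(u) = \int_\Omega u v_0 \dx x > 0$; hence $\ker (A^C)^* = \operatorname{span}(\nu_0)$. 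The functional $\nu_0$ is strictly positive on $C(\overline{\Omega})$: any nonempty open $U \subset \overline{\Omega}$ meets $\Omega$ (since $\Omega$ is dense in $\overline{\Omega}$), so for every $0 \neq f \geq 0$ in $C(\overline{\Omega})$ the open set $\{f > 0\}$ intersects $\Omega$ in a set of positive Lebesgue measure on which $v_0 > 0$, giving $\nu_0(f) > 0$.

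Boundedness of $T^C$ forces the isolated eigenvalue $0$ of $A^C$ (isolated because $R(\cdot, A^C)$ is compact) to be a simple pole of the resolvent, as higher-order poles would yield polynomial growth of $T^C$ on the generalized eigenspace. The associated spectral projection $P_0$ therefore has range $\ker A^C = \operatorname{span}(u)$; writing $P_0 f = \alpha(f) u$, the relation $P_0 T^C(t) = T^C(t) P_0 = P_0$ makes $\alpha$ a fixed functional with $\alpha(u) = 1$, whence $\alpha = \nu_0/\nu_0(u)$ and $P_0 f = \nu_0(f)\, u / \nu_0(u)$. Moreover, $\lambda R(\lambda, A^C) \to P_0$ as $\lambda \downarrow 0$, directly from the Laurent expansion at the simple pole. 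Now for $f \in J$, the Bochner integral $R(\lambda, A^C) f = \int_0^\infty e^{-\lambda t} T^C(t) f \dx t$ lies in the closed subspace $J$ because the integrand does; hence $\lambda R(\lambda, A^C) f \in J$ and in the limit $P_0 f \in J$. Since $u \notin J$ (because $u|_K \geq \delta > 0$), this forces $\nu_0(f) = 0$ for every $f \in J$, contradicting the strict positivity of $\nu_0$ applied to any $0 \neq f \geq 0$ in $J$. The main delicate point is the simple-pole claim, which I settle using boundedness of $T^C$; alternatively one may bypass it by invoking mean ergodicity of bounded semigroups with compact resolvent to obtain $P_0$ directly as the Abel limit $\lim_{\lambda \downarrow 0} \lambda R(\lambda, A^C)$.
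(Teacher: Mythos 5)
Your proof is correct, and the necessity direction (irreducible $\Rightarrow$ existence of $u$) is essentially the paper's: consistent compact resolvents give $\sigma(A)=\sigma(A^C)$, hence $s(A^C)=0$ is a pole, and Perron--Frobenius for irreducible semigroups (\cite[C-III Proposition 3.5]{schreibmaschine}) yields a quasi-interior fixed vector, which in $C(\overline{\Omega})$ means $u\geq\delta>0$. For the sufficiency direction the two arguments share the same engine -- approximate a rank-one spectral projection at $0$ by $\lambda R(\lambda,A^C)$ as $\lambda\downarrow 0$, feed it an element of an invariant ideal $J$, and use strict positivity of the dual $L^2$-eigenvector to conclude $J=\{0\}$ -- but they obtain the pole structure on $C(\overline{\Omega})$ differently. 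The paper applies \cite[C-III Proposition 3.5]{schreibmaschine} on $L^2(\Omega)$ to get a first-order pole with residue $(\int\varphi\,f)\,u$, and then transfers the whole Laurent expansion to $C(\overline{\Omega})$ by consistency and density, so that no separate analysis on $C(\overline{\Omega})$ is needed. You instead work intrinsically on $C(\overline{\Omega})$: the nice observation that $|f|\leq(\|f\|_\infty/\delta)u$ and $T^C(t)u=u$ force uniform boundedness of $T^C$, which rules out a higher-order pole, and you identify the residue as $\nu_0\otimes u/\nu_0(u)$ via the $L^2$ dual Perron eigenvector together with a Fredholm dimension count for the compact resolvent. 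Your route costs a bit of extra machinery (the Fredholm step and the identification of fixed functionals with $\ker(A^C)^*$) that the paper's transfer argument avoids, but it buys a self-contained description of the spectral projection on $C(\overline{\Omega})$ and makes the role of the hypothesis $u\geq\delta$ transparent (it is exactly what bounds $T^C$). One small point of precision: one-dimensionality of $\ker A$ does not follow from irreducibility of $T$ alone; it uses in addition that $s(A)=0$ is a pole of the resolvent (guaranteed here by compactness), again via \cite[C-III Proposition 3.5]{schreibmaschine} -- this is how you should justify that step, and with it the argument is complete.
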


\begin{proof}
Assume that there exists $0\ll u \in C(\overline{\Omega})\cap \ker A$. Since $T$ is irreducible $0$ is a pole of order 1 and the residuum
$P$ is of the form
\[
Pf = \Big(\int_\Omega \varphi f\,\dx x\Big)\cdot u
\]
for some $0\ll \varphi \in L^2(\Omega)$, see \cite[C-III Proposition 3.5]{schreibmaschine}. Since $C(\overline{\Omega})$ is dense in $L^2(\Omega)$, it follows that the coefficients in the Laurent series expansion in $C(\overline{\Omega})$ around $0$ (see \cite[A-III, Equation (3.1)]{schreibmaschine}) are the restriction of those in $L^2(\Omega)$. 
Thus $0$ is also a pole of order 1 of the resolvent of $A^C$. The residuum
\[ P^C = \frac{1}{2\pi i} \int_{|\lambda|= \eps} R(\lambda, A^C)\,\dx \lambda \]
is the same, i.e.\ $P^C= P|_{C(\overline{\Omega})}$.
Now let $J = \{f \in C(\overline{\Omega}) : f|_K = 0\}$ be an invariant ideal. Then for $z \in K$, $f \in J$, $f\geq 0$ we have 
$(T(t)f)(z) =0$ for all $t>0$ and hence $(R(\lambda, A^C)f)(z)=0$ for all $\lambda >0$, since we suppose
that $s(A)=0$ and know that $s(A)$ is the abscissis of the Laplace transform of the semigroup \cite[Theorem 5.3.1]{abhn}. Thus
\[
\int_{\Omega} f(x)\varphi (x)\dx x\cdot u(z) = \lim_{\lambda\downarrow 0} (\lambda R(\lambda, A^C)f)(z) = 0.
\]
Since $\varphi \gg 0$ in $L^2(\Omega)$ this implies $f= 0$. Consequently $J = \{0\}$. This proves the sufficiency.

To show the necessity, recall that $0$ is also a pole of $R(\lambda, A^C)$. It follows that $s(A^C) = 0$. By Theorem \ref{t.asymp2}, 
there exists $0\ll u \in \ker (A^C) \subset \ker (A)$.
\end{proof}

\section*{Acknowledgement}

The authors are grateful to Jochen Gl\"uck for discussions  on a train from Ulm to Munich which led to Proposition \ref{p.zug}.

\end{document}